\newcounter{savesection}
\newcounter{apdxsection}
\renewcommand\appendix{\par
  \setcounter{savesection}{\value{section}}%
  \setcounter{section}{\value{apdxsection}}%
  \setcounter{subsection}{0}%
  \gdef\thesection{\@Alph\c@section}}
\newcommand\unappendix{\par
  \setcounter{apdxsection}{\value{section}}%
  \setcounter{section}{\value{savesection}}%
  \setcounter{subsection}{0}%
  \gdef\thesection{\@arabic\c@section}}
\newcommand*{\bigtwo}[1]{\vcenter{\hbox{\scalebox{1.4}{\ensuremath#1}}}}
\newcommand{\shortminus}{\scalebox{0.75}[1.0]{\( - \)}\mkern 1 mu}
\newcommand{\tminusr}{\langle t\shortminus r\rangle}
\newcommand{\tplusr}{\langle{}_{\,} t\!{}_{\!}+\! r\rangle}
\newcommand{\nang}{{\not\negmedspace\nabla\! }}
\DeclareFontFamily{U}{mathx}{\hyphenchar\font45}
\DeclareFontShape{U}{mathx}{m}{n}{<5> <6> <7> <8> <9> <10>
      <10.95> <12> <14.4> <17.28> <20.74> <24.88> mathx10}{}
\DeclareSymbolFont{mathx}{U}{mathx}{m}{n}
\DeclareMathAccent{\widecheck}{0}{mathx}{"71}
\DeclareMathAccent{\wideparen}{0}{mathx}{"75}
\title{Scattering for wave equations with sources close to the lightcone and prescribed radiation fields}
\author{Hans Lindblad and Volker Schlue}
\numberwithin{equation}{section}
\newcommand{\ud}{\mathrm{d}}
\newcommand{\beq}{\begin{equation}}\newcommand{\eq}{\end{equation}}
\newcommand{\beqs}{\begin{equation*}}\newcommand{\eqs}{\end{equation*}}
\def\pa{\partial}
\newcommand{\les}{\lesssim}
\theoremstyle{plain}
\newtheorem{prop}{Proposition}[section]
\newtheorem{lemma}[prop]{Lemma}
\newtheorem{remark}{Remark}[section]
\newtheorem{theorem}{Theorem}[section]
\newtheorem{cor}[theorem]{Corollary}
\begin{document}

\maketitle

\begin{abstract}
  We construct  solutions with prescribed radiation fields for wave equations with polynomially decaying sources close to the lightcone.
  In this setting, which is motivated by semi-linear wave equations satisfying the weak null condition, solutions to the forward problem have a logarithmic leading order term on the lightcone and non-trivial homogeneous asymptotics in the interior of the lightcone.
  The backward scattering solutions we construct  are given to second order by explicit asymptotic solutions in the wave zone, and in the interior of the light cone which satisfy novel matching conditions.
  In the process we find novel compatibility conditions for the scattering data at null infinity.
  We also relate the asymptotics of the radiation field towards space-like infinity to explicit homogeneous solutions in the exterior of the light cone.
  This is the setting of slowly polynomially decaying data corresponding to mass, charge and angular momentum in applications.
  We show that homogeneous data of degree $-1$ and $-2$   for the wave equation results in the same logarithmic terms on the lightcone  and homogeneous asymptotics in the interior as for the equations with sources close to the lightcone. The proof requires a delicate analysis of the forward solution close to the light cone and uses the invertibility  of the Funk transform.
\end{abstract}

\bigskip
\tableofcontents

\mathtoolsset{showonlyrefs=true}

\section{Introduction}

For the classical wave equation on $\mathbb{R}^{3+1}$,
\begin{equation}
  \label{eq:wave:F}
  \Box \phi =F\,,\qquad \Box=-\pa_t^2+\triangle_x,
\end{equation}
it is well-known that solutions to the Cauchy problem have a radiation field $\mathcal{F}_0$, provided the source $F$ and the initial data decay sufficiently fast \cite{F62,F80,H97}. In other words, in the \emph{wave zone} where $r\sim t$, the forward in time solution arising from fast decaying data takes the asymptotic form
\begin{equation}
  \label{eq:wavezone:F}
  \phi(t,r\omega)\sim \frac{\mathcal{F}_0(r-t,\omega)}{r},\qquad (r\sim t,r\to \infty),
\end{equation}
and the radiation field decays\footnote{See Section~\ref{sec:radiationfield} for further discussion. For compactly supported sources and data, the radiation field $\mathcal{F}_0(q,\omega)$ is compactly supported in $q$ as a result of the strong Huygens principle. This paper is partly motivated  by non-linear wave equations for which strong Huygens \emph{fails}; see Section~\ref{sec:motiv}. This support property also fails in \emph{curved} spacetimes \cite{F75}, and is replaced by late time tails associated to the relevant interior solutions; see also \cite{BVW18}, and in particular the recent \cite{LO24}.} both in the exterior as $r-t\to\infty$, and in the interior as $r-t\to-\infty$,
\begin{equation}
  \label{eq:81}
  \lim_{q\to\pm\infty}\mathcal{F}_0(q,\omega)=0\,.
\end{equation}

This is \emph{not} true for sources which are slowly decaying in the wave zone:
For example,  we know from the work of the first author \cite{L90a,L17} --- and this will be discussed in Sections~\ref{sec:hom:int} \& \ref{sec:story} --- that for
\begin{equation}
  \label{eq:F:slow}
  F(t,r\omega)\sim \frac{n(r-t,\omega)}{r^2}\qquad (r\sim t,r\to\infty),
\end{equation}
in the wave zone,
the asymptotic form of the forward solution to \eqref{eq:wave:F} with trivial data is
\begin{equation}
  \label{eq:wavezone:log}
  \phi(t,r\omega)\sim \frac{1}{r}\ln\frac{2r}{\langle t-r\rangle} \mathcal{F}_{01}(r-t,\omega)+\frac{1}{r}\mathcal{F}_0(r-t,\omega), \qquad (r\sim t,r\to\infty),
\end{equation}
where $\mathcal{F}_{01}$ and $\mathcal{F}_0$ do not decay in the interior\footnote{The ``interior'' refers plainly to the interior of the light cone in Minkowski space, namely the set $|x|<t$. However, when a limit $r\to\infty$ along $(q=r-t,\omega)$ fixed is taken first, and quantities such as radiation fields are discussed, then the limit $q\to -\infty$ ``towards the interior'', can be viewed -- in the language of relativity -- as a limit \emph{along null infinity, towards time-like infinity}; cf.~Figure~\ref{fig:limits}. Similarly, for the ``exterior'' of the lightcone, where $q\to\infty$ corresponds to a limit towards \emph{space-like infinity}.}
\begin{equation}
  \label{eq:interior:F:limits}
  \lim_{q\to-\infty}\mathcal{F}_{01}(q,\omega)=N_{01}(\omega)\,,\qquad  \lim_{q\to-\infty}\mathcal{F}_0(q,\omega)=N_0(\omega)\,.
\end{equation}
In fact, forward solutions to \eqref{eq:wave:F} with  sources satisfying \eqref{eq:F:slow} display \emph{homogeneous} asymptotics in the interior of the light cone,\footnote{See Section~\ref{sec:hom:int} for discussion. In the interior of the lightcone, the relevant asymptotic regime can be identified with the space of time-like rays $\gamma_y(t):t\mapsto (t,yt)$  with fixed $|y|<1$; see Figure~\ref{fig:limits}. Homogeneous functions in the interior are functions of $y$ only, and play an important role in this paper since they capture the relevant asymptotics in this regime.}
\begin{equation}
  \label{eq:interior:asym}
  \phi\sim \Psi_1[N](t,r\omega)=\frac{1}{4\pi}\int_{\mathbb{S}^2}\frac{N(\sigma) \ud S(\sigma)}{t-r\langle\sigma,\omega\rangle}, \qquad (r/t<1,t\to\infty),\qquad \text{ where } N(\sigma)= \int_{-\infty}^\infty n(q,\sigma)\ud q\,.
\end{equation}
Furthermore the functions $N_{01}(\omega)$ and $N_0(\omega)$ appear in the expansion of $\Psi_1[N]$ as $r/t\to 1$ (see Section~\ref{sec:interior}).
Similar statements (see Section~\ref{sec:asym:cubic}) can be made for sources of the form
\begin{equation}
  \label{eq:F:cubic}
  F(t,r\omega)\sim \frac{m(r-t,\omega)}{r^3},\qquad (r\sim t,r\to\infty)\,.
\end{equation}

The source terms \eqref{eq:F:slow} which are falling off quadratically in $r$ in the wave zone occur effectively for wave equations with a quadratic nonlinearity satisfying the weak null condition; see Section~\ref{sec:motiv}. In particular the occurence of a log-term in the asymptotics \eqref{eq:wavezone:log} is relevant for these nonlinearities. Similary  source terms \eqref{eq:F:cubic} with cubic fall-off in $r$ occur for wave equations satisfying the null condition.

\subsection{Scattering problems with sources in the wave zone.}
\label{sec:scattering:intro}

In settings where the asymptotics are captured by \eqref{eq:wavezone:F}, and the decay of the radiation field is sufficiently fast,
there is a well-defined scattering problem:

\smallskip
\begingroup
\leftskip 10pt
\rightskip 10pt

\noindent\textsl{Given a radiation field $\mathcal{F}_0$, show the existence of a unique solution to \eqref{eq:wave:F} so that \eqref{eq:wavezone:F} holds with the prescribed radiation field $\mathcal{F}_0$.}

\endgroup

\smallskip
\noindent We have shown in \cite{LS20} that for radiation fields that decay like $|\mathcal{F}_0(q,\omega)|\les 1/\langle q\rangle^\gamma$, with $1/2<\gamma<1$,  there exists  a unique solution  to \eqref{eq:wave:F}, say with $F=0$, so that \eqref{eq:wavezone:F} holds, and moreover the solutions decay globally, $|\phi|\les 1\big/\langle t+r\rangle\langle t-r\rangle^{\gamma'}$, at a rate $\gamma'< \gamma$. See Theorem~1.1 in~\cite{LS20}; for related results see Section~\ref{sec:further} below.

However, for asymptotics of the form \eqref{eq:wavezone:log} it is not \emph{a priori} clear what the scattering data should be and whether it can be chosen freely. Neither  is it \emph{a priori} clear whether the interior asymptotics \eqref{eq:interior:asym} are part of the scattering data or whether the functions in \eqref{eq:interior:F:limits} can be chosen freely, or independently.

In this paper,  we identify precisely the part of the scattering data \emph{that can be prescribed freely} for the problems \eqref{eq:F:slow} and \eqref{eq:F:cubic},
and we will derive the compatibility conditions imposed by the source on the prescription of the radiation field at null infinity.
The procedure developed in this paper can be applied to a large class of semi-linear wave equations satisfying the weak null condition; see Section~\ref{sec:motiv}.

\subsubsection{Quadratic source terms.}
\label{sec:intro:quadratic}

Consider the equation
\begin{equation}
  \label{eq:sourcbegining}
  \Box\phi+\frac{n(r-t,\omega)}{r^2}\chi=0,
\end{equation}
where $\chi$ is a cutoff supported in the wave zone  away from the origin; in fact, we pick $\chi=\chi(\tminusr/r)$ with $\chi(s)=1$ when $s<1/4$ and $\chi=0$ when $s>1/2$.
We want to solve the equation \eqref{eq:sourcbegining} with prescribed asymptotics as $t$ tends to infinity. However, the forward solution with vanishing data at past infinity does not have the simple asymptotics of solutions to the homogeneous wave equation, and convolution of the source term in  \eqref{eq:sourcbegining} with the backward fundamental solution of $\Box$ is unbounded.

We recall from \cite{L90a,L17} that the convolution of the source term in \eqref{eq:sourcbegining} with the forward fundamental solution $E_+=\delta(t^2-|x|^2)H(t)/2\pi$ of $-\Box$,
is given by\footnote{See Appendix~\ref{sec:sourseformulas} for discussion, and \eqref{eq:k2sol} and \eqref{eq:chi:rho} for precise form of the cutoff.}
 \begin{equation}\label{eq:Phi:21sec2intro}
\phi(t,x)=\Phi[n](t,r\omega)=\!\frac{1}{4\pi}\!\int_{{}_{\!}r-t}^{\infty} \! \int_{\mathbb{S}^2}\frac{\chi\,
 n({q},{\sigma})\, dS({\sigma})\,d {q}\!\!}{t\!-\!r\!+\!{q}\!+\!r(1\shortminus \langle\, \omega,{\sigma}\rangle)} \,.
\end{equation}
From this formula it follows that one has asymptotics in $t-r$ and $1/r$ in the wave zone  $r\sim t$, and asymptotics in $(r-t)/r$ and $1/r$ in the interior, near the lightcone for $r/t<1$.

In fact, in the wave zone, we write $\Phi[n](t,r\omega)=\Phi[n-n_\omega]+\Phi[n_\omega]$, and find that  for $\omega$ and $r-t$ fixed, as $r\to\infty$, the main contribution comes from the source  $n_\omega(q)=n(q,\omega)$:
\begin{equation}\label{eq:asymptotics:log:div}
    \Phi[n_\omega](t,r\omega)\sim \frac{1}{r}\ln\Bigl(\frac{2r}{\tminusr}\Bigr) \mathcal{F}_{01}(r-t,\omega)\,,\qquad \mathcal{F}_{01}(r-t,\omega)= \frac{1}{2}\int_{r-t}^\infty n(q,\omega)\ud q\,, \qquad (r\sim t,r\to\infty)\,.
  \end{equation}
  We refer the reader to \cite[Section 1.2.2 \& Section 7]{L17} for further discussion and derivations.

Moreover, for the forward asymptotics in the interior, for $r/t< 1$ fixed, as $t\to\infty$, we derive from the formula \eqref{eq:Phi:21sec2intro} (see also \cite[Section 9]{L17}) that to leading order
\begin{equation}\label{eq:interior:leading:hom}
  \Phi[n](t,r\omega) \sim \frac{1}{4\pi }\int_{\mathbb{S}^2}\frac{N(\sigma)}{t-r\langle\omega,\sigma\rangle}\ud S(\sigma),\quad (r/t<1,t\to\infty)\,,\quad \text{ where}\quad N(\sigma)= \int_{-\infty}^\infty\!\!\! n(q,\sigma)\ud q\,.
\end{equation}
This is a homogeneous function of degree $-1$, which can be viewed as a superposition of plane waves.

\subsubsection{Matching of the radiation field to the interior homogeneous solution.}
\label{sec:matching:intro}

For the forward problem, the asymptotics in the wave zone \emph{automatically match} the asymptotics in the interior of the lightcone.
Let us verify this at leading order with the formulas above; see Figure~\ref{fig:limits} for geometric illustration:

In \eqref{eq:asymptotics:log:div} we have \emph{first} derived the asymptotic form for $\Phi[n]$ as $r\to\infty$  with $r-t$ and $\omega$ fixed,
 and can now \emph{secondly} take the limit $q\to -\infty$ towards the interior:
\begin{equation}
  \label{eq:match:F01:intro}
  2\mathcal{F}_{01}(q,\omega)= \int_{q}^\infty n(q,\omega)\ud q \longrightarrow N(\omega) \qquad (q\to-\infty)\,.
\end{equation}
This matches the limit obtained from \eqref{eq:interior:leading:hom} where we have \emph{first} derived the asymptotic form as $t\to\infty$ with $r/t<1$ fixed, and now \emph{secondly}  take the limit $r/t\to 1$, which can be read off from the expansion of the homogeneous solution towards the lightcone:
\begin{equation}\label{eq:phi:1:infty}
  \phi_{1,\infty}= \frac{1}{4\pi}\int_{\mathbb{S}^2}\frac{N(\sigma) \ud S(\sigma)}{t-r\langle\omega,\sigma\rangle}\sim \frac{1}{2r} N(\omega) \ln\Bigl\lvert \frac{2r}{r-t}\Bigr\rvert+\frac{1}{r}N_0(\omega),\qquad (r/t\to 1).
\end{equation}

\begin{figure}[tb]
  \centering
  \includegraphics[scale=0.4]{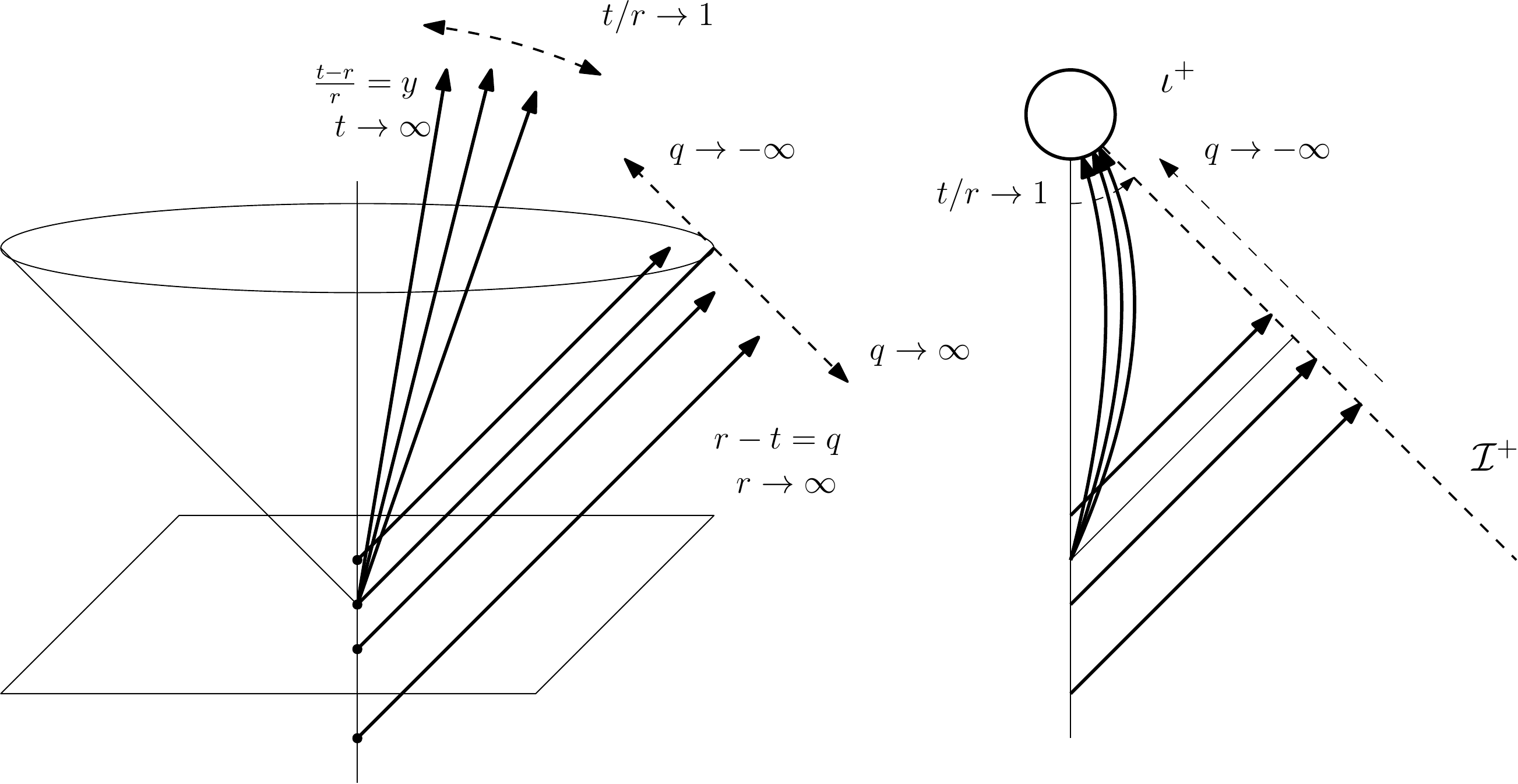}
  \caption{Time-like and null rays depicted in Minkowski spacetime (left) and the Penrose diagram
  (right) to illustrate the matching conditions arising from limits taken first along null rays
  $r-t=q$, towards null infinity $(\mathcal{I}^+)$ as $r\to \infty$, and then second towards the interior as $q\to-\infty$, or, alternatively, first along time-like curves $t/r-1=y$, towards time-like infinity $(\iota^+)$ as $t\to\infty$, and then secondly towards the lightcone as $r/t\to 1$. }
  \label{fig:limits}
\end{figure}

Similarly at the next order, the expansion of $\Phi[n]$ in the wave zone,
\begin{equation}
  \label{eq:rad:intro}
  \Phi[n]\sim \phi_{rad}=\frac{1}{r}\ln\frac{2r}{\tminusr}\  \mathcal{F}_{01}(r-t,\omega)+\frac{1}{r}\mathcal{F}_0(r-t,\omega),\qquad (r\sim t,r\to\infty),
\end{equation}
matches the expansion of the homogeneous solution \eqref{eq:phi:1:infty} in the interior, in the sense that:
\begin{equation}
  \label{eq:match:F0:intro}
  \lim_{q\to -\infty}\mathcal{F}_0(q,\omega)=N_0(\omega) .
\end{equation}

For the forward problem \eqref{eq:match:F01:intro} and \eqref{eq:match:F0:intro} are automatically satisfied as a result of the formula \eqref{eq:Phi:21sec2intro}.
The asymptotics towards time-like infinity match the asymptotics at null-infinity; the limits depicted in Figure~\ref{fig:limits} \emph{commute}.

However, for the scattering problem these are \emph{matching conditions} that relate  the expansion  in the wave zone, $\phi_{rad}$ in \eqref{eq:rad:intro}, to a non-trivial homogeneous solution in the interior, as given in \eqref{eq:phi:1:infty}. The condition \eqref{eq:match:F0:intro} says in particular that the radiation field $\mathcal{F}_0(q,\omega)$ does not decay in the interior as $q\to -\infty$, and we will see that  as a result of the other matching conditions, even after subtracting the limit $N_0(\omega)$, $\mathcal{F}_0-N_0\chi_{q<0}$ cannot be  prescribed arbitrarily.

Here and below $\chi_{q<0}=\chi_{q<0}(s)$ is a smooth cutoff function so that  $\chi_{q<0}(s)=1$ when $s<-1$ and $\chi_{q<0}(s)=0$ for $s>0$. It is introduced because the limit $N_0(\omega)$ only needs to be subtracted from the radiation field $\mathcal{F}_0$ in the interior.

\subsubsection{Scattering results with  quadratic source terms.}

In this paper, \emph{we will identify which part of the radiation field can be prescribed freely and independently of the source}, and we find a solution to \eqref{eq:sourcbegining} with the prescribed scattering data, that matches to an explicit homogeneous solution in the interior  determined by the source.

\begin{theorem}[Scattering solutions with quadratic source terms] \label{thm:n}
  Let $n(q,\omega)$ be a smooth source function satisfying $|(\langle q\rangle \partial_q)^k\partial_\omega^\alpha n|\les \langle q\rangle^{-2-2\gamma}$,  for some $1/2\leq \gamma<1$, 
  and let $\mathcal{H}_0(q,\omega)$ be a smooth function satisfying \[|(\langle q\rangle \partial_q)^k\partial_\omega^\alpha\mathcal{H}_0|\les \langle q\rangle^{-2}\,.\]
  Then  there is a smooth solution $\phi$ to \eqref{eq:sourcbegining} with the asymptotics \eqref{eq:rad:intro} in the wave zone,
 \begin{equation}
   \phi\sim \frac{1}{r}\ln\frac{2r}{\tminusr}\  \mathcal{F}_{01}(r-t,\omega)+\frac{1}{r}\mathcal{F}_0(r-t,\omega),\qquad (r\sim t,r\to\infty),
 \end{equation}
 where $\mathcal{F}_{01}(q,\omega)$ is given by \eqref{eq:match:F01:intro}, and the radiation field is given by
 \begin{equation}
   \label{eq:thm:F}
   \mathcal{F}_0(q,\omega)=N_0(\omega)\chi_{q<0}+M_0(\omega)\frac{q}{\langle q\rangle^2}\chi_{q<0}+\mathcal{H}_0(q,\omega)\,,
 \end{equation}
 \emph{provided $\mathcal{H}_0$ satisfies a compatibility condition of the form}
 \begin{equation}
   \label{eq:thm:n:compatibility}
   \int_{-\infty}^{\infty}\mathcal{H}_0(q,\omega)\ud q=\mathcal{P}(\omega)\,.
 \end{equation}
 Here $N_0$, $M_0$, and $\mathcal{P}$ are functions on the sphere determined from the source function $n$ alone. 
 Furthermore the solution $\phi$ has the interior asymptotics \eqref{eq:interior:leading:hom}.
\end{theorem}

The solution $\phi$ obtained in the theorem is of the form
\begin{equation}
  \phi=\psi_{asym}+\psi_{rem},
\end{equation}
where $\psi_{asym}$ is an approximate solution, and constructed, guided by the forward solution, by interpolating between a second order expansion $\psi_{rad}$ in the wave zone, and a homogeneous solution $\psi_{hom}$ in the interior:
\begin{equation}
  \psi_{asym}=\psi_{rad}\, \chi+(1-\chi)\, \psi_{hom}\,,\qquad \psi_{hom}=\phi_{1,\infty}+\phi_{2,\infty}\,.
\end{equation}
Here $\chi=\chi(\tminusr/r^{1/2})$ is a smooth cutoff close to the lightcone, and  $\phi_{1,\infty}=\phi_{1,\infty}[N]$ and $\phi_{2,\infty}=\phi_{2,\infty}[M]$ are solutions to $\Box\psi=0$ in the interior $t>|x|$, which are \emph{homogeneous}  in $(t,x)$ of degree $-1$, and $-2$, respectively, and only determined from the functions
\begin{equation}
  N(\omega)=\int_{-\infty}^{\infty}n(q,\omega)\ud q\,,\qquad M(\omega)=\int_{-\infty}^{\infty} q \,n(q,\omega)\ud q\,.
\end{equation}

The second order expansion in the wave zone will be discussed in Section~\ref{sec:higherordernull}, see~\eqref{eq:secondorderapproximatesource2}.  The relevant homogeneous solutions are described in Section~\ref{sec:interior}. The proof that an approximate solution with matching asymptotics can be constructed in this way for \eqref{eq:sourcbegining} is the content of Section~\ref{sec:matching}.

In fact, we show that $\psi_{asym}$ satisfies
\beq
\Box\, \psi_{asym} +\chi\frac{n(r\!-t,\omega)}{r^2}=R_{asym},
\eq
where the error $R_{asym}$ decays faster than the source term in \eqref{eq:sourcbegining}, (see Section~\ref{sec:approximate} for precise bounds).
Then we estimate the remainder
\beq\label{eq:remsol}
\Box\, \psi_{rem}=-R_{asym}\,,\qquad \psi_{rem}=\phi-\psi_{asym}\,,
\eq
by convolving with the backward fundamental solution, which now is well-defined and decaying faster than a solution to the homogeneous equation; see  Section~\ref{sec:remainder}.

The origin of the $1/q$ tail in \eqref{eq:thm:F}, which cannot be chosen freely, is the matching condition with  the homogeneous degree $-2$ solution $\phi_{2,\infty}$ which contributes to leading order the following term as $r/t\to 1$:
\begin{equation}
  \phi_{2,\infty}[M](t,r\omega)\sim\frac{1}{r^2}\frac{r}{r-t} M_0(\omega),\qquad (t\to\infty, r/t\to 1),\qquad M_0(\omega)=\frac{1}{2}M(\omega)\,.
\end{equation}

The compatibility condition is derived in Section~\ref{sec:integrabilitycondition}.
The function  $\mathcal{P}(\omega)$ in \eqref{eq:thm:n:compatibility} is the solution to an elliptic equation \eqref{eq:triangle:P} on the sphere.

\subsubsection{Cubic source terms.}
\label{sec:cubic:intro}

We proceed similarly for the equation
\begin{equation}
  \label{eq:sourcbegining:cubic}
  \Box\phi+\frac{m(r-t,\omega)}{r^3}\chi=0\,.
\end{equation}
Here we recall from \cite{L90a,L17} that the convolution of the cubic source term with the fundamental solution of the wave operator is given by\footnote{See Appendix~\ref{sec:sourseformulas} for discussion, and \eqref{eq:k3solformula} and \eqref{eq:chi:rho} for precise form of the cutoff.}
\beq
{\Phi_{\chi}}^{\!\!\!3}[m](t,r\omega)=\int_{r-t}^{\infty} \frac{1}{2\pi}\int_{\mathbb{S}^2}{\frac{
 m({q},{\sigma})\chi \ud S({\sigma})}{(t-r+{q})(t+r+q)}\,
}\, \ud {q}\,.
\eq
As in the situation with quadratic source terms discussed above, 
solutions to \eqref{eq:sourcbegining:cubic} have homogeneous asymptotics in the interior of the lightcone $(r/t<1,t\to\infty)$
\begin{equation}
  \label{eq:Phi3:intro}
  \phi\sim \Phi^3[m]=\frac{2 \overline{M}}{(t+r)(t-r)} \,, \quad\text{ where }  \overline{M}=\frac{1}{4\pi}\int_{\mathbb{S}^2}M(\sigma)\ud S(\sigma)\,,\quad M({\sigma})=\int_{-\infty}^{+\infty} m(q,\sigma)\, \ud q\,.
\end{equation}
The relevant homogeneous solution is discussed in Section~\ref{sec:asym:cubic},
and for the asymptotics in the wave zone we have from Section~\ref{sec:radiationfield} that
\begin{equation}
  \label{eq:wavezone:cubic:intro}
  \phi \sim \frac{\mathcal{G}_0(r\shortminus t,\omega)}{r}
  +\frac{\mathcal{G}_1(r\shortminus t,\omega)}{r^2},\qquad
  \mathcal{G}_1(q,\omega)=-\int_q^\infty  m(q',\sigma)\, \ud q'/2
  \qquad (r\sim t,r\to\infty)\,.
\end{equation}
Thus while $\phi$ has a radiation field,
the $1/\langle q\rangle$ tail of the radiation field $\mathcal{G}_0(q,\omega)$ is determined by the source,
\begin{equation}
  \label{eq:G0:tail}
  \mathcal{G}_0(q,\omega)\sim \overline{M}/\langle q\rangle, \qquad (q\to -\infty)\,.
\end{equation}

\begin{theorem}[Scattering solutions with cubic source terms]\label{thm:m}
  Let $m$ be a smooth source function satisfying $|(\langle q\rangle\partial_q)^k\partial_\omega^\alpha m(q,\omega)|\les  \langle q\rangle^{-1-2\gamma}$,  for some $1/2\leq \gamma<1$,
  and let $\widetilde{\mathcal{G}_0}$ be a smooth function satisfying \[|(\langle q\rangle \pa_q)^k\pa^\alpha_\omega \widetilde{\mathcal{G}_0}| \les \langle q\rangle^{-2}\,.\]
  Then there exists a smooth solution $\phi$ to \eqref{eq:sourcbegining:cubic} with the asymptotics \eqref{eq:wavezone:cubic:intro} in the wave zone, where
  \begin{equation}
    \mathcal{G}_0(q,\omega)=\overline{M}{\langle q\rangle^{-1}}\chi_{q<0}+\widetilde{\mathcal{G}_0}(q,\omega)
  \end{equation}
provided $\widetilde{\mathcal{G}_0}$ satisfies a compatibility condition of the form
\begin{equation}
  \label{eq:thm:m:compatibility}
  \int_{-\infty}^\infty \widetilde{\mathcal{G}_0}(q,\omega)=Q(\omega)\,.
\end{equation}
Here $M(\omega)$ and $\overline{M}$ are determined by the source $m$ as in \eqref{eq:Phi3:intro},
and $Q(\omega)$ is also determined by the source  as the unique solution to the following elliptic equation on $\mathbb{S}^2$:
\begin{equation}
  \triangle_\omega Q(\omega)=-M(\omega)+\overline{M}\,.
\end{equation}
\end{theorem}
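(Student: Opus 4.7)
The plan is to adapt the construction used in Theorem~\ref{thm:n} to the cubic setting. I decompose $\phi=\psi_{asym}+\psi_{rem}$, where $\psi_{asym}$ is an explicit approximate solution interpolating a wave-zone ansatz with a homogeneous interior ansatz through a cutoff $\chi(\tminusr/r^{1/2})$ near the lightcone, and $\psi_{rem}$ is the correction obtained by convolving the resulting error against the backward fundamental solution of $\Box$.

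First I would build $\psi_{asym}=\psi_{rad}\,\chi+(1-\chi)\,\psi_{hom}$. For the wave-zone piece I start with the prescribed radiation field $\mathcal{G}_0(r-t,\omega)/r$ and add successive corrections in $1/r$, guided by the integral representation of $\Phi^3_\chi[m]$, so that $\Box\psi_{rad}+\chi m/r^3$ decays faster than $m/r^3$ in the wave zone. The role of the $\overline{M}\chi_{q<0}\langle q\rangle^{-1}$ tail in $\mathcal{G}_0$ is visible by expanding the interior solution at the lightcone: as $r/t\to 1$ with $r-t=q$ fixed, $\Phi^3[m]=2\overline{M}/((t+r)(t-r))\sim \overline{M}/(r\langle q\rangle)$, matching the leading term of $\mathcal{G}_0/r$. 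For the interior piece I take $\psi_{hom}=\Phi^3[m]+\Psi_2[Q]$, where $\Psi_2[Q]$ is a homogeneous degree $-2$ solution in the interior with angular profile $Q$, constructed via a plane-wave superposition analogous to $\Psi_1[N]$ in \eqref{eq:interior:asym}.

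The key step is identifying the matching conditions that enforce \eqref{eq:thm:m:compatibility} and the elliptic equation for $Q$. The angular information carried by $M(\omega)$ is only partially captured by $\Phi^3[m]$, which sees only the mean $\overline{M}$; the deficit $M(\omega)-\overline{M}$ must be absorbed at the next order of the interior expansion. Requiring that $\Phi^3[m]+\Psi_2[Q]$, combined with the wave-zone piece through the cutoff, produces an error not exceeding the faster-decaying part of the source yields, through angular integration by parts on $\mathbb{S}^2$, the constraint $\triangle_\omega Q=\overline{M}-M$. This equation is uniquely solvable on $\mathbb{S}^2$ (up to a fixed convention on the mean of $Q$) because the right-hand side has zero average by the very definition of $\overline{M}$. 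Reading the matching from the wave-zone side, the same angular function $Q$ is precisely the subleading coefficient in the expansion of $\psi_{rad}$ at the lightcone, which is in turn proportional to $\int_{-\infty}^\infty\widetilde{\mathcal{G}_0}(q,\omega)\,\ud q$; this yields the compatibility condition \eqref{eq:thm:m:compatibility}.

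Having chosen $\psi_{rad}$ and $\psi_{hom}$ consistently, the remainder $\psi_{rem}=\phi-\psi_{asym}$ solves $\Box\psi_{rem}=-R_{asym}$, which I would solve by convolving $R_{asym}$ against the backward fundamental solution. Under the decay hypotheses $|m|\les\langle q\rangle^{-1-2\gamma}$ and $|\widetilde{\mathcal{G}_0}|\les\langle q\rangle^{-2}$ with $\gamma\geq 1/2$, the error $R_{asym}$ decays strictly faster than the source, ensuring absolute convergence of the convolution and global decay bounds on $\psi_{rem}$, as in \eqref{eq:remsol}. The main obstacle is the careful derivation of the correct matching conditions at the lightcone, which in the cubic case is responsible both for the elliptic equation $\triangle_\omega Q=\overline{M}-M$ on the sphere and for the non-negotiable $\overline{M}/\langle q\rangle$ tail of $\mathcal{G}_0$; everything else follows the template of Theorem~\ref{thm:n}.
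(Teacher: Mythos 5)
Your overall template (glue a wave-zone expansion to the interior homogeneous asymptotics with a cutoff near the lightcone, then solve for the remainder by convolution with the backward fundamental solution) is the paper's, and your leading-order matching is correct: $\Phi^3[m]\sim \overline{M}/(r\langle q\rangle)$ near the lightcone does account for the non-negotiable $\overline{M}\langle q\rangle^{-1}\chi_{q<0}$ tail of $\mathcal{G}_0$. But the key step --- where the elliptic equation for $Q$ comes from --- is wrong as described. You propose to absorb the angular deficit $M(\omega)-\overline{M}$ by adding an interior homogeneous degree $-2$ solution $\Psi_2[Q]$ with profile $Q$. In the plane-wave parametrization \eqref{eq:Psi:2} such a solution expands near the lightcone as in \eqref{eq:homoexpansion2}, with leading coefficient $\tfrac12 Q(\omega)$ on the $\tfrac{1}{r(r-t)}$ term and coefficient $-\tfrac14\triangle_\omega Q$ on the $\tfrac{1}{r^2}\ln\tfrac{2r}{t-r}$ term (cf.\ \eqref{eq:M01:given}); for nonconstant $Q$ this would inject an angular-dependent $\langle q\rangle^{-1}$ tail into $\mathcal{G}_0$ at order $r^{-1}$ and a logarithmic term at order $r^{-2}$, neither of which appears in the theorem's radiation field or in the cubic wave-zone ansatz \eqref{eq:approximatesource2higherorder}. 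In fact the interior asymptotics remain exactly $\Psi^3[M]=2\overline{M}/((t+r)(t-r))$, angular-independent; the deficit $M(\omega)-\overline{M}$ is absorbed entirely by the radiation field, not by the interior.

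The missing ingredient is the second-order wave-zone coefficient $\mathcal{G}_1$ and its transport equation. Writing $\Psi^3_{rad}=\mathcal{G}_0/r+\mathcal{G}_1/r^2$ and equating coefficients of $r^{-3}$ in $\Box\Psi^3_{rad}+m/r^3$ gives $-2\mathcal{G}_1'+\triangle_\omega\mathcal{G}_0=-m$, see \eqref{eq:de:G1}. The interior expansion \eqref{eq:k3homsolformulaasym} forces $\mathcal{G}_1\to-\overline{M}/2$ as $q\to-\infty$, while matching to the trivial exterior forces $\mathcal{G}_1\to 0$ as $q\to+\infty$; integrating the transport equation over all $q$ (using that $\triangle_\omega\mathcal{G}_0=\triangle_\omega\widetilde{\mathcal{G}}_0$ because the $\overline{M}\langle q\rangle^{-1}$ tail is independent of $\omega$) yields $\overline{M}=\triangle_\omega\int\widetilde{\mathcal{G}}_0\,\ud q+M(\omega)$, which is precisely \eqref{eq:thm:m:compatibility} with $\triangle_\omega Q=-M+\overline{M}$, solvable since the right-hand side has zero mean. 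Your ``angular integration by parts on $\mathbb{S}^2$'' does not produce this: the Laplacian comes from the angular part of the wave operator acting on $\mathcal{G}_0/r$, and the integration that generates the constraint is in $q$, not over the sphere. The rest of your argument (the cutoff gluing and the radial estimate for the remainder) is fine once this is repaired.
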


The matching condition \eqref{eq:G0:tail} as well as the compatibility conditions  are derived in Section~\ref{sec:matching:cubic}.

\subsection{Exterior problem with slowly decaying data.}
\label{sec:ext:intro}

The scattering solutions discussed in Theorem~\ref{thm:n} \& \ref{thm:m} all have homogeneous asymptotics in the interior, but decay fast in the exterior of the light cone. In particular we made the choice
\begin{equation}
  \label{eq:F0:ext:intro}
  \lim_{q\to\infty}\mathcal{F}_{01}(q,\omega)=0\,,\qquad   \lim_{q\to\infty}\mathcal{F}_0(q,\omega)=0\,.
\end{equation}
This is justified by the asymptotics for the forward problem \emph{with fast decaying data}.
Indeed we recall from \cite{H97} that the radiation field of solutions to the homogeneous equation
\begin{equation} \label{eq:wave:intro:data}
\Box \,\phi=0,\qquad \phi\,\Big|_{t=0}=f,\quad \pa_t\phi\, \Big|_{t=0}=g\,,
\end{equation}
is given by
\beq\label{eq:F0:Radon}
{\mathcal F}_0(q,\omega)=\mathcal{R}[g](q,\omega)-\pa_q \mathcal{R}[f](q,\omega),
\eq
where $\mathcal{R}[g]$ denotes the Radon transform of the data $g$:
\beq
\mathcal{R}[g](q,\omega)=\int \delta\big(q-\langle \omega,y\rangle\big) g(y)\, dy =\int_{\langle \omega,y\rangle=q} g(y) \, dS(y)\,.
\eq
For $\mathcal{F}_0$ to be well defined we need $g(y)$ and $\nabla f(y)$ to decay like $\langle y\rangle^{-2-\kappa}$ for $\kappa\!>\!0$.  In that case it follows that $\mathcal{F}_0(q,\omega)\to 0$ as $q\!\to\!\infty$.
However, this excludes data that are relevant in many important physical situations that involve the description of mass, charge and angular moment, which correspond to terms that decay like $r^{-1}$ or $r^{-2}$.

In this section, we present results that allow us to solve the scattering problem in situations when the radiation field does not decay in the exterior, corresponding forward solutions with \emph{slowly decaying data}.
In fact, for \emph{arbitrary} limits
\begin{equation}
  \label{eq:F0:ext:N:intro}
  \lim_{q\to\infty}\mathcal{F}_{01}(q,\omega)=N_{01}(\omega)\,,\qquad   \lim_{q\to\infty}\mathcal{F}_0(q,\omega)=N_0(\omega)\,,
\end{equation}
we find a homogeneous solution in the exterior that matches these limits on the lightcone.

 \subsubsection{Homogeneous degree $-1$ and $-2$ solutions to the homogeneous equation in the exterior.}
\label{sec:hom:ext:intro}

In Section~\ref{sec:hom:ext} we will give a complete characterisation of the homogeneous degree $-1$ solutions in the exterior of the light cone corresponding to the limits \eqref{eq:F0:ext:N:intro}.

\begin{theorem}[Homogeneous degree $-1$ solutions in the exterior]\label{thm:hom:N}
  Let $N_{01}(\omega)$ and $N_0(\omega)$ be smooth functions on the sphere $\mathbb{S}^2$.
  Then there exists a unique homogeneous degree $-1$ solution $\phi_1$ to the wave equation $\Box\phi=0$ in the exterior of the lightcone $|x|>t$, so that
  \begin{equation}
    \phi_1\sim \frac{1}{r}\ln\frac{2r}{r-t}N_{01}(\omega)+\frac{1}{r} N_{0}(\omega),\qquad (r/t\to 1)\,.
  \end{equation}
\end{theorem}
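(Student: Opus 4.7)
The plan is to construct $\phi_1$ explicitly as a superposition of the two families of homogeneous degree $-1$ plane-wave solutions of $\Box\phi=0$ available in the exterior, and then match the two prescribed asymptotic coefficients $N_{01}$, $N_0$ at the lightcone by solving an invertible linear system for the densities. The building blocks are $\mathrm{p.v.}\,(\langle\sigma,x\rangle-t)^{-1}$ and $\delta(\langle\sigma,x\rangle-t)$, each of which for $|\sigma|=1$ is a homogeneous degree $-1$ distributional solution of $\Box\phi=0$ (one computes $\Box[f(\langle\sigma,x\rangle-t)]=(|\sigma|^2-1)f''=0$). I would set
\[
\phi_1(t,x) \;=\; \frac{1}{2\pi}\,\mathrm{p.v.}\!\int_{\mathbb{S}^2}\frac{N_{01}(\sigma)\,dS(\sigma)}{t-\langle\sigma,x\rangle} \;+\; \frac{1}{4\pi}\!\int_{\mathbb{S}^2}\mu(\sigma)\,\delta(\langle\sigma,x\rangle-t)\,dS(\sigma),
\]
with $\mu\in C^\infty(\mathbb{S}^2)$ to be determined. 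The first term is the analytic continuation, via principal value, of the interior homogeneous solution $\phi_{1,\infty}[2N_{01}]$ of \eqref{eq:phi:1:infty} across the future lightcone; the second term is supported in the closed exterior and vanishes in the open interior.

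To match at the lightcone I would parametrise $\sigma$ in polar coordinates about $\omega=x/|x|$ by $u=\langle\sigma,\omega\rangle$ and azimuthal angle $\varphi$, so $dS(\sigma)=du\,d\varphi$ and $\langle\sigma,x\rangle-t=r(u-y)$ with $y=t/r$. Each surface integral then reduces to a one-dimensional integral of the azimuthal average $\bar f(u,\omega):=\int_0^{2\pi}f(\sigma(u,\varphi,\omega))\,d\varphi$. Combining $\mathrm{p.v.}\!\int_{-1}^1 du/(y-u)=\ln\frac{1+y}{1-y}$ with the splitting $\bar N_{01}(u)=\bar N_{01}(y)+(\bar N_{01}(u)-\bar N_{01}(y))$ yields, as $y\to 1^-$,
\[
\mathrm{p.v.}\!\int_{-1}^1\frac{\bar N_{01}(u,\omega)}{y-u}\,du \;=\; 2\pi N_{01}(\omega)\ln\frac{2r}{r-t} \;-\; J[N_{01}](\omega) + o(1),
\]
with $J[N_{01}](\omega):=\int_{-1}^1(\bar N_{01}(1,\omega)-\bar N_{01}(u,\omega))/(1-u)\,du$ a Funk-type integral transform of $N_{01}$, smooth on $\mathbb{S}^2$ by the smoothness of $N_{01}$. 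The $\delta$-integral likewise reduces to $r^{-1}\bar\mu(y,\omega)\to \mu(\omega)/(2r)$ as $y\to 1^-$ (using $\bar\mu(1,\omega)=2\pi\mu(\omega)$). Combining,
\[
\phi_1(t,r\omega) \;\sim\; \frac{N_{01}(\omega)}{r}\ln\frac{2r}{r-t} \;+\; \frac{1}{r}\!\left[\tfrac{1}{2}\mu(\omega)-\tfrac{1}{2\pi}J[N_{01}](\omega)\right], \qquad r/t\to 1^+,
\]
and setting the bracketed $r^{-1}$ coefficient equal to $N_0(\omega)$ uniquely determines $\mu=2N_0+J[N_{01}]/\pi\in C^\infty(\mathbb{S}^2)$, establishing existence.

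For uniqueness I would reduce $\Box\phi=0$ for a degree $-1$ homogeneous $\phi$ to a wave equation on $\mathbb{R}_\tau\times\mathbb{S}^2$. In hyperbolic coordinates $t=\rho\sinh\tau$, $r=\rho\cosh\tau$, writing $\phi=G(\tau,\omega)/(\rho\cosh\tau)$, a direct calculation gives the clean reduced equation
\[
\partial_\tau^2 G \;=\; \cosh^{-2}\!\tau\,\Delta_\omega G,
\]
and the prescribed asymptotics become $G(\tau,\omega)\sim 2\tau N_{01}(\omega)+N_0(\omega)$ as $\tau\to+\infty$. The difference of any two candidate solutions therefore has $G\to 0$ and $\partial_\tau G\to 0$ at $\tau\to+\infty$. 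Introducing the Gudermannian coordinate $\eta=\int_0^\tau ds/\cosh s\in(-\pi/2,\pi/2)$, the equation becomes $\tilde G_{\eta\eta}-\tan\eta\,\tilde G_\eta=\Delta_\omega\tilde G$ on a bounded strip, with standard principal symbol $\partial_\eta^2-\Delta_\omega$ and null characteristics $\eta\pm\theta=\mathrm{const}$ of unit speed. Since the total Gudermannian length is $\pi$ and matches the diameter of $\mathbb{S}^2$, the backward characteristic cone from any point $(\eta_0,\omega_0)$ reaches the boundary $\eta=\pi/2$ in finite $\eta$-distance, so standard energy estimates combined with finite-speed-of-propagation propagate the vanishing of $(\tilde G,\tilde G_\eta)$ at $\eta=\pi/2$ throughout the strip, forcing $\tilde G\equiv 0$ and hence uniqueness of $\phi_1$.

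The principal obstacle is the precise asymptotic expansion of the principal-value plane-wave integral as $y\to 1^-$: extracting the regular part at $O(r^{-1})$ as the Funk-type transform $J[N_{01}]$ and verifying its smoothness on $\mathbb{S}^2$. A secondary difficulty is handling the singular first-order term $\tan\eta\,\tilde G_\eta$ of the reduced equation at $\eta=\pm\pi/2$ in the uniqueness argument; the Gudermannian substitution at least regularises the principal part to the standard wave operator, allowing standard hyperbolic theory to be applied on relatively compact substrips and then extended via the finiteness of the Gudermannian length.
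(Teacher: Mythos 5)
Your route is genuinely different from ours and, for the existence half, arguably more economical. We realize $\phi_1$ as the solution of the Cauchy problem with homogeneous data $\phi_1|_{t=0}=M(\omega)/r$, $\partial_t\phi_1|_{t=0}=N(\omega)/r^2$, compute via Kirchhoff's formula that the leading coefficients are $\tfrac12(\mathcal{F}[N]+\mathcal{G}[M])$ and a companion expression at order $r^{-1}$, and then invert the resulting $2\times 2$ system of Funk-type transforms using parity splittings and the inversion formulas of \cite{BEGM03}. You instead take a plane-wave superposition ansatz adapted directly to the target asymptotics: the $\mathrm{p.v.}$ part is forced (its density is $N_{01}$ itself, being the continuation of the interior solution $\Psi_1[2N_{01}]$ across the cone), and the $\delta$-part is then solved for pointwise, so your matching system is triangular and no transform inversion is needed. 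Your expansion of the $\mathrm{p.v.}$ integral as $y\to 1^-$ is correct; the regular part $J[N_{01}]$ is, up to normalization, our $N_0(\omega)$ from \eqref{eq:N:01:0}, and its smoothness follows from the Taylor expansion of circle averages as in Appendix~\ref{sec:SphericalTaylor}. What our construction buys, and yours does not directly provide, is the explicit Cauchy data $(M,N)$ of the solution --- which is what we need downstream, since the point of the exterior analysis is to tie non-decaying radiation fields to $1/r$ and $1/r^2$ tails of initial data (mass, charge) --- together with the parity information behind the Remark following Corollary~\ref{thm:hom:M}.

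On uniqueness, we obtain it from injectivity of the data-to-asymptotics map, while you propose an intrinsic energy argument on the reduced equation $G_{\tau\tau}=\cosh^{-2}\!\tau\,\triangle_\omega G$, which you derive correctly. Two steps need tightening. First, $\partial_\tau G\to 0$ does not follow from $G\to 0$ alone; you need the asymptotic hypothesis to hold together with angular derivatives, so that $\triangle_\omega G$ is bounded, whence $G_{\tau\tau}=O(e^{-2\tau})$ and boundedness of $G$ force $G_\tau=O(e^{-2\tau})$ --- exactly the rate needed for $\tilde G_\eta=\cosh\tau\, G_\tau$ to vanish at $\eta=\pi/2$. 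Second, ``finite speed of propagation from the boundary $\eta=\pi/2$'' is delicate because the data is only attained as a limit at a point where $\tan\eta$ blows up; the clean substitute is the monotonicity
\begin{equation}
  \frac{d}{d\eta}E(\eta)=\tan\eta\int_{\mathbb{S}^2}\tilde G_\eta^{\,2}\,\ud S \geq 0\,,\qquad E(\eta)=\frac12\int_{\mathbb{S}^2}\bigl(\tilde G_\eta^{\,2}+|\nang \tilde G|^2\bigr)\ud S\,,\qquad \eta\in(0,\pi/2)\,,
\end{equation}
which together with $E(\eta)\to 0$ as $\eta\to\pi/2^-$ gives $E\equiv 0$ on $(0,\pi/2)$, hence $\tilde G\equiv 0$ there; standard Cauchy uniqueness from the interior slice $\eta=0$, where the coefficients are smooth, then propagates the vanishing to the rest of the strip. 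With these repairs your argument closes and yields a self-contained proof of the theorem as stated.
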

To prove the theorem we consider solutions to the initial value problem with homogeneous initial data,
\begin{equation}
  \Box\phi_1=0\,,\qquad \phi_1(0,r\omega)=\frac{M(\omega)}{r}\,,\quad \partial_t\phi_1(0,r\omega)=\frac{N(\omega)}{r^2}\,,\qquad (|x|>t)\,,
\end{equation}
and calculate explicitly the leading order coefficents in the expansion near the light cone as $r/t\to 1$.
In fact, to highest order we find that
\begin{equation}
  \label{eq:hom:ext:asym:intro}
  \phi_1\sim \frac{1}{2r}\ln\frac{2r}{r-t}
  \Bigl(\mathcal{F}[N](\omega)+\mathcal{G}[M](\omega)\Bigr)\,,\qquad (r/t\to 1),
\end{equation}
where $\mathcal{F}$ and $\mathcal{G}$ are known transformations of functions on the sphere: $\mathcal{F}$ is the Funk transform, which maps even to even functions on the sphere \cite{F13}, and $\mathcal{G}$ is a related transformation which maps odd to odd functions on the sphere; see Section~\ref{sec:hom:ext:one}. We appeal to the invertibility results in \cite{BEGM03}  to show that the transformation that maps the data $(M,N)$ to the functions $(N_{01},N_0)$ is bijective, and conclude that the functions $M$ and $N$ are \emph{uniquely determined} by $N_{01}$ and $N_0$.

\begin{remark}
  Note that for data
  \begin{equation}
    \phi_1(0,r\omega)=\frac{M(\omega)}{r}\,,\ M(\omega)=M(-\omega) \text{ even,}\qquad \partial_t\phi_1(0,r\omega)=\frac{N(\omega)}{r^2}\,,\ N(\omega)=-N(-\omega) \text{ odd,}
  \end{equation}
  we see from \eqref{eq:hom:ext:asym:intro} that $N_{01}(\omega)=0$, because even functions are in the kernel of $\mathcal{G}$, and odd functions in the kernel of $\mathcal{F}$, and hence there is no logarithmic term to leading order in the expansion.
  This is in particular true when $M(\omega)=M$ is a constant and $N(\omega)=\omega$. 
\end{remark}

One can proceed similarly in the case of homogeneous degree $-2$ solutions, and compute explicitly the asymptotics of solutions to
\begin{equation}
  \Box\phi_2=0\qquad (|x|>t)\,,\qquad \phi_2(0,r\omega)=\frac{K(\omega)}{r^2}\,,\quad \partial_t\phi_2(0,r\omega)=\frac{L(\omega)}{r^3}\,.
\end{equation}
However, it turns out that one can infer immediately from the homogeneous degree $-1$ result that:
\begin{cor}[Homogeneous degree $-2$ solutions in the exterior]\label{thm:hom:M}
  Suppose  $M_{0}(\omega)$ and $M_1(\omega)$ are functions on the sphere $\mathbb{S}^2$.
  Then there exists a unique homogeneous degree $-2$ solution $\phi_2$ to the wave equation $\Box\phi=0$ in the exterior of the lightcone $|x|>t$, such  that
  \begin{equation}
    \label{eq:phi2:intro}
    \phi_2\sim \frac{1}{r^2}\frac{r}{r-t} M_0(\omega)+\frac{1}{r^2}\ln\frac{2r}{r-t}M_{11}(\omega)+\frac{1}{r^2} M_{1}(\omega)\qquad (r/t\to 1)\, ,
  \end{equation}
  where $2M_{11}(\omega)=-\triangle_\omega M_0(\omega)$.
\end{cor}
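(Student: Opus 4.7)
The corollary reduces to Theorem~\ref{thm:hom:N} via the observation that $\partial_t$ commutes with $\Box$ and lowers homogeneity by one: if $\phi_1$ is a homogeneous degree $-1$ solution of $\Box\phi=0$ in the exterior $\{|x|>t\}$, then $\phi_2:=\partial_t\phi_1$ is automatically a homogeneous degree $-2$ solution there. The plan is to realize the solution promised by the corollary in the form $\phi_2=\partial_t\phi_1[N_{01},N_0]$, with $(N_{01},N_0)$ chosen uniquely in terms of $(M_0,M_1)$, and to obtain uniqueness from the underlying radial ODE.

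For existence, given smooth $M_0,M_1$ on $\mathbb{S}^2$, I would first refine the asymptotic expansion of Theorem~\ref{thm:hom:N} by one further order, writing
\begin{equation*}
\phi_1[N_{01},N_0]\sim \frac{N_{01}(\omega)}{r}\ln\frac{2r}{r-t}+\frac{N_0(\omega)}{r}+\frac{r-t}{r^2}\Bigl(B(\omega)\ln\frac{2r}{r-t}+C(\omega)\Bigr)\qquad (r/t\to 1),
\end{equation*}
where $B$ and $C$ are read off the next Frobenius coefficient of the degree $-1$ radial ODE $(y^2-1)F_{yy}+2yF_y+\triangle_\omega F=0$ at its double indicial root $y=1$ (with $y=t/r$); a direct calculation gives $B=\tfrac{1}{2}\triangle_\omega N_{01}$ and $C=C[N_{01},N_0]$ linear in $(N_{01},N_0)$ with nonvanishing coefficient of $N_0$, while the potential $\ln^2$ term at this order is forced to vanish by the ODE recurrence. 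Differentiating in $t$ and using $\partial_t\ln(2r/(r-t))=1/(r-t)$ then yields
\begin{equation*}
\partial_t\phi_1\sim \frac{N_{01}(\omega)}{r(r-t)}-\frac{B(\omega)}{r^2}\ln\frac{2r}{r-t}+\frac{B(\omega)-C(\omega)}{r^2}\qquad (r/t\to 1),
\end{equation*}
which matches the stated form of $\phi_2$ with $M_0=N_{01}$, $M_{11}=-B=-\tfrac{1}{2}\triangle_\omega M_0$ (obtained automatically, which confirms the stated identity), and $M_1=B-C$. Setting $N_{01}:=M_0$ reduces the last relation to a linear equation for $N_0$, uniquely solvable by the nonvanishing of the $N_0$-coefficient in $C$.

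For uniqueness, the difference $\psi$ of two solutions with identical $(M_0,M_1)$ is a homogeneous degree $-2$ exterior solution of $\Box\psi=0$ with vanishing leading coefficients. Writing $\psi=r^{-2}G(s,\omega)$ with $s=(r-t)/r$, the function $G$ satisfies for each spherical harmonic mode the radial ODE $s(2-s)G_{ss}+4(1-s)G_s+(\lambda-2)G=0$, whose indicial roots $\nu=-1,0$ at $s=0$ govern precisely the two asymptotic coefficients $M_0$ and $M_1$ (with $M_{11}$ appearing as the logarithmic resonance determined by $M_0$). Vanishing of both leading coefficients together with standard ODE uniqueness forces $G\equiv 0$, and hence $\psi\equiv 0$. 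The main technical step is the subleading Frobenius computation: one must verify carefully that no $\ln^2$ term arises at the $(r-t)/r^2$ level in $\phi_1$—any such term would, under $\partial_t$, produce a $\ln^2/r^2$ contribution incompatible with the claimed asymptotic form of $\phi_2$—and that the coefficient of $N_0$ in $C$ is nonzero; both facts follow by direct inspection of the degree $-1$ radial ODE recurrence.
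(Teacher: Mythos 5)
Your overall strategy -- realizing $\phi_2$ as $\partial_t\phi_1$ of a homogeneous degree $-1$ exterior solution and reading off the coefficients from the next order of the expansion -- is exactly the paper's route (Section~\ref{sec:hom:time}), and your identifications $M_0=N_{01}$, $M_{11}=-\tfrac12\triangle_\omega M_0$, $M_1=N_{11}-N_1$ agree with \eqref{eq:32}--\eqref{eq:M0:N0}. However, there is a genuine gap in the existence step. The "linear equation for $N_0$" obtained from $M_1=B-C$ is not solvable for arbitrary $M_1$: by \eqref{eq:N1} the dependence of $C$ on $N_0$ is through $\tfrac12\triangle_\omega N_0$, not through a nonzero scalar multiple of $N_0$, so the equation is the Poisson equation
\begin{equation}
  -\tfrac12\triangle_\omega N_0 \;=\; M_1+\tfrac12\triangle_\omega M_0-\tfrac12 M_0
\end{equation}
on $\mathbb{S}^2$. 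Since $\triangle_\omega$ annihilates constants and its range is the mean-zero functions, this is solvable only under the integrability condition $\overline{M_1}=\tfrac12\,\overline{M_0}$, and then only up to an additive constant in $N_0$ (which is invisible to $\partial_t\phi_1$, as $\partial_t(c/r)=0$). So the time-derivative ansatz alone misses a one-dimensional piece of the data: for general $(M_0,M_1)$ no choice of $(N_{01},N_0)$ works. Your claim of "unique solvability by the nonvanishing of the $N_0$-coefficient in $C$" conflates a nonzero operator with an invertible one.

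The paper repairs exactly this defect by adjoining the explicit spherically symmetric degree $-2$ solution $\phi_2=C/(r(r+t))$ of \eqref{eq:phi2:explicit}, which contributes $C/r^2$ to the constant coefficient and nothing to $M_0$ or $M_{11}$, and which is \emph{not} a time derivative of any degree $-1$ homogeneous solution (it corresponds to data $K=C$, $L=-C$, outside the range of \eqref{eq:hom:ext:onetime}). One first solves the Poisson equation with $M_1$ replaced by $M_1-\overline{M_1}+\tfrac12\overline{M_0}$ and then absorbs the leftover constant $C_1=\overline{M_1}-\tfrac12\overline{M_0}$ with this extra solution, as in \eqref{eq:36}. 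You need this additional ingredient (or an equivalent one) for the existence claim to hold for arbitrary $M_1$; your uniqueness sketch via the radial Frobenius analysis mode by mode is in the right spirit and is consistent with the two-dimensional solution space per spherical harmonic.
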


These results are proven in Sections~\ref{sec:hom:ext:one} \& \ref{sec:hom:ext:two}, using the representation formulas derived in Appendix~\ref{app:ext}.

\subsubsection{Scattering solutions with homogeneous asymptotics in the exterior and interior.}

In Section~\ref{sec:ext:homogeneous} the homogeneous solutions in the exterior are connected to scattering solutions with prescribed radiation fields. We prove this for the homogeneous wave equation
\begin{equation}
  \label{eq:wave:intro}
  \Box\phi=0\,,
\end{equation}
and show that such solutions necessarily asymptote to homogeneous solutions in the interior, as described in Section~\ref{sec:scattering:intro}.

\begin{theorem}[Scattering solutions with homogeneous asymptotics in the exterior]\label{thm:ext:scattering}
  Let $N_{01}(\omega)$, $N_0^{ext}(\omega)$, and $M_0^{ext}(\omega)$ be smooth functions on the sphere, and $C_0$ a constant.
  Suppose $\mathcal{H}_0(q,\omega)$ is a smooth function satisfying \[|(\langle q\rangle\partial_q)^k\partial_\omega^\alpha \mathcal{H}_0|\les \langle q\rangle^{-2}\,.\]
  Then there exists a solution $\phi$ to the homogeneous wave equation \eqref{eq:wave:intro} with the asymptotics
  in the wave zone,
  \begin{equation}
    \label{eq:phi:asym:hom}
 \phi\sim    \ln{\!\Big|\!\frac{2\,r}{\langle t\shortminus r{}_{\!}\rangle{}_{\!}}\Big|}
\frac{\mathcal{F}_{\!01\!}(r\!-\!t,\omega) \! \! }{r}+\frac{\!\mathcal{F}_{\!0}(r\!-\!t,\omega)\!\!}{r} \qquad (r\sim t,r\to\infty)\,,
  \end{equation}
   where $\mathcal{F}_{01}(q,\omega)=N_{01}(\omega)$ is a function of $\omega$ only, and the radiation field $\mathcal{F}_0$ is given by
  \begin{equation}
    \label{eq:F0:scattering}
    \mathcal{F}_0(q,\omega)=N_0^{ext}(\omega)\chi_{q>0}+M_0^{ext}(\omega) \frac{q}{\langle q\rangle^2} \:\chi_{q>0}
    +N_0^{int}(\omega)\chi_{q<0}+M_0^{int}(\omega)\frac{q}{\langle q\rangle^2}\: \chi_{q<0}+\mathcal{H}_0(q,\omega)\,,
  \end{equation}
  where $\chi_{q<0}$ and $\chi_{q>0}=1-\chi_{q<0}$ are smooth cutoff functions in $q$ as above, and
  \begin{equation}
    \label{eq:lim:ext:int:rel}
    N_0^{int}(\omega)=\frac{1}{2\pi}\int_{\mathbb{S}^2}\frac{N_{01}(\sigma)-N_{01}(\omega)}{1-\langle \sigma,\omega\rangle}\ud S(\sigma)\,,\qquad M_0^{int}(\omega)=M_0^{ext}(\omega)+C_0\,.
  \end{equation}
  Furthermore, the solution $\phi$ has  the interior homogeneous asymptotics  \eqref{eq:interior:leading:hom} (with $N_{01}$ as the source function), and  exterior homogeneous asymptotics given by Theorem~\ref{thm:hom:N}  (with $N_0=N_0^{ext}$).
\end{theorem}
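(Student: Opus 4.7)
The plan is to construct $\phi = \psi_{asym} + \psi_{rem}$ by superposing explicit homogeneous solutions in the exterior and interior of the lightcone, patched near the lightcone to an explicit wave-zone expansion that carries the free data $\mathcal{H}_0$. With a smooth cutoff $\chi = \chi(\tminusr/r^{1/2})$ localized near the lightcone, we would take
\[
 \psi_{asym} = \chi\,\psi_{rad} + (1-\chi)\bigl(\psi_{hom}^{ext} + \psi_{hom}^{int}\bigr),
\]
with the two homogeneous pieces supported in their respective wedges, and then solve for the remainder by convolving $-\Box\,\psi_{asym}$ against the backward fundamental solution, as in the proof of Theorem~\ref{thm:n}.

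The building blocks are assembled as follows. Theorem~\ref{thm:hom:N} applied with boundary data $(N_{01}, N_0^{ext})$ yields the exterior degree $-1$ solution $\phi_1^{ext}$, and Corollary~\ref{thm:hom:M} with prescribed leading coefficient $M_0^{ext}$ yields a degree $-2$ solution $\phi_2^{ext}$; together these form $\psi_{hom}^{ext} = \phi_1^{ext} + \phi_2^{ext}$. In the interior we take $\psi_{hom}^{int} = \phi_{1,\infty}[N_{01}] + \phi_{2,\infty}[M^{int}]$, defined by the explicit Poisson-type integrals as in \eqref{eq:interior:leading:hom} and \eqref{eq:phi:1:infty}, now viewing $N_{01}$ as playing the role of the ``total source'' $N$ from Theorem~\ref{thm:n}, with $M^{int}$ to be determined by matching. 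Finally, $\psi_{rad}$ is a second-order wave-zone expansion with logarithmic coefficient $\mathcal{F}_{01}(q,\omega) = N_{01}(\omega)$ and radiation field $\mathcal{F}_0$ of the form \eqref{eq:F0:scattering}, constructed in analogy with the $\psi_{rad}$ of Theorem~\ref{thm:n} but carrying both exterior and interior $q$-tails.

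The relations \eqref{eq:lim:ext:int:rel} would then be \emph{forced} by matching $\psi_{rad}$ to the homogeneous pieces as $r/t \to 1$. Splitting the integrand of $\phi_{1,\infty}[N_{01}]$ by subtracting and adding $N_{01}(\omega)$ at the pole $\sigma = \omega$ produces a leading logarithmic term whose coefficient matches $N_{01}(\omega)$, together with a regular $1/r$ tail proportional to $\int_{\mathbb{S}^2}\frac{N_{01}(\sigma) - N_{01}(\omega)}{1-\langle\omega,\sigma\rangle}\,\ud S(\sigma)$; equating this tail with the interior limit $\lim_{q\to -\infty}\mathcal{F}_0(q,\omega) = N_0^{int}(\omega)$ of $\psi_{rad}$ yields the first formula in \eqref{eq:lim:ext:int:rel}. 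The analogous expansion for $\phi_{2,\infty}[M^{int}]$ identifies $M_0^{int}$, with the one-parameter freedom $C_0$ accounting for the constant shift between the leading interior and exterior coefficients of the degree $-2$ homogeneous pieces, which are chosen independently on each side of the lightcone.

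The last step is the estimate of the remainder. The error $R_{asym} = \Box\,\psi_{asym}$ vanishes away from the cutoff transition and, in the transition region $\tminusr \sim r^{1/2}$, decays at a rate compatible with the assumed bound on $\mathcal{H}_0$; convolving $-R_{asym}$ with the backward fundamental solution then produces $\psi_{rem}$ decaying faster than any of the prescribed asymptotic terms, so that the structure \eqref{eq:phi:asym:hom} is preserved. The hard part will be the \emph{simultaneous} matching on both sides of the lightcone together with the cutoff-gluing estimates in the wave-zone transition: the log and $q/\langle q\rangle^2$ tails of $\psi_{rad}$ must align consistently with $\psi_{hom}^{ext}$ on one side and $\psi_{hom}^{int}$ on the other, which requires careful bookkeeping of the full second-order expansions and an application of the expansion machinery behind Theorem~\ref{thm:hom:N}.
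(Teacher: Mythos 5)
Your overall architecture is the same as the paper's: glue a second-order wave-zone expansion $\Psi_{rad}$ (with $\mathcal{F}_{01}=N_{01}$ constant in $q$ and $\mathcal{F}_0$ of the form \eqref{eq:F0:scattering}) to exterior and interior homogeneous solutions of degree $-1$ and $-2$ via a cutoff at $|r-t|\sim r^{1/2}$, read off \eqref{eq:lim:ext:int:rel} from the expansions of $\phi_{1,\infty}$ and $\phi_{2,\infty}$ towards the lightcone, and kill the error by convolution with $E_-$. That much is right, and the derivation you sketch of the first relation in \eqref{eq:lim:ext:int:rel} from the pole subtraction in $\phi_{1,\infty}[N_{01}]$ is exactly the paper's.

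The gap is at second order in $1/r$, which you defer to ``careful bookkeeping.'' Corollary~\ref{thm:hom:M} requires \emph{two} functions $(M_0^{ext},M_1^{ext})$ to determine the exterior degree $-2$ solution, and you never say how $M_1^{ext}$ is chosen; by contrast the interior degree $-2$ piece is determined by the single function $M_0^{int}$, which already fixes $M_1^{int}$ through \eqref{eq:M01:given}. The ODE $2\mathcal{F}_1'=-\mathcal{F}_{01}+2\triangle_\omega\mathcal{F}_{01}+\triangle_\omega\mathcal{F}_0+2(q/\langle q\rangle^2)\mathcal{F}_{11}+\dots$ must be integrated once, and requiring the constant (in $q$) part of $\mathcal{F}_1$ to match $M_1^{ext}$ as $q\to+\infty$ \emph{and} $M_1^{int}$ as $q\to-\infty$ produces the relation $\triangle_\omega\bigl[\int_{-\infty}^{\infty}\mathcal{H}_0\,\ud q\bigr]=2M_1^{ext}-2M_1^{int}+\pi\triangle_\omega N_{01}$. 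If you prescribe $M_1^{ext}$ independently, this is a compatibility condition on $\int\mathcal{H}_0\,\ud q$ and the theorem as stated (with $\mathcal{H}_0$ free) is false for your construction. The paper's resolution is to leave $M_1^{ext}$ unprescribed and \emph{define} it by this relation, so the radiation field determines the second-order exterior asymptotics; this is also where the constant $C_0$ earns its keep, since $M_0^{int}=M_0^{ext}+C_0$ shifts $\overline{M_1^{int}}$ by $C_0/4$. Without this step the gluing error $\Box\chi_a\cdot\Psi_{diff}+2Q(\partial\chi_a,\partial\Psi_{diff})$ does not decay fast enough, because $\mathcal{H}_1=\mathcal{F}_1-(\text{matched linear part})$ would fail to be $O(\langle q\rangle^{-\kappa+1})$ on one side, and the remainder estimate in the transition region breaks down.
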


\begin{remark}
  In \cite{LS20} we have shown that for any smooth function $\mathcal{F}_0$ satisfying $|(\langle q\rangle\partial_q)^k\partial_\omega^\alpha \mathcal{F}_0|\les \langle q\rangle^{-\gamma}$, $1/2<\gamma<1$ there exists a solution $\phi$ to \eqref{eq:wave:intro} with the asymptotics $\phi\sim\mathcal{F}_0(r-t,\omega)/r$  in the wave zone.
  This does not cover the case of slowly decaying data. Indeed,  initial data that decays like $r^{-1}$ corresponds to $\gamma=0$.
  Theorem~\ref{thm:ext:scattering}  covers all cases related to slowly  decaying data and gives a complete characterisation of the asymptotics  of the solution  towards time-like and space-like infinity at this order.
\end{remark}

\begin{remark}
  Friedlander showed in \cite{F80} that for all finite energy solutions $\phi$ to \eqref{eq:wave:intro:data},
  with $\|\phi\|_{\mathcal{H}_E}^2=\int |\nabla f|^2+g^2 \ud x<\infty$, the time-derivative of $\phi$ has a radiation field,
  \begin{equation}
    \label{eq:41}
    \partial_t\phi\sim\frac{\mathcal{G}_0(r-t,\omega)}{r}\,,\text{ and }\qquad \int_{-\infty}^{\infty} \int_{\mathbb{S}^2} \mathcal{G}_0(q,\omega)^2\ud S(\omega) \ud q \les \|\phi\|_{\mathcal{H}_E}^2\,.
  \end{equation}
This is consistent with the expansion \eqref{eq:phi:asym:hom}, where $\mathcal{F}_{01}(q,\omega)=N_{01}(\omega)$,
which after differentiating corresponds to $\mathcal{G}_0(q,\omega)\sim N_{01}(\omega)\frac{q}{\langle q \rangle^2} -\partial_q\mathcal{F}_0(q,\omega)$.
\end{remark}

\subsubsection{Compatibility condition.}

For solutions to the forward problem \eqref{eq:wave:intro:data} with fast decaying data,
we see from \eqref{eq:F0:Radon} that ${\mathcal F}_0(q,\omega)$ cannot be an arbitrary function of compact support in $q$ but has to satisfy a certain compatibility condition since
\beq\label{eq:F0:comp}
\int {\mathcal F}_0(q,\omega)\, dq= \int g(y)\, dy
\eq
is independent of $\omega$.
The compatibility conditions \eqref{eq:thm:n:compatibility} and \eqref{eq:thm:m:compatibility} stated in Theorem~\ref{thm:n},~\ref{thm:m} can be viewed as generalisations of \eqref{eq:F0:comp} to equations with sources. In our proof these compatibility conditions arise as a result of matching conditions, and are a consequence in Theorem~\ref{thm:n}~\&~\ref{thm:m} of our choice  to match to trivial exterior solutions,\footnote{In particular, the stated compatibility conditions ensure that the asymptotic solution is trivial in the exterior up to homogeneous degree $-2$. It does \emph{not} imply conversely that the solution is compactly supported at $t=0$.}
as indicated for example by \eqref{eq:F0:ext:intro}.

In Theorem~\ref{thm:ext:scattering} the compatibility condition is absent because the solution is now allowed to have non-trivial exterior asymptotics, as indicated by \eqref{eq:F0:ext:N:intro}. Note that in Theorem~\ref{thm:ext:scattering} we do not  prescribe the homogeneous degree $-2$ solution in the exterior.

Here it is useful to consider the following alternative formulation of Theorem~\ref{thm:ext:scattering}:
If in addition to $N_{01}$ and $N_0^{ext}$, and $M_0^{ext}$ we also prescribe the function $M_1^{ext}$,
then the exterior homogeneous solutions are fully determined by Theorem~\ref{thm:hom:N} and Corollary~\ref{thm:hom:M}.
While our proof allows us to match to any homogeneous solutions in the exterior, this produces a compatibility condition of the form
\begin{equation}
  \label{eq:H0:compatibility}
  \int_{-\infty}^{\infty} \mathcal{H}_0(q,\omega)\ud q=\mathcal{P}(\omega)
\end{equation}
where $\mathcal{P}(\omega)$ is the solution of an elliptic problem on $\mathbb{S}^2$ determined by the four functions on the sphere above. Moreover in the case of fast decaying radiation fields, \footnote{The radiation field $\mathcal{F}_0$ may decay fast towards the exterior, but show no decay towards the interior: this is the case when $N_0^{ext}=0$ and $M_0^{ext}=0$, but $N_0^{int}$ and $M_0^{int}$ do not vanish. The radiation field $\mathcal{F}_0$ only decays towards the interior if the integral for $N_0^{int}$ in \eqref{eq:lim:ext:int:rel} vanishes, so in particular if $N_{01}=0$, and if $C_0=0$ in \eqref{eq:lim:ext:int:rel}. In this case the asymptotic solution has trivial homogeneous degree $-1$ and $-2$ behaviour \emph{both} in the exterior and interior, but $\mathcal{F}_0=\mathcal{H}_0$ is subject to the stated compatibility condition.}
namely when $\mathcal{F}_0=\mathcal{H}_0$, the compatibility condition \eqref{eq:H0:compatibility} reduces to the statement that $\mathcal{P}(\omega)$ is independent of $\omega$.

The statement of Theorem~\ref{thm:ext:scattering} avoids the compatibility condition altogether by not prescribing the function $M_1^{ext}(\omega)$. In this way the exterior homogeneous solution is determined by the radiation field $\mathcal{H}_0$.
In a similar manner the compatibility conditions \eqref{eq:thm:n:compatibility} and \eqref{eq:thm:m:compatibility} can be removed by allowing non-trivial exterior asymptotics, as in Theorem~\ref{thm:hom:N} and Corollary~\ref{thm:hom:M}, determined by   the radiation field $\mathcal{F}_0$.

\subsection{Scattering problems for semilinear equations.}
\label{sec:motiv}

The source terms discussed in Sections~\ref{sec:intro:quadratic} and~\ref{sec:cubic:intro}
are motivated by the scattering problem for  wave equations satisfying the null or  weak null condition.

 Consider, for example, the problem of constructing solutions backwards in time from infinity for the system
\begin{equation}
  \label{eq:weak:2}
  \Box\, \phi = (\partial_t\psi)^2\qquad \Box\, \psi=0\,.
\end{equation}
Even in the simplest case, for solutions $\psi$ 
with decaying radiation field $\mathcal{F}_0$, when
\[\psi(t,x)\sim \mathcal{F}_0(r-t,\omega)/r+\mathcal{F}_1(r-t,\omega)/r^2\] in the wave zone $t\sim r$, we see after inserting in the first equation in \eqref{eq:weak:2},
that $\psi$ obeys
\begin{equation}
   \Box\phi\sim\frac{(\mathcal{F}_0')^2}{r^2}+\frac{2\mathcal{F}_0'\mathcal{F}_1'}{r^3} \qquad (t\sim r)\,.
\end{equation}
This is precisely the motivation for studying the equations \eqref{eq:sourcbegining} and \eqref{eq:sourcbegining:cubic}.
Here the sources $n(q,\omega)\!\sim\! (\mathcal{F}_0')^2(q,\omega)$ and $m(q,\omega)\!\sim\!2\mathcal{F}_0'(q,\omega)\mathcal{F}_1'(q,\omega)$  are decaying  in $q\!=\!r\shortminus t$,
namely for $|\mathcal{F}_0|\les 1/\langle q\rangle^{\gamma}$:
\begin{equation}
|n(q,\omega)|\les \langle q\rangle^{-2-2\gamma}\,,\qquad |m(q,\omega)|\les \langle q\rangle^{-1-2\gamma}\,.
\end{equation}

\begin{remark}
The system \eqref{eq:weak:2} is  the simplest example of a system of wave equations satisfying the weak null condition, see \cite{LR03}, and is itself a model of the Einstein equations in harmonic coordinates \cite{LR05,LR10,L17}. We refer the reader to the introduction of \cite{LS20}, for a discussion of the asymptotics established in \cite{L17}, and the relevant \emph{semi-linear} model problems in this setting.
\end{remark}

\subsubsection{Scattering with weak null condition in Sobolev regularity.}

The ideas in this paper can in principle be combined with our approach in \cite{LS20} to obtain  scattering results in $L^2$ spaces for non-linear systems satisfying the null condition or weak null condition.
We illustrate this for \eqref{eq:weak:2} providing an alternative construction to \cite[Section 7]{LS20}.

In \cite[Section 4]{LS20} we have solved the linear equation $\Box\psi=0$ from infinity with radiation fields $\mathcal{G}_0$ in suitable weighted Sobolev spaces on $\mathbb{R}\times\mathbb{S}^2$,
satisfying $\|\mathcal{G}_0\|_{N,\gamma-1/2}<\infty$  for some $1/2<\gamma<1$; see \cite[Theorem 1.1]{LS20} for the definition of the norms, and precise statements.
This is the setting of decaying radiation fields, without homogeneous solutions in the interior and exterior.

In the proof we used an asymptotic solution $\Psi_{asym}$ of the form
  \begin{equation}
    \Psi_{asym}=\Psi_{rad}\: \chi\bigl(\tfrac{\langle t-r\rangle}{r}\bigr)\,,\qquad \Psi_{rad}=\frac{\mathcal{G}_0(r-t,\omega)}{r}+\frac{\mathcal{G}_1(r-t,\omega)}{r^2}\,,
  \end{equation}
  and  obtained  a scattering solution $\psi=\Psi_{asym}+\psi_{rem}$ with trivial data  for the remainder $\psi_{rem}$ as $t\to\infty$.
  The remainder then satisfies the estimates of \cite[Proposition 4.8]{LS20}.
  In particular, $\psi_{rem}$ does not have a radiation field,
  \begin{equation}
     |\psi_{rem}|\lesssim \frac{\|\mathcal{G}_0\|_{5,\gamma-1/2}}{\langle t+r\rangle \langle t\rangle^{\gamma-\gamma'}\langle t-r\rangle^{\gamma'}}\qquad (\gamma'<\gamma)
  \end{equation}
  and hence $\psi(t,x)\sim \mathcal{G}_0(r-t,\omega)/r$ in the wave zone when $r\sim t$.

Inserting this solution in the equation for $\phi$ in \eqref{eq:weak:2} we obtain
  \begin{equation}
    \begin{split}
      \Box\,\phi=& (\partial_t\psi)^2=(\partial_t\Psi_{asym})^2+2\partial_t\Psi_{rad}\: \partial_t \psi_{rem}+(\partial_t\psi_{rem})^2\\
      =& (\partial_t\Psi_{rad}\:\chi)^2+(\partial_t\Psi_{asym})^2-(\partial_t\Psi_{rad}\:\chi)^2+2\partial_t\Psi_{rad}\: \partial_t \psi_{rem}+(\partial_t\psi_{rem})^2\,.
    \end{split}
  \end{equation}

    The main term we have added and subtracted on the RHS contains the relevant source terms,
  \begin{equation}
    (\partial_t\Psi_{rad}\:\chi)^2=\frac{n}{r^2}\chi+\frac{m}{r^3}\chi+\frac{l}{r^4}\chi
 \end{equation}
 for which we  construct an asymptotic solution in \emph{this paper}. Here
 \begin{equation}
   n(q,\omega)=(\mathcal{F}_0')^2(q,\omega)\,, \quad m(q,\omega)=2\mathcal{F}_0'(q,\omega)\mathcal{F}_1'(q,\omega)\,,\quad l(q,\omega)=(\mathcal{F}_1')^2(q,\omega)\,,
 \end{equation}
and so in particular $n(q,\omega)\les \langle q\rangle^{-2-2\gamma}$.
 For such quadratic sources $n$ and cubic sources $m$ we construct an asymptotic solution $\Phi_{asym}$ so that
\begin{equation}
  \Box \Phi_{asym}+\frac{n(r-t,\omega)}{r^2}\chi+\frac{m(r-t,\omega)}{r^3}\chi=R_{asym}
\end{equation}
with a free radiation field $\mathcal{H}_0$  as discussed in Theorem~\ref{thm:n} and Theorem~\ref{thm:m}.
This is the main difference from \cite[Section 7]{LS20}, where an auxiliary forward solution was invoked to deal with these terms.

Now with this choice of the asymptotic solution $\Phi_{asym}$, the remainder $\phi_{rem}=\Phi_{asym}+\phi$ satisfies
\begin{equation}
  \Box\phi_{rem}=R_{asym}+\frac{l}{r^4}\chi+(\partial_t\Psi_{asym})^2-(\partial_t\Psi_{rad}\:\chi)^2+2\partial_t\Psi_{rad}\: \partial_t \psi_{rem}+(\partial_t\psi_{rem})^2\,.
\end{equation}
We have already described in \cite[Section 7.1]{LS20} how the results in \cite{LS20} are applied to estimate most of these error  terms in $L^2$.
We have seen that the bilinear estimate of \cite[Lemma 6.4]{LS20} gives
 \begin{equation}
   \|(\partial_t\Psi_{asym})^2(t,\cdot)-(\partial_t\Psi_{rad}\:\chi)^2(t,\cdot)\|_{L^2_x}\lesssim \frac{\|\mathcal{G}_0\|_{8,\gamma-1/2}^2}{\langle t\rangle^{3/2+\gamma}}\,.
 \end{equation}
Furthermore using \cite[Lemma 6.9]{LS20} and \cite[Proposition 4.6]{LS20} we have
 \begin{equation}
   \| \partial_t\Psi_{rad}\: \partial_t \psi_{rem}(t,\cdot)+(\partial_t\psi_{rem})^2(t,\cdot) \|_{L^2_x} \lesssim \frac{\|\mathcal{G}_0\|_{8,\gamma-1/2}^2}{\langle t\rangle^{3/2+\gamma}}\,,
 \end{equation}
and suitable higher order versions of these estimates.

 It remains to estimate $R_{asym}$  in $L^2$.
 The asymptotic error term $R_{asym}$ is given -- for an asymptotic solution with homogeneous interior solution --  by \eqref{eq:R:asym:int},
 and estimated in this paper in $L^\infty$; see Appendix~\ref{sec:RadialEstimates}.
 In principle, it would also be possible to derive an estimate in $L^2$, such as
 \begin{equation}
    \| R_{asym}(t,\cdot) \|_{L^2_x}\les \frac{D}{\langle t\rangle^{2-\epsilon}} \qquad (\epsilon>0)
  \end{equation}
  (and suitable higher order versions)
  where $D$ depends on the weighted Sobolev norms of $\mathcal{G}_0$ and $\mathcal{H}_0$. However, we do not pursue this here because it is not the emphasis of this paper.

  A standard energy estimate as in \cite[Section 4.2]{LS20} then shows that with trivial data as $t\to\infty$, the remainder $\phi_{rem}$ does not have a radiation field, while $\phi$ has a radiation field as presented in Theorems~\ref{thm:n}, \ref{thm:m}.
  In this way it possible to prove $L^2$ versions of the theorems presented in this paper, with the free radiation fields in weighted Sobolev spaces.

\subsubsection{Null condition.}

For nonlinear wave equations with classical null condition the interior solution corresponding to cubic source terms discussed in Section~\ref{sec:cubic:intro} is relevant. While for the equation
\begin{equation}
  \label{eq:108}
  \Box\phi=Q_0(\partial \phi,\partial \phi)=-(\partial_t\phi)^2+|\nabla\phi|^2
\end{equation}
we find that \eqref{eq:wavezone:cubic:intro} produces a cubic source term $m$, it does not lead to a homogeneous solution in the interior:
\begin{equation}
  m(q,\omega)=2\mathcal{G}_0(q,\omega)\mathcal{G}_0'(q,\omega)\,,\qquad M(\omega)=\int_{-\infty}^{\infty}\!\!\!\!m(q,\omega)\ud q=\mathcal{G}_0^2\rvert^\infty_{-\infty}=0\,.
\end{equation}
However, more generally for systems, there is a non-vanishing homogeneous solution in the interior:
\begin{equation}
  \Box\phi=Q(\partial \phi,\partial \psi)\,,\qquad \Box\psi=Q(\partial \phi,\partial \psi)\,.
\end{equation}
Indeed, with $\phi\sim\mathcal{F}_0(r-t,\omega)/r$, and $\psi\sim\mathcal{G}_0(r-t,\omega)/r$ we find
\begin{equation}
  Q_0(\partial \phi,\partial \psi)\sim -\frac{\mathcal{F}_0'\mathcal{G}_0+\mathcal{F}_0\mathcal{G}_0'}{r^3} \qquad (r\sim t,r\to\infty)
\end{equation}
and similarly for $Q_{\alpha\beta}=\partial_\alpha\phi\,\partial_\beta\psi-\partial_\beta\phi\,\partial_\alpha\psi$ $(\alpha\neq \beta)$. These cubic source terms lead to interior homogeneous asymptotics of the form \eqref{eq:Phi3:intro},
and explain in particular the $1/q$ tail in the radiation field, cf.~\eqref{eq:G0:tail},
which were observed in \cite{C86,K86}.

\subsection{Further comments on related problems.}

\subsubsection{Asymptotic expansions.}

The asymptotic expansions of the wave equation that are so central to this paper, namely the expansions in $1/r$ and $t-r$ in the wave zone, as well as the expansion in $(r-t)/r$ and $1/r$ in the interior,
feature also in the microlocal  analysis of wave operators \cite{BVW15,BVW18,HV17,HV23,H23a}.
In  \cite{BVW15,BVW18}, Baskin, Vasy and Wunsch established, for a class of spacetimes, asymptotic expansions of solutions to the wave equation in all asymptotic regimes, that includes in particular  an asymptotic expansion of the radiation field. Homogeneity of degree $-2$ under scalings in the interior plays a role here as well \cite{BVW15}. Polyhomogeneous expansions have also been established for Einstein's equations, in the context of the stability of Minkowski space, by Hintz and Vasy in \cite{HV17}. More recently,  a microlocal approach for the analysis near null infinity has been further developed by Hintz and Vasy \cite{HV23}.
In these settings, the existence of expansions is related to a notion regularity (``edge-b regularity'') in the radial compactification of the space-time, and the above variables appear in the blow-up of null infinity in this picture; see \cite{HV23} (Figure~1.2).
Moreover, Hintz applied this framework to obtain asymptotics of wave equations with inverse square potentials, on non-stationary spacetimes in \cite{H23a}; see also \cite{BGRM22}.

The interplay between the precise form of the polyhomogeneous expansion near null infinity, and late time tails of solutions to the wave equation  towards time-like infinity, have also been studied in the context of Price's law on black hole spacetimes \cite{AAG23,H20,K22b}. The physical space methods developed in this context \cite{DR09,S13,AAG18a}, have also led Gajic to derive late-time asymptotics for geometric wave equations with inverse square potentials \cite{G22}. Keir used  $r^p$-weighted estimates to show global existence for wave equations satisfying the weak null condition \cite{K18}.
Further results on asymptotics that are relevant to this paper include \cite{CKL19,K17,L17,H21}.

\subsubsection{Scattering.}
\label{sec:further}

In \cite{LS05a}, Lindblad and Soffer solved the scattering problem for a non-linear Klein-Gordon equation in one dimension, and showed the existence of solutions with prescribed asymptotics at infinity, of a form previously derived by Delort \cite{D01}; see also \cite{LS05b,LS06b,LLS20}.

In \cite{W14} Wang studied the scattering problem for Einstein's equations in space dimensions $4$ or higher.

In \cite{LS20} we approached the scattering problem for a class of semi-linear wave equations and showed the existence of solutions from scattering data at infinity, which was assumed to decay along null infinity. We showed that approximate solutions can be constructed for non-linear wave equations satisfying the null condition, and used a version of the fractional Morawetz estimate, see \cite{LS06b}, to control the remainder for the backwards problem, and showed decay of the solution at a rate corresponding to the decay of the radiation field. In the talk \cite{H21talk} we announced some of our results on scattering with homogeneous asymptotics in the interior.

Since then this approach has been applied and extended in several directions:
Lili He proved a scattering result for the Maxwell Klein Gordon-equations \cite{H21}.
Solutions to this system have  slower decay in the interior than those treated in \cite{LS20} and to prove existence from infinity it is necessary to subtract the leading order homogeneous interior asymptotics. In fact, a suitable model for the weak null structure of this system is, as discussed in \cite{H21},
\begin{equation}
  \label{eq:MKG}
  \Box \phi = i \psi \partial_t\phi\,,\qquad \Box \psi =0\,,\qquad \Box \underline{\psi}=\textrm{Im}\bigl(\phi\overline{\partial_t\phi}\bigr)\,
\end{equation}
where $\phi$ is complex valued, and $(\psi,\underline{\psi})$ should be thought of as components of the electromagnetic potential.
This system is also instructive to illustrate how our  results in this paper can be applied:
In the presence of charges $Q$, $\psi$ is has exterior homogeneous asymptotics and can be construced as in Theorem~\ref{thm:ext:scattering}, leading to a solution with a radiation field that does not decay, $\psi\sim Q/r$. The scalar field has asymptotics $\phi\sim \exp[-iQ\ln(r)]\Phi_0(r-t,\omega)/r$, and thus produces a quadratic source term in the equation for $\underline{\psi}$, leading to a scattering problem with interior homogeneous asymptotics as discussed in Section~\ref{sec:intro:quadratic},
\begin{equation}
  \label{eq:60}
  \Box\underline{\psi}\sim n/r^2\,,\qquad n(q,\omega)=\textrm{Im}(\Phi_0(q,\omega)\overline{\partial_q\Phi_0(q,\omega)})\,.
\end{equation}

Dongxiao Yu proved a scattering result for a class of quasi-linear wave equations  \cite{Y20,Y22}, with compactly supported scattering data,  by adapting the construction of the approximate solution to fit the true characteristics of the equation, asymptotically. (See \cite{L92,L08} for the  forward in time  results.)  A similar correction, using the asymptotic solutions to the eikonal equation, is also relevant for Einstein's equations in \cite{L17}.

In \cite{LC23}, Chen and Lindblad  obtained scattering results for  coupled wave Klein-Gordon systems:
\begin{equation}\label{semilinearWLKGsystem}
-\Box u=(\pa_t \phi)^2+\phi^2,\qquad -\Box \phi+\phi=u\phi,
\end{equation}
in a setting where the interior asymptotics of the Klein-Gordon field affects the asymptotics for the wave
equation in the interior and the asymptotics of the wave equation cause a logarithmic correction
to the phase of the Klein-Gordon field. With $\rho=\sqrt{t^2-|x|^2}$ and $y=x/t$  it was proven that
\beq
u\sim U(y)/\rho,\quad t/r>1,\quad\text{and}\quad  u\sim \mathcal{F}_0(t-r,\omega)/r,\quad t\sim r,\
\qquad \text{as}\quad t\to\infty,
\eq
and
\beq
 \phi\sim\rho^{-\frac 32}\bigtwo(e^{i\rho-\frac i2 U(y)\ln \rho} a_+(y)+e^{-i\rho+\frac i2 U(y)\ln \rho} a_-(y)\bigtwo),
\eq
where $a_{\pm}(y)$ decay as $|y|\to 1$, and
\begin{equation}
    -\Box(\frac{U(y)}\rho)=2\rho^{-3} (1+(1-|y|^2)^{-1}) a_+(y) a_-(y).
\end{equation}
Note that the correction to the asymptotics of $u$ in the interior is exactly
a homogeneous function of degree minus one as for the wave equation with
an inhomogeneous term satisfying the weak null condition.
Although the right hand side of the first equation in \eqref{eq:weak:2} only decay like $r^{-2}$ it is focused close to the light cone $r\sim t$, whereas
the right hand side of the first equation in \eqref{semilinearWLKGsystem} decay like $r^{-3}$ but is spread out everywhere in $r<t$. From a global perspective the interior does not see the fine structure along the light cone, both right hand sides are homogeneous of degree minus three, cf.~\eqref{eq:phiinfinityformula}, and so it makes sense that the solution of the wave equation is homogeneous of degree minus one asymptotically.

A microlocal approach to the scattering problem, for an equation with asymptotics towards time-like infinity, is given for the  Schr\"odinger equation in~\cite{GRGH22}.
Finally, for the wave equation on black hole exteriors a scattering theory is established in  \cite{DRS18}.

\subsection{Structure of the paper}
In Section~\ref{sec:hom:int} we introduce the homogeneous solutions that capture the interior asymptotics of \eqref{eq:sourcbegining} and \eqref{eq:sourcbegining:cubic}. In fact, they arise in the homogeneous scaling limit of these equations, namely as distributional solutions of wave equations with sources supported on the lightcone.
In Section~\ref{sec:story} we begin to reverse engineer the solution to \eqref{eq:sourcbegining} and start with deriving the asymptotics near the lightcone. We discuss the differential equations and matching conditions that have to be satisfied by the radiation fields in order to obtain an approximate solution that matches with a compatible homogeneous solution in the interior.
In Section~\ref{sec:interiorexpansion} we derive the expansion of the homogeneous solutions near the lightcone, and the functions that enter the matching conditions. We also derive the equations that relate the coefficients in this expansion near the light cone.
In Section~\ref{sec:matching} we carry out the matching procedure of the expansion at null infinity to the interior asymptotics determined by the source, and derive in the process the compatibility conditions for the scattering data.  In particular, we construct the asymptotic solution  $\psi_{asym}$ and  solve \eqref{eq:remsol}  for the faster decaying remainder from infinity, in $L^\infty$ norms.
In Section~\ref{sec:hom:ext} we turn to the discussion of the homogeneous solutions in the exterior. Finally in Section~\ref{sec:ext:homogeneous} we match the exterior homogeneous asymptotics to an expansion in the wave zone, and thus produce scattering solutions for the homogeneous wave equation with with radiation fields that do not decay.

Appendix~\ref{app:int} is for the interior solutions, and contains the calculation of the spherical integrals needed to get an expansion of the homogeneous solutions, and also give the radial $L^\infty$  estimates needed to control the remainder.
Appendix~\ref{app:ext} is for the exterior problem, and contains the calculation of the homogeneous solutions in the exterior, and a discussion of the invertible transformations that appear in this context.

\paragraph{Acknowledgements.} We would like to thank Mike Eastwood, Lionel Mason, Gabriel Paternain and Richard Melrose for helpful discussions. We would also like to thank the Mittag-Leffler Institute for their hospitality in the Fall Semester 2019. H.L. was supported in part by Simons Collaboration Grant 638955.

\section{Interior homogeneous solutions to the homogeneous wave equation}
\label{sec:hom:int}

In this section we introduce the homogeneous solutions  that capture the leading order homogeneous asymptotics in the interior of the light cone $t>|x|$ for wave equations with sources on the light cone.

\subsection{Asymptotics with sources on light cones: quadratic terms.}\label{sec:interior}

The interior asymptotics of forward solutions to the wave equation with sources on light cones were studied in Lindblad \cite{L90a}. We begin with a derivation of the interior asymptotics for the equation
\begin{equation}
  \label{eq:box:n:2}
-\Box \,\phi=n(r\!-\!t,\omega)/r^2,
\end{equation}
where $n(q,\omega)$ is concentrated close to the light cone $q\sim 0$.
More precisely, we derive an expansion in homogeneous functions to second order in the interior of the lightcone.

\subsubsection{Highest order interior asymptotics.}

The forward problem for this equation has homogeneous asymptotics
\begin{equation}\label{eq:phiasymptotic}
\phi(t,x)\sim \phi_{1,\infty}(t,x),\qquad t>|x|,\quad t\to \infty,\quad \text{where}\quad  \phi_{1,\infty}(at,ax)= \phi_{1,\infty}(t,x)/a.
\end{equation}
In fact, the following scaling argument shows that, in the interior $t/r>1$,  $\phi$ is asymptotic to a solution $\phi_{1,\infty}$ of a wave equation with a homogeneous source term on the light cone:
\begin{equation}
\label{eq:phiinfinityformula}
-\Box \,\phi_{1,\infty}=N(\omega)\,\delta(r\!-\!t)/r^2.
\end{equation}
To see this, note that if $\phi$ solves \eqref{eq:box:n:2} then the rescaled solution $\phi_{1,a}(t,x)=a\,\phi(at,ax)$ satisfies
\begin{equation*}
-\Box \,\phi_{1,a}=n_a(r\!-\!t,\omega)/r^2, \qquad n_a(q,\omega)=a\, n(aq,\omega).
\end{equation*}
As $a\to\infty$, in the sense of distribution theory\footnote{This is the statement that for $\varphi\in\mathcal{S}(\mathbb{R})$, $\lim_{a\to\infty}\int_{-\infty}^{\infty} n_a(q,\omega) \varphi(q)\ud q=\varphi(0)N(\omega)$, which can be verified in an elementary way.}

\begin{equation*}
n_a(q,\omega)=a\, n(aq,\omega)\to \delta(q)N(\omega),\qquad\text{where}\quad N(\omega)= \int_{-\infty}^{+\infty}\!\!\! n(q,\omega) \, dq,
\end{equation*}
and $ \delta(q)$ is the delta function, provided that $|n(q,\omega)|\lesssim \langle q\rangle^{-1-\epsilon}$, $\epsilon>0$.

Since $\phi_{1,a}=E_+\ast F_a$ for $t>r$,
where $E_+$ is the forward fundamental solution of $-\Box$ and $F_a=n_a(r-t,\omega)/r^2$, and $F_a\to \mu_1$ in the sense of distributions as $a\to \infty$\footnote{This is seen by multiplying by a test function  and changing variables $t=r\!+\!q$, $x=r\omega$ in the resulting integral. In fact the distribution $\delta(r\shortminus t)$ is defined as a composition of the delta function with this change of variables.},
where $\mu_1=N(\omega) \delta(r-t)/r^2$,
it follows that when $t>r$, $\phi_{1,a}=E_+\ast F_a\to \phi_{1,\infty}=E_+\ast \mu_1$ as $a\to \infty$.
This is the statement \eqref{eq:phiinfinityformula}.
Since $\mu_1$ is a homogeneous distribution of  degree $\!-3$, $\phi_{1,\infty}$ is homogeneous of degree $\!-1$.

\begin{remark} By the statement \eqref{eq:phiasymptotic} that $\phi(t,x)\sim\phi_{1,\infty}(t,x)$ we mean that
\beq
a\phi(at,ax)\to \phi_{1,\infty}(t,x), \qquad \text{as}\quad a\to \infty
\eq
in the sense of distributions in the set $t>r$. This does not directly imply a pointwise rate of convergence with estimates, but it gives, in a very simple way, what the forward asymptotics have to be and this is what we need to know to construct solutions to the backward scattering problem  from infinity.
\end{remark}

\subsubsection{Second order interior asymptotics.}\label{sec:interior:lower}
There are homogeneous forward asymptotics   in the interior also to next order if $n$ decays sufficiently fast away from the light cone.
We show by a scaling argument that
\begin{equation*}
\phi_{\,2}(t,x)=\phi(t,x)-\phi_{1,\infty}(t,x)\sim \phi_{\,2,\infty}(t,x),\qquad t>|x|,\quad t\to \infty,\quad \text{where}\quad  \phi_{\,2,\infty}(at,ax)= \phi_{\,2,\infty}(t,x)/a^2.
\end{equation*}
In fact, if $\phi$ solves \eqref{eq:box:n:2} then $\phi_{\,2,a}(t,x)=a^2\,\phi_{\,2}(at,ax)$ satisfies
\begin{equation*}
-\Box \,\phi_{\,2,a}=m_a(r\!-\!t,\omega)/r^2, \qquad m_a(q,\omega)=a\big(a\, n(aq,\omega)-\delta(q)N(\omega)\big).
\end{equation*}
Moreover as $a\to\infty$, in the sense of distribution theory
\begin{equation*}
m_a(q,\omega)\to -\delta^{\,\prime\!}(q)M(\omega),\qquad\text{where}\quad M(\omega)= \int_{-\infty}^{+\infty}\!\!\! q\,  n(q,\omega) \, dq,
\end{equation*}
and $ \delta^{\,\prime\!}(q)$ is the distributional derivative of the delta function,
 provided that we have additional decay $|n(q,\omega)|\lesssim \langle q\rangle^{-2-\epsilon}$, $\epsilon>0$.
Hence
$\phi_{\,2,a}\to \phi_{\,2,\infty}$ as $a\to\infty$ where
\begin{equation}\label{eq:phi2infinityformula}
-\Box \,\phi_{\,2,\infty}=-M(\omega)\,\delta^{\,\prime}(r\!-\!t)/r^2\,.
\end{equation}
Since this distribution is homogeneous of degree $\!-4$, $\phi_{\,2,\infty}$ is homogeneous of degree $\!-2$.

\subsubsection{Asymptotics towards the light cone for the leading order homogeneous solution.}\label{sec:hom:asymptotics:leading}
We claim that $\phi_{1,\infty}$ has an asymptotic expansion as we approach the light cone.
To see this we use a formula from  \cite{L90a} for the solution of \eqref{eq:phiinfinityformula} obtained by convolving with the forward fundamental solution 
of $-\Box$:
\begin{equation}\label{eq:wavesourceconeformulasec2.3}
 \phi_{1,\infty}(t,r\omega)=\Psi_1[N](t,r\omega)=\frac{1}{\!4\pi }\!\int_{\mathbb{S}^2}\!\!
  \frac{N(\sigma{}_{\!})\, dS(\sigma) }{t\!-\!\langle\sigma,r\omega{}_{\!}\rangle}.
\end{equation}
Writing
\begin{equation}
r\Psi_1[N](t,r\omega)=\frac{1}{\!4\pi }\!\int_{\mathbb{S}^2}\!\!
  \frac{N(\sigma{}_{\!})\, dS(\sigma) }{t/r\!-\!\langle\sigma,\omega{}_{\!}\rangle}
  =\frac{1}{\!4\pi }\!\int_{\mathbb{S}^2}\!\!
  \frac{N(\omega{}_{\!})\, dS(\sigma) }{t/r\!-\!\langle\sigma,\omega{}_{\!}\rangle}
  +\frac{1}{\!4\pi }\!\int_{\mathbb{S}^2}\!\!
  \frac{\big(N{}_{\!}(\sigma{}_{\!})\!-\!N{}_{\!}(\omega{}_{\!})\big)\,\ud S(\sigma{}_{\!})\!}{t/r\!-\!\langle\sigma,\omega{}_{\!}\rangle},
\end{equation}
the first integral can be explicitly calculated,
and the second converges as $t/r\to 1$ since the singularity is cancelled.
In fact, by a rotation of the cartesian coordinates $(\sigma_1,\sigma_2,\sigma_3)$,  we can align the $\sigma_1$-axis with $\omega$ so that $\omega=(1,0,0)$.
  Now introducing spherical coordinates $(\sigma_1=\cos\theta,\sigma_2=\sin\theta\cos\phi,\sigma_3=\sin\theta\sin\phi)$ we see that the volume form on $\mathbb{S}^2$ can be expressed as:
  $\ud S(\sigma)=\sin\theta\ud\theta\ud\phi=\ud\phi\ud \sigma_1$.
  Hence we compute for the first term,
\beq
\frac{1}{\!4\pi }\!\int_{\mathbb{S}^2}
  \frac{\, dS(\sigma) }{t/r\!-\!\langle\sigma,\omega{}_{\!}\rangle}
  =\frac{1}{2 }\!\int_{-1}^{1}
  \frac{\, d\sigma_1 }{t/r\!-\!\sigma_1}=\frac{1}{2}\ln{\Big|\frac{t/r+1}{t/r-1}\Big|}\,,
\eq
and we obtain to leading order as $t/r\to 1$,
\begin{equation}\label{eq:lowestinteriorasymptotics}
 r\Psi_1[N](t,r\omega)
 \sim \frac{N(\omega)}{2}  \ln\Big|\frac{t+r}{t\!-\!r}\Big|
 +\frac{1}{\!4\pi }\!\int_{\mathbb{S}^2}\!\!
  \frac{\big(N{}_{\!}(\sigma{}_{\!})\!-\!N{}_{\!}(\omega{}_{\!})\big)\,\ud S(\sigma{}_{\!})\!}{1\!-\!\langle\sigma,\omega{}_{\!}\rangle}\,,
\end{equation}
where the second term is bounded; see the discussion of spherical integrals in Appendix~\ref{app:nonsingular}.
Thus for future reference we note (using the notation of Section~\ref{sec:homoexpansion})
that the leading order coefficients in the expansion of the homogeneous degree $-1$ solution are:
\begin{equation}
  \label{eq:N:01:0}
  N_{01}(\omega)=\frac{1}{2}N(\omega)\,,\qquad N_0(\omega)=\frac{1}{4\pi}\int_{\mathbb{S}^2} \frac{N(\sigma)-N(\omega)}{1-\langle \sigma,\omega\rangle} \ud S(\sigma)\,.
\end{equation}
These coeffcients will be used in Section~\ref{sec:logcorrection}, for the matching of the asymptotics along null infinity, see \eqref{eq:approximatesource2},  to the interior asymptotics \eqref{eq:lowestinteriorasymptotics}.

\subsubsection{Asymptotics towards the light cone for the second order homogeneous solution.}
\label{sec:hom:asymptotics:second}

We proceed similarly for the second order homogeneous solution, using that the solution to \eqref{eq:phi2infinityformula} can be obtained by taking the time derivative of \eqref{eq:phiinfinityformula} and replacing $N(\omega)$ by $M(\omega)$:
\begin{equation} \label{eq:Psi:2}
 \phi_{\,2,\infty}(t,r\omega)=\Psi_2[M](t,r\omega)=-\frac{1}{\!4\pi }\!\int_{\mathbb{S}^2}\!\!
  \frac{M(\sigma{}_{\!})\, dS(\sigma) }{\big(t\!-\!\langle\sigma,r\omega{}_{\!}\rangle\big)^2}.
\end{equation}
Writing the Taylor expansion for $M(\sigma)$ at $\sigma=\omega$ up to second order,
\begin{multline}
r^2\Psi_2[M](t,r\omega)
=-\frac{1}{\!4\pi }\!\int_{\mathbb{S}^2}\!\!
  \frac{M(\omega{}_{\!})\, dS(\sigma) }{\big(t/r\!-\!\langle\sigma,\omega{}_{\!}\rangle\big)^2}
-\frac{1}{\!4\pi }{\frac{1}{4}}\!\int_{\mathbb{S}^2}\!\!
  \frac{\big(1\shortminus \langle\sigma,\omega\rangle^2\big)\triangle_\omega M(\omega{}_{\!})\!\! }{\big(t/r\!-\!\langle\sigma,\omega{}_{\!}\rangle\big)^2}\, dS(\sigma)\\
  -\frac{1}{\!4\pi }\!\int_{\mathbb{S}^2}\!\!
  \frac{M(\sigma{}_{\!})-M(\omega{}_{\!})-\frac{1}{4}\big(1\shortminus \langle\sigma,\omega\rangle^2\big)\triangle_\omega M(\omega{}_{\!}) \!\!}{\big(t/r\!-\!\langle\sigma,\omega{}_{\!}\rangle\big)^2}\, dS(\sigma)  ,
\end{multline}
the first two integrals can be explicitly calculated,
and the last converges as $t/r\to 1$,  because the remainder cancels the singularity, see Section \ref{sec:SphericalTaylor}.
In fact,
\begin{equation}\label{eq:lowestinteriorasymptotics2}
r^2\Psi_2[M](t,r\omega)
\sim -\frac{M(\omega_{{}_{\!}})}{\!\!(t/r)^2\!-\!1\!\!}-
 \frac{\!\triangle_\omega M(\omega)\!}{4} \Big(\frac{t}{r} \ln\Big|\frac{t/r\!+\!1}{t/r\!-\!1}\Big|-2\Big)+M_1^\ast(\omega),
\end{equation}
 where
  \begin{equation}
    \label{eq:M1:ast}
    M_1^\ast(\omega)=-\frac{1}{\!4\pi }\!\int_{\mathbb{S}^2}\!\!
  \frac{M(\sigma_{{}_{\!}})\shortminus M(\omega_{{}_{\!}})\shortminus\frac{1}{4}\big(1\shortminus \langle\sigma,\omega\rangle^2\big)\triangle_\omega M(\omega_{{}_{\!}}) \!\!}{\big(1\!-\!\langle\sigma,\omega{}_{\!}\rangle\big)^2}\, dS(_{{}_{\!}}\sigma_{{}_{\!}})\,,
\end{equation}
is finite, $|M_1^\ast(\omega)|\lesssim \|M\|_{\mathrm{C}^4(\mathbb{S}^4)}$. 

 These terms are again relevant for the matching of the second order asymptotics along null infinity, to the homogeneous degree $-2$ asymptotics in the interior,  that we discuss in Section~\ref{sec:expansion:second:order:interior}.

 \subsection{Asymptotics with sources on light cones: cubic terms.}
 \label{sec:asym:cubic}

 We may derive in a similar manner homogeneous interior asymptotics for the equation
 \begin{equation} \label{eq:highersourcbegining}
-\Box \,\phi=m(r\!-t,\omega)/r^3\,.
\end{equation}

If we repeat the scaling argument in Section~\ref{sec:interior:lower} we  see  that the forward solution to \eqref{eq:highersourcbegining}
satisfies
$a^2\phi(at,a x)\to \Psi^3[M]$, as $t\to\infty$ if $t>r$, where, using the formulas in Appendix~\ref{sec:sourseformulas},
\begin{equation}
\Psi^3[M](t,r\omega)= \frac{1}{2\pi}\int_{\bold{S}^2}{\frac{
 M({\sigma}) \ud S({\sigma})}{(t+r)(t-r)}\,
},\qquad  M({\sigma})=\int_{-\infty}^{+\infty} m(q,\sigma)\, \ud q.
\end{equation}
In fact,  given  a solution $\phi$ to  \eqref{eq:highersourcbegining}
then $\phi_{a}(t,x)=a^2\phi(at,ax)$ satisfies
  \begin{equation*}
    -\Box\phi_{a}=m_a(r-t,\omega)/r^3\,,\qquad m_a(q,\omega)=am(aq,\omega).
  \end{equation*}
  As $a\!\to\! \infty$, $\int \! m_a({}_{\!}q,\omega{}_{\!}) \psi({}_{\!}q{}_{\!}) dq\!\to\! \psi_q(0)$, i.e.~in the sense of distribution theory $m_a\!\to\! \delta(q)M$, provided that $|m(q,\omega)|\!\lesssim \!\langle q\rangle^{-1-\epsilon}$, $\epsilon\!>\!0$. Hence $\phi_{a}\!\to\!\phi_{\infty}$ and  this solves
  \begin{equation*}
    -\Box \phi_{\infty}=M(\omega)\delta(q)/r^3\,.
  \end{equation*}

  \begin{remark}
  Here the right hand side does not define a distribution at $(0,0)$
  and this formula should be interpreted as the statement that the convolution of the right hand side with the forward fundamental solution gives $\Psi^3$ for $t>r$.
Hence
\beq
{\Phi_{\chi}}^{\!\!\!3}[m](t,r\omega)\sim \Psi^3[M](t,r\omega),\qquad t/r>1\,,
\eq
and as $r/t\to 1$,
\beq\label{eq:k3homsolformulaasym}
\Psi^3[M](t,r\omega)\sim M_0\Big( \frac{1
}{r(t-r)}-\frac{1}{2r^2}+\frac{t-r}{4 r^3}\Big),\,\qquad M_0=\frac{1}{4\pi}\int_{\bold{S}^2} M({\sigma}) dS({\sigma})\,.
\eq
\end{remark}

 \section{Radiation fields at null infinity for the wave equation with a source at the light cone}
\label{sec:story}

In this section we discuss the asymptotics in the wave zone of solutions to wave equations with sources. We discuss the expansions at null infinity that will be used for the scattering solutions in this paper, and derive several necessary conditions for the construction of asymptotic solutions from prescribed radiation fields.

\subsection{Existence of radiation field at null infinity for a solution to the homogeneous equation.}
\label{sec:radiationfield}
 A solution of a linear homogeneous wave equation $\Box\,\psi=0$
 with smooth  initial data decaying like $r^{-1}$,
 decays like $t^{-1}$ and has a radiation field\footnote{The discussion in this section is introductory, and based on integration along characteristics arguments outlined below. A precise characterisation of the relationship between slowly decaying data, namely data that contains $r^{-1}$ or $r^{-2}$ tails, and slow decay in time, namely homogenoeus solutions of degree $-1$ and $-2$ corresponding to $t^{-1}$ or $t^{-2}$ tails, is a subject of this paper, and addressed in Section~\ref{sec:ext:intro}; see in particular Theorem~\ref{thm:ext:scattering}.}
\begin{equation}\label{eq:radiationfield}
\psi(t,x)\sim \mathcal{F}(r-t,\omega)/r,
\qquad\text{where}\quad | \mathcal{F}(q,\omega)|+\langle q\rangle | \mathcal{F}'(q,\omega)|\lesssim 1,\qquad
\langle q\rangle= (1+|q|^2)^{1/2}.
\end{equation}
For any function $\mathcal{F}(q,\omega)$ we always denote by $\mathcal{F}'=\partial_q\mathcal{F}=\mathcal{F}_q$ the derivative with respect to $q=r-t$.

The same is true if only
\beq\label{eq:decayforradiation}
|\Box \,\psi_{{}_{\,}} |+r^{-2}|{\triangle}_\omega \psi|\lesssim
r^{-1} \langle t+r\rangle^{-1-\varepsilon}\langle t-r\rangle^{-1+\varepsilon}\langle (r-t)_+\rangle^{-\varepsilon},\qquad
\varepsilon>0,
\eq
and data decays like $r^{-1}$.  This can be seen by expressing the wave operator in spherical
coordinates:
\beq\label{eq:sphericalwaveoperator}
\Box\, \psi =-r^{-1}(\pa_t+\pa_r)(\pa_t-\pa_r)(r\psi)+r^{-2}{\triangle}_\omega \psi,
\eq
 and integrating, in the $t\!+_{\!}r$ direction and in the $t\!-\!r$ direction, to obtain
a bound for $r\psi$ and the asymptotics \eqref{eq:radiationfield}; see for instance Section~5, 7 in \cite{L17}.
In particular for a solution of $\Box \psi=0$ with spherically symmetric compactly supported data, $\psi=\mathcal{F}(r-t)/r$ is exact for large $r$.

\subsection{Log correction to radiation field at null infinity for wave equation with sources.}
\label{sec:logcorrection}

General quadratic inhomogeneous terms as in \eqref{eq:F:slow} do not decay enough
along the light cone for \eqref{eq:decayforradiation} to hold.
This arises in particular for  \eqref{eq:weak:2}, where by \eqref{eq:radiationfield},
\beq\label{eq:inhomogeneitywithradiationfield}
\psi_t(t,x)^2\sim \mathcal{F}^{\,\prime}(r\!-t,\omega)^2\!/r^2 \,,
\eq
but we can get additional decay $| \mathcal{F}^{\,\prime}(q,\omega)|\lesssim  \langle q\rangle^{-1-\gamma}$, for some $0<\gamma<1$, if initial data decays like $r^{-1-\gamma}$.
 The first equation in \eqref{eq:weak:2} is modelled by
\begin{equation} \label{eq:source}
-\Box \,\phi=n(r\!-t,\omega)/r^2,
\end{equation}
where we assume at a minimum that $|n(q,\omega)|\lesssim \langle q\rangle^{-1-\epsilon}$, $\epsilon>0$.
 The solution to the forward problem for \eqref{eq:source} then has  a logarithmic correction to the asymptotic behavior:
\beq \label{eq:approximatesource2}
\phi(t,r\omega)
\sim\ln{\Big|\frac{r}{\langle t\!-\!r\rangle}\Big|}
\,\frac{\mathcal{F}_{01}(r\!-\!t,\omega) \! }{r}+\frac{\mathcal{F}_0(r\!-\!t,\omega)\!}{r},\qquad\text{as}\quad t\to\infty,\quad \text{while}\quad r\!\sim t\,.
\eq
In fact, applying the expression for the wave operator in spherical coordinates \eqref{eq:sphericalwaveoperator} to this gives
\begin{equation*}
-\Box \phi(t,r\omega)
\sim -\frac{2 {\mathcal{F}_{01}^{\,\prime}(r\!-\!t,\omega)}}{r^2}.
\end{equation*}
and so if
\begin{equation}\label{eq:F01primecond}
 2\mathcal{F}_{01}^{\,\prime}(q,\omega)=-n(q,\omega),
 \end{equation}
  then \eqref{eq:source} holds up to an error bounded by the right hand side of \eqref{eq:decayforradiation}, if we also assume some additional decay of the scattering data for $\mathcal{F}_0$. This error hence corresponds to a regular radiation field without a log correction, that can be included in $\mathcal{F}_0$ above.
 The condition \eqref{eq:F01primecond} only determines $\mathcal{F}_{01}(q,\omega)$ up to a function of $\omega$,
 \begin{equation*}
\lim_{q\to -\infty}\mathcal{F}_{01}(q,\omega)=N_{01}(\omega),
 \end{equation*}
which in turn is determined from interior homogeneous asymptotics.
We will see that also the radiation field  $\mathcal{F}_{0}(q,\omega)$   has to match interior homogeneous asymptotics:
 \begin{equation*}
\lim_{q\to -\infty}\mathcal{F}_{0}(q,\omega)=N_{0}(\omega) .
 \end{equation*}

\subsection{Higher order expansion in the wave zone towards null infinity.}\label{sec:higherordernull}

In \eqref{eq:approximatesource2} we have already seen the leading orders of the expansion for \eqref{eq:source} at null infinity.
We will need to work with a higher order expansion at null  infinity for the solution of \eqref{eq:source}:
\beq \label{eq:secondorderapproximatesource2}
\phi(t,{}_{\!}r\omega)\!
\sim \!\Psi_{{}_{\!}rad}(r\!-\!t,\omega,{}_{\!}1{}_{\!}/r{}_{\!}){}_{\!}
=\ln{\!\Big|\!\frac{2\,r}{\langle t\shortminus r{}_{\!}\rangle{}_{\!}}\Big|}
\frac{\mathcal{F}_{\!01\!}(r\!-\!t,\omega) \! \! }{r}+\frac{\!\mathcal{F}_{\!0}(r\!-\!t,\omega)\!\!}{r}
+\ln{\!\Big|\!\frac{2\,r}{\langle t\shortminus r{}_{\!}\rangle{}_{\!}}\Big|}
\frac{\mathcal{F}_{\!11\!}(r\!-\!t,\omega) \! \! }{r^2}+\frac{\!\mathcal{F}_{\!1\!}(r\!-\!t,\omega)\!\!}{r^2}
\eq
Applying the expression for the wave operator in spherical coordinates
\eqref{eq:sphericalwaveoperator} gives
\begin{multline*}
-r^{-1}(\pa_t+\pa_r)(\pa_t-\pa_r)(r\Psi_{rad})
\!\sim\! \frac{\!2\mathcal{F}_{\!01}^{\,\prime}(r\shortminus t,\omega{}_{\!}) \! \!}{r^2}\,
- \frac{\!{}_{\!}\mathcal{F}_{\!01{}_{\!}}(r\shortminus t,\omega{}_{\!}) \! \!}{r^3}\,\\
+\,\frac{\!2\mathcal{F}_{\!11}^{\,\prime}(r\shortminus t,\omega{}_{\!}) \!\! }{r^3}\,
-2\ln{\!\Big|\!\frac{2\, r}{\langle t\shortminus r{}_{\!}\rangle{}_{\!}}\Big|}
\frac{\mathcal{F}_{\!11}^{\, \prime}(r\shortminus t,\omega{}_{\!}) \!\! }{r^3}\,
-\frac{\!2\mathcal{F}_{\!1}^{\,\prime}(r\shortminus t,\omega{}_{\!}) \! \!}{r^3}\,
+\frac{2(r\shortminus t)}{\langle t\shortminus r{}_{\!}\rangle^2{}_{\!}\!}
\frac{\!\mathcal{F}_{\!11}(r\shortminus t,\omega{}_{\!}) \! \! }{r^3},
\end{multline*}
up to terms of higher order in $1/r$,
and similarly
\beq
\frac{1}{r^2}\triangle_\omega \Psi_{rad}
\sim \ln{\Big|\frac{2\,r}{\!\langle t\!-\!r\rangle}\Big|}
\,\frac{\! \triangle_\omega \mathcal{F}_{01}(r\!-\!t,\omega)}{r^3}+\frac{\triangle_\omega \mathcal{F}_0(r\!-\!t,\omega)\!}{r^3}.
\eq
Equating coefficients of order $r^{-2}$ in
\eqref{eq:source} gives
\beq\label{eq:F01eq}
2\mathcal{F}_{01}^{\,\prime}(q,\omega) =-n(q,\omega),
\eq
and equating coefficient of order $r^{-3} \ln{\big|\frac{2\,r}{\langle t-r\rangle}\big|}$ in
\eqref{eq:source} gives
\beq\label{eq:F11eq}
2\mathcal{F}_{11}^{\, \prime}(q,\omega)
=\triangle_\omega \mathcal{F}_{01}(q,\omega),
\eq
Finally equating coefficients of order $r^{-3}$ and using \eqref{eq:F11eq} gives
\begin{equation}\label{eq:F1eq}
2\mathcal{F}_1^\prime(q,\omega)=-\mathcal{F}_{01}(q,\omega)+\Delta_\omega\mathcal{F}_{01}(q,\omega)+\Delta_\omega \mathcal{F}_0(q,\omega)+2 (q/\langle q\rangle^2) \mathcal{F}_{11} .
\end{equation}
Given $ \mathcal{F}_{\!0}(q,\omega)$ the system \eqref{eq:F01eq}, \eqref{eq:F11eq}, \eqref{eq:F1eq} has a unique solution modulo integration constants, that depend on $\omega$, which are determined from the source $n(q,\omega)$. In fact, $\mathcal{F}_{01}(q,\omega)$ is determined up to a constant and
$\mathcal{F}_{11}(q,\omega)$  and $\mathcal{F}_{1}(q,\omega)$ are determined up to
a linear function in $q$ that depend on $\omega$. Note in particular that as opposed to $\mathcal{F}_{01}(q,\omega)$, and $\mathcal{F}_0(q,\omega)$ which are bounded,  $\mathcal{F}_{11}(q,\omega)$  and $\mathcal{F}_1(q,\omega)$ grow linearly as $q\to-\infty$.
The constants will be determined by matching to the interior solution as $q\to -\infty$ in the next section.

While $\mathcal{F}_0$ can be viewed as given,  we will see that the  matching procedure also gives conditions that $\mathcal{F}_0$ has to satisfy and as a result \emph{it is not completely free to choose}; see Section~\ref{sec:matching}.

In summary, we record that:
Suppose that we have a solution to the system
\eqref{eq:F01eq}, \eqref{eq:F11eq}, \eqref{eq:F1eq}, so $\Psi_{rad}$ is a solution to \eqref{eq:source} up to order $r^{-3}$.
Then taking into account the lower order terms we get
\beq
\Box \Psi_{rad}+ \frac{\!n(r\!-\!t,\omega)\!\!}{r^2}=R_{rad},
\eq
where
\beq\label{eq:Rrad}
R_{rad}=\!\Big(\!2\ln{\!\Big|\frac{2\,r}{\!\langle t\!-\!r{}_{\!}\rangle{}_{\!}}\Big|}\!-\!3\!\Big)
\frac{\!\mathcal{F}_{\!11\!}(r\!-\!t,\omega)\!\!}{r^4}
+\frac{\!2\mathcal{F}_{\!1\!}(r\!-\!t,\omega)\!\!}{r^4}
+\ln{\!\Big|\frac{2\,r}{\!\langle t\!-\!r{}_{\!}\rangle{}_{\!}}\Big|}
\frac{\! \triangle_\omega \mathcal{F}_{11}(r\!-\!t,\omega)\!\!}{r^4}+\frac{\!\triangle_\omega \mathcal{F}_1(r\!-\!t,\omega)\!\!}{r^4}.
\eq

\section{Expansion close to the light cone of the interior homogeneous asymptotics}
\label{sec:interiorexpansion}

We have seen in Section~\ref{sec:hom:int} that in presence of quadratic or cubic source terms,
solutions to the forward problems \eqref{eq:box:n:2} and \eqref{eq:highersourcbegining} have homogeneous asymptotics in the interior $r<t$. In order to show existence for the backward problem with data at null infinity the asymptotic behavior at time-like infinity has to be taken into account. This is because the solution in this region will influence the solution along the backward light cone and hence near null infinity.

In view of the characterisation of the interior asymptotics in terms of homogeneous functions obtained in Section~\ref{sec:hom:int}, we are now interested in their behaviour close to the lightcone.
Since the homogeneous functions in the interior are solutions to a wave equation with sources on the lightcone, we can derive their expansions towards the lightcone explicitly. The terms in this expansion are relevant for the matching procedure; recall here also the discussion in Section~\ref{sec:matching:intro} and in particular Fig.~\ref{fig:limits}.

\subsection{Interior homogeneous asymptotics.}

Consider the equation \eqref{eq:source}.
We have seen by a scaling argument in Section~\ref{sec:hom:int},
that the leading order interior asymptotics is given by \eqref{eq:wavesourceconeformulasec2.3}.

Alternatively we can use the fundamental solution directly to compute the asymptotics directly; cf.~\cite{L90a,L17}. The convolution of the right hand side of \eqref{eq:source} with the forward fundamental solution $E=\delta(t^2-|x|^2)H(t)/2\pi$ of $-\Box$ is given by
 \begin{equation}\label{eq:Phi:21}
\Phi[n](t,r\omega)=\int_{r-t}^{\infty} \frac{1}{4\pi}\int_{\mathbb{S}^2}{\frac{\chi\,
 n({q},{\sigma})\, dS({\sigma})\,d {q}}{t-r+{q}+r\big(1-\langle\, \omega,{\sigma}\rangle\big)}\,}\,.
\end{equation}
Although we do not need to use this here one can get a rate at which \eqref{eq:Phi:21} converges to
\begin{equation}\label{eq:wavesourceconeformula}
 \Psi_1[N](t,r\omega)=\frac{1}{\!4\pi }\!\int_{\mathbb{S}^2}\!\!
  \frac{N(\sigma{}_{\!})\, dS(\sigma) }{t\!-\!\langle\sigma,r\omega{}_{\!}\rangle},\qquad\text{where}\quad
   N{}_{\!}(\omega\!)=\!\! \int_{-\infty}^\infty\!\!n(q,\omega{}_{\!})\,\ud q,
\end{equation}
as $t\to\infty$ while $r/t\leq c<1$.
 To get improved asymptotics we will assume that $n$ is more concentrated close to the light cone:
\begin{equation}\label{eq:ndecay}
 |\,(\langle q\rangle\pa_{q})^k\pa_\omega^\alpha n({q},\omega)| \leq C
 \langle q\rangle^{-2-2\gamma} ,\qquad k+|\alpha|\leq N,\quad 0<\gamma<1,
\end{equation}
as is the case for \eqref{eq:inhomogeneitywithradiationfield}.
 We will also use that for $t>r$,
\begin{equation}
  \Box\Psi_1[N]=0,\qquad (t>r),
\end{equation}
which follows immediately from \eqref{eq:phiinfinityformula}, given that the source of $\phi_{1,\infty}$ is only supported on the light cone, or alternatively directly from the formula \eqref{eq:wavesourceconeformula} which shows that $\Psi_1$ is a superposition of plane waves.

\subsection{Expansion of the leading order interior homogeneous asymptotics $r<t$.}
\label{sec:homoexpansion}

As $r\to t$ we  have an expansion
\beq\label{eq:homoexpansion}
\Psi_1[N](t,r\omega{}_{\!})
= N_{01}{}_{\!}(\omega{}_{\!})\frac{1}{r}\!\ln{\!\Big|\frac{ 2\,r}{t\shortminus r}\Big|}+N_{0}(\omega{}_{\!})\frac{1}{r}
+N_{{}_{\!}11{}_{\!}}(\omega{}_{\!})\frac{r\shortminus t\!}{\,r^2}\ln{\!\Big|\frac{ 2\, r}{t\shortminus r}\Big|}
+N_{{}_{\!}1{}_{\!}}(\omega{}_{\!})\frac{r\shortminus t\!}{\,r^2}
+O\Big(\!\frac{(r\shortminus t)^2\!\!\!}{\,r^3}
\ln{\!\Big|\frac{ 2\, r}{t\shortminus r}\Big|}\Big) .
\eq

To see that such an expansion exists let
\beq
N(z,\omega)\!=\!\int_{\langle \sigma,\omega\rangle =z}\!\!\!\!\! \!\! \!  N(\sigma) \,ds(\sigma) \Big/\!\!\int_{\langle \sigma,\omega\rangle =z} \!\!\!\! \!\! \!  ds(\sigma)
\eq
 be the average of $N(\sigma)$ over the circle $\langle \omega,\sigma\rangle\!=\!z$.
Then $N(\omega,z)$ is a smooth function on $\mathbb{S}^2\times[-1,1]$ if $N(\omega)$ is a smooth function on $\mathbb{S}^2$ , and with $y\!=\!(t-r\!)/r $, we have
\beq
r\Psi_1[N](t,r\omega)= \int_{-1}^1 N(\omega,z)(1+y -z)^{-1} dz.
\eq
The expansion \eqref{eq:homoexpansion} follows from expanding
$ N(\omega,z)$ in a Taylor series and using polynomial division and partial fractions to evaluate the integrals; see Appendix~\ref{sec:SphericalTaylor}.

We have seen in Section~\ref{sec:hom:asymptotics:leading} that  $N_{01}$ and $N_0$ are given by \eqref{eq:N:01:0}. We will now show that $N_{11}$ and $N_1$ are determined from $N_{01}$ and $N_0$. This means that all coefficients in \eqref{eq:homoexpansion} can be computed from $N(\omega)$ alone.

Putting the expansion \eqref{eq:homoexpansion}  into the equation
$ \Box \Psi_1[N]=0$  gives relations between the coefficients:
\beq\label{eq:homocond}
2N_{11}(\omega)=\triangle_\omega N_{01}(\omega),\qquad
-N_{01}(\omega)+4N_{11}(\omega)-2N_1(\omega)+ \Delta_\omega N_0(\omega)=0,
\eq
and using the first equation the last equation becomes:
  \begin{equation}
    \label{eq:N1}
    2N_1(\omega)=2\Delta_\omega N_{01}(\omega)+\Delta_\omega N_0-N_{01}(\omega) .
  \end{equation}
Writing $r\Psi_1[N](t,r\omega)=\Psi_1(s,\omega)$, with $s=t/r$, and applying the wave operator to the expansion expressed in spherical coordinates $(s,\omega)$ gives the conditions \eqref{eq:homocond}.
In fact, using the expression for the wave operator in spherical coordinates
\eqref{eq:sphericalwaveoperator} we get
\beq
r^3 \Box \Psi_1[N]=-r^2(\pa_t^2-\pa_r^2) \Psi_1(s,\omega)
+ \triangle_\omega \Psi_1(s,\omega)
= \pa_s\big((s^2\!-1)\pa_s\Psi_1(s,\omega)\big) +\triangle_\omega \Psi_1(s,\omega) =0,
\eq
and applying the first operator to $r$ times the expansion \eqref{eq:homoexpansion} gives,  modulo $O\big(({}_{\!}s\shortminus 1{}_{\!})
\ln{\!\big|\frac{ 2}{{}_{\!}s\shortminus 1{}_{\!}}\big|}\big)$,
\begin{multline*}
\pa_s \Big((s^2\!-\!1)\pa_s\Big( N_{01}{}_{\!}(\omega{}_{\!})\ln{\Big|\frac{ 2}{s\shortminus 1}\Big|}+N_{0{}_{\!}}(\omega{}_{\!})
+N_{{}_{\!}11{}_{\!}}(\omega{}_{\!})(1\!-\!s)\ln{\Big|\frac{ 2}{s\shortminus 1}\Big|}
+N_{{}_{\!}1\!}(\omega{}_{\!})(1\!-\!s)\Big)\Big)\\
=\pa_s \Big(\!(s^2\!-{}_{\!}1)\Big( {}_{\!} \frac{\shortminus N_{01}{}_{\!}(\omega{}_{\!})\!}{s\shortminus 1}
-N_{{}_{\!}11{}_{\!}}(\omega{}_{\!})
\Big(\!\ln{\!\Big|\frac{ 2}{{}_{\!}s\shortminus{}_{\!}1{}_{\!}}\Big|}{-1}\!\Big){}_{\!}
-N_{{}_{\!}1{}_{\!}}(\omega{}_{\!})\!\Big) \!\Big)\!
=\shortminus N_{01}{}_{\!}(\omega{}_{\!})
+N_{{}_{\!}11{}_{\!}}(\omega{}_{\!})\Big(\!s+1{}_{\!}-2s\Big(\!\ln{\!\Big|\frac{ 2}{{}_{\!}s\shortminus 1{}_{\!}}\Big|}-1\!\Big)\!\Big){}_{\!}
-2sN_{{}_{\!}1{}_{\!}}(\omega{}_{\!})\\
=-2N_{{}_{\!}11{}_{\!}}(\omega{}_{\!})\ln{\!\Big|\frac{ 2}{{}_{\!}s\shortminus 1}\Big|}
-N_{01}{}_{\!}(\omega{}_{\!})  +4N_{11}(\omega) -2N_{{}_{\!}1{}_{\!}}(\omega{}_{\!})
-2N_{{}_{\!}11{}_{\!}}(\omega{}_{\!})\big({}_{\!}s\shortminus 1{}_{\!}\big)
\ln{\!\Big|\frac{ 2}{{}_{\!}s\shortminus 1 }\Big|}
 +3N_{{}_{\!}11{}_{\!}}(\omega{}_{\!})\big({}_{\!}s\shortminus 1{}_{\!}\big)
-2\big({}_{\!}s\shortminus 1{}_{\!}\big)N_{{}_{\!}1{}_{\!}}(\omega{}_{\!}),
\end{multline*}
and matching this to $-\triangle_\omega $ applied to the expansion \eqref{eq:homoexpansion} gives the conditions \eqref{eq:homocond}.

\subsection{Expansion of the second order interior homogeneous asymptotics $r<t$.} \label{sec:expansion:second:order:interior}

Recall the second order homogeneous asymptotics $\Psi_2$ from Section~\ref{sec:hom:asymptotics:second}.
As $r\to t$ we have an expansion
\beq\label{eq:homoexpansion2}
\Psi_2[M](t,{}_{\!}r\omega{}_{\!})
= \frac{\!M_{0}(\omega{}_{\!})\!}{r^2}\frac{r}{\!r\shortminus t\!}
+\frac{\!M_{11}{}_{\!}(\omega{}_{\!})\!}{r^2}\ln{\!\Big|{}_{\!}\frac{ 2\, r}{t\shortminus r}{}_{\!}\Big|}
+ \frac{\!M_{1}{}_{\!}(\omega{}_{\!})\!}{r^2}
+\frac{\!M_{21}{}_{\!}(\omega{}_{\!})}{r^2}\frac{r\shortminus t\!}{\,r}
\ln{\!\Big|{}_{\!}\frac{ 2\, r}{t\shortminus r}{}_{\!}\Big|}
+\frac{\!M_{2}{}_{\!}(\omega{}_{\!})}{r^2}\frac{r\shortminus t}{\,r}
+O\Big(\!\frac{(r\shortminus t)^2\!\!\!}{\,r^4}
\ln{\!\Big|{}_{\!}\frac{ 2\, r}{t\shortminus r}{}_{\!}\Big|}\Big).
\eq

Similarly to the above  the  existence of an expansion follows from
\beq
r^2\Psi_2[M](t,r\omega)= \int_{-1}^1 \! \!\! \! M(\omega,z)(1+y -z)^{-2} dz,
\quad\text{where}\quad
M(z,\omega)\!=\!\int_{\langle \sigma,\omega\rangle =z}\!\! \!\! \!\!\! \! \!\! M(\sigma) \,ds(\sigma) \Big/\!\!\int_{\langle \sigma,\omega\rangle =z} \!\!\! \!\! \!\!\!\! \!\! \!  ds(\sigma).
\eq

We already know from \eqref{eq:lowestinteriorasymptotics2} that
\begin{equation}
  \label{eq:M01:given}
  M_0(\omega)=\frac{1}{2}M(\omega) \,,\qquad M_{11}(\omega)=-\frac{1}{4}\Delta_\omega M(\omega)\,, \qquad
  M_1(\omega)=\frac{1}{4}M(\omega)-\frac{1}{4}\Delta_\omega M(\omega)+M_1^\ast(\omega),
\end{equation}
where also $M_1^*(\omega)$, given by \eqref{eq:M1:ast}, is computed from $M(\omega)$ only.

We now show that knowing $M_0$ and $M_1$ alone, we can compute the coefficients $M_{11}$, $M_{21}$ and $M_2$ using that $\Box\Psi_2=0$ away from the lightcone by \eqref{eq:phi2infinityformula}.  This  shows that all coefficients in \eqref{eq:homoexpansion2} are determined from $M(\omega)$.

Writing $r^2\Psi_2[M](t,r\omega)=\Psi_2(s,\omega)$, with $s=t/r$, and applying the wave operator to the expansion expressed in spherical coordinates $(s,\omega)$ gives
\begin{equation*}
r^{4\,}\Box \Psi_2[N]=-r^2(\pa_t^2\!-\pa_r^2) \Psi_2(s,\omega)
+2\big(1\!-\!r\pa_r\big)\Psi_2(s,\omega)
+ \triangle_\omega \Psi_2(s,\omega)
= \pa_s^2\big((s^2\!-\!1)\Psi_2(s,\omega)\big)
 +\triangle_\omega \Psi_2(s,\omega).
\end{equation*}
Applying the first operator to the expansion \eqref{eq:homoexpansion2} gives,
\begin{multline}
\pa_s^2 \Big((s^2\!-\!1)\Big( M_{0}{}_{\!}(\omega{}_{\!})\frac{ 1}{1\shortminus s}
+M_{{}_{\!}11{}_{\!}}(\omega{}_{\!})\ln{\Big|\frac{ 2}{s\shortminus{}_{\!}1}\Big|}+M_{1{}_{\!}}(\omega{}_{\!})
+M_{{}_{\!}21{}_{\!}}(\omega{}_{\!})(1\shortminus s)\ln{\Big|\frac{ 2}{s\shortminus{}_{\!}1}\Big|}+M_{2{}_{\!}}(\omega{}_{\!})(1\shortminus s)\Big)
\Big)\\
=2\Big(M_{{}_{\!}11{}_{\!}}(\omega{}_{\!})\ln{\Big|\frac{ 2}{s\shortminus{}_{\!}1}\Big|}+M_{1{}_{\!}}(\omega{}_{\!})
+M_{{}_{\!}21{}_{\!}}(\omega{}_{\!})(1\shortminus s)\ln{\Big|\frac{ 2}{s\shortminus{}_{\!}1}\Big|}+M_{2{}_{\!}}(\omega{}_{\!})(1\shortminus s)\Big)\\
-4s\Big(\frac{M_{{}_{\!}11{}_{\!}}(\omega{}_{\!})}{s\shortminus{}_{\!}1}
+M_{{}_{\!}21{}_{\!}}(\omega{}_{\!})\ln{\Big|\frac{ 2}{s\shortminus{}_{\!}1}\Big|}-M_{{}_{\!}21{}_{\!}}(\omega{}_{\!})
+M_{2{}_{\!}}(\omega{}_{\!})\Big)
+(s^2\!-1)\Big(\frac{M_{{}_{\!}11{}_{\!}}(\omega{}_{\!})}{(s\shortminus{}_{\!}1)^2}
 +\frac{M_{{}_{\!}21{}_{\!}}(\omega{}_{\!})}{s\shortminus{}_{\!}1}\Big)
\\
= -2\frac{M_{{}_{\!}11{}_{\!}}(\omega{}_{\!})}{s\shortminus 1}
+\big(\!\shortminus 4 M_{{}_{\!}21{}_{\!}}(\omega{}_{\!})+2M_{{}_{\!}11{}_{\!}}(\omega{}_{\!})\big)
\ln{\Big|\frac{ 2}{s\shortminus{}_{\!}1{}_{\!}}\Big|}+ 2M_{1}{}_{\!}(\omega{}_{\!}) -2M_{{}_{\!}11{}_{\!}}(\omega{}_{\!})
 +6M_{{}_{\!}21{}_{\!}}(\omega{}_{\!})-4M_{{}_{\!}2{}_{\!}}(\omega{}_{\!})
+O\Big(\!({}_{\!}s\shortminus{}_{\!}1{}_{\!})
\ln{\!\big|\frac{ 2}{{}_{\!}s\shortminus{}_{\!}1{}_{\!}}\big|}\Big),
\end{multline}
and comparing this to $-\triangle_\omega $ applied to the expansion \eqref{eq:homoexpansion2} gives the conditions:
\beq\label{eq:M0cond}
M_{11}(\omega)= -\frac{1}{2}\triangle_\omega M_{0}{}_{\!}(\omega{}_{\!}),\qquad
M_{21}(\omega)=\frac{1}{4} \triangle_\omega M_{{}_{\!}11{}_{\!}}(\omega{}_{\!}) +\frac{1}{2}M_{11},
 \eq
 \beq
M_2(\omega) =\frac{1}{2}M_{1}{}_{\!}(\omega{}_{\!}) + \frac{1}{4} \triangle_\omega M_{1}{}_{\!}(\omega{}_{\!})-\frac{1}{2}M_{{}_{\!}11{}_{\!}}(\omega{}_{\!})
+\frac{3}{2}M_{{}_{\!}21{}_{\!}}(\omega{}_{\!}) .
\eq
 Note that these formulas in particular verify the expression for  $M_{11}$ found in \eqref{eq:lowestinteriorasymptotics2}.

\section{Matching of the asymptotics of the interior homogeneous solution to the asymptotics at null infinity  in the case of radiation fields decaying in the exterior}\label{sec:matching}

We now want to match  the asymptotics at null infinity \eqref{eq:secondorderapproximatesource2}
to the interior homogeneous asymptotics \eqref{eq:homoexpansion} and \eqref{eq:homoexpansion2}.
 This means in particular that the limits of $\mathcal{F}_{i}$, and $\mathcal{F}_{i1}$, are non-trivial as $q\to -\infty$. On the other hand, we will  impose that
 \beq
\label{eq:limitF0plus}
\lim_{q\to +\infty}  \mathcal{F}_{i}(q,\omega)=0,\qquad
\lim_{q\to +\infty}  \mathcal{F}_{i1}(q,\omega)=0,\qquad i=0,1,
\eq
because this is true for the forward solution with sufficiently fast decaying data and if $n(q,\omega)$ decays sufficiently fast. This can be seen from the explicit solution formula
\eqref{eq:Phi:21}  if \eqref{eq:ndecay} holds. In Section~\ref{sec:hom:ext} we will extend the construction to the case when these limits do not vanish.

The source $n(q,\omega)$ is free to choose and $\mathcal{F}_0(q,\omega)$ is free to choose up to certain conditions that we derive below. As we will see $\mathcal{F}_1$, and $\mathcal{F}_{i1}$, $i\!=\!1,2$ are determined from these and matching conditions with the interior solution.

Matching the asymptotics allows us to define an asymptotic solution, extending from the interior to the exterior across the lightcone, and underlies the proof of Theorems~\ref{thm:n} and~\ref{thm:m}. More precisely, the asymptotic solution interpolates smoothly across a region where $r-t\sim -r^{1/2}$, between homogeneous solutions in the interior, and polyhomogeneous expansions near null infinity.

\subsection{Matching of order $r^{-1}$.}
As we see from  \eqref{eq:F01eq}, $\mathcal{F}_{01}$  is determined from $n$ apart from an integration constant that depends on $\omega$,
that has to be chosen to match the interior homogeneous solution in \eqref{eq:homoexpansion}.
Indeed integrating \eqref{eq:F01eq} with \eqref{eq:limitF0plus} gives
\begin{equation}
  \label{eq:F01:n}
 \mathcal{F}_{01}(q,\omega)=\frac{1}{2}\int_{q}^\infty n(s,\omega)\, \ud s.
\end{equation}
In view of \eqref{eq:N:01:0} this matches the interior homogeneous solution \eqref{eq:homoexpansion}:
\beq
N_{01}(\omega)=\lim_{q\to -\infty}  \mathcal{F}_{01}(q,\omega).
\eq

 Let us set
  \begin{equation}
    \label{eq:H01}
    \mathcal{H}_{01}= \mathcal{F}_{01}(q,\omega)- N_{01}(\omega)\chi_{q\leq 0}
  \end{equation}
where $\chi_{q\leq 0}$ is a smooth cutoff so that
$\chi_{q\leq 0}=1$, when $q\leq 0$ and  $\chi_{q\leq 0}=0$, when $q> 1$.
Then in view of the decay assumption on $n$ in \eqref{eq:ndecay} it follows from \eqref{eq:F01:n}
\beq\label{eq:angulardecay}
\big|(\langle q\rangle \pa_q)^k\pa^\alpha_\omega  \mathcal{H}_{01} \big|\lesssim \langle q\rangle^{-1-2\gamma}.
\eq

We see from \eqref{eq:homoexpansion} that the radiation field $\mathcal{F}_0(q,\omega)$ must satisfy the matching condition
\beq
\lim_{q\to -\infty}  \mathcal{F}_{0}(q,\omega)=N_{0}(\omega),
\eq
and recall from \eqref{eq:N:01:0} that $N_{0}(\omega)$ is determined from $N_{01}(\omega)$.

\subsection{Matching of order $r^{-1}q^{-1}$.}
 Moreover, in view of the leading order term in \eqref{eq:homoexpansion2}, we have the matching condition
  \begin{equation}
    \lim_{q\to -\infty}q\bigl(\mathcal{F}_0(q,\omega)-N_0(\omega)\bigr)=M_0(\omega)\,.
  \end{equation}
Let us set
\begin{equation}
  \label{eq:H0}
  \mathcal{H}_0=\mathcal{F}_0(q,\omega)-N_0(\omega)\chi_{q\leq 0}-M_0(\omega) \frac{q}{\langle q\rangle^2}\chi_{q\leq 0} ,
\end{equation}
then we   assume that
\begin{equation} \label{eq:F0decay}
 \big|(\langle q\rangle \pa_q)^k\pa^\alpha_\omega\mathcal{H}_0\big|
 \les \langle q\rangle^{-\kappa},\,\qquad 1<\kappa\leq 1+\min\{1,2\gamma\},
\end{equation}
since this is true for the forward problem.

With these choices we have matched $\Psi_{rad}$ to $\Psi_1[N]+\Psi_2[M]$ at order $r^{-1}$.

\subsection{Matching of order $r^{-2}$.}
Integrating \eqref{eq:F11eq} using \eqref{eq:limitF0plus} gives
\beq 
2\mathcal{F}_{11}(q,\omega)
=-\int_q^\infty \triangle_\omega \mathcal{F}_{01}(s,\omega)\, \ud s .
\eq
In fact, integrating by parts we get
\begin{equation*}
\int_q^\infty \!\!\!\!\mathcal{F}_{01}(s,\omega{}_{\!})\, ds =
\frac{1}{2}\!\int_q^\infty \!\!\!\!\int_{\rho}^\infty  \!\!\!\! \! n(s,\omega{}_{\!})\, ds \, d\rho
=\frac{1}{2}\!\int_q^\infty\!\!\!\! (\rho-q) \,  n(\rho,\omega{}_{\!})\, d\rho
=-N_{01{}_{\!}}(\omega{}_{\!})\, q\,+{M}_{0{}_{\!}}(\omega{}_{\!})+\frac{1}{2}\!\int_{-\infty}^q\!\!\!\!( \rho-q) \,  n(\rho,\omega{}_{\!})\, d\rho,
\end{equation*}
where
\beq
{M}_{0}(\omega)=\frac{1}{2}\int_{-\infty}^\infty\!\!\! \rho \,  n(\rho,\omega)\, d\rho .
\eq
Using  \eqref{eq:homocond} and \eqref{eq:M0cond} it follows that
\begin{equation}\label{eq:F11:N11}
  \begin{split}
2\mathcal{F}_{11}(q,\omega{}_{\!})=&  \Delta_\omega N_{01}(\omega)\, q-\Delta_\omega M_0(\omega)-\frac{1}{2}\int_{-\infty}^q (\rho-q)\Delta_\omega n(\rho,\omega)\ud \rho\\
=&2N_{11}(\omega) \, q\, \chi_{q\leq 0}+ 2{M}_{11}(\omega{}_{\!})\chi_{q\leq  0}+O(\langle q\rangle^{-2\gamma}) ,
\end{split}
\end{equation}
  and also using \eqref{eq:homocond},
  \begin{equation}
    \mathcal{F}_{11}'(q,\omega)-N_{11}(\omega)\chi_{q\leq 0}=\frac{1}{2}\triangle_\omega\mathcal{F}_{01}(q,\omega)-\frac{1}{2}\triangle N_{01}(\omega)\chi_{q\leq 0}=\frac{1}{2}\triangle_\omega\mathcal{H}_{01}\les \langle q\rangle^{-1-2\gamma}\,.
  \end{equation}

In other words if we set
\begin{equation}
  \label{eq:H11}
  \mathcal{H}_{11}(q,\omega)=\mathcal{F}_{11}(q,\omega)-N_{11}(\omega)q\:\chi_{q\leq 0}-M_{11}(\omega)\chi_{q\leq 0},
\end{equation}
it follows that
\begin{equation}
  \label{eq:H11:estimate}
  |(\langle q\rangle \pa_q)^k \pa_\omega^\alpha \mathcal{H}_{11}|\les \langle q\rangle^{-2\gamma}\,,\qquad ( k\leq 1,|\alpha|\leq 1).
\end{equation}

Furthermore, let us set
\begin{equation}
  \label{eq:H1}
  \mathcal{H}_{1}(q,\omega)=\mathcal{F}_{1}(q,\omega)-N_{1}(\omega)q\:\chi_{q\leq 0}-M_1(\omega)\chi_{q\leq 0}\,.
\end{equation}
It remains to show that
\begin{equation} \label{eq:H1:estimate}
  |(\langle q\rangle \pa_q)^k \partial_\omega^\alpha \mathcal{H}_1|\les \langle q\rangle^{-\kappa+1}\,,\qquad ( k\leq 1,|\alpha|\leq 1),
\end{equation}
which then concludes the matching of $\Psi_{rad}$ in \eqref{eq:secondorderapproximatesource2} to  $\Psi_1[N]+\Psi_2[M]$ in  \eqref{eq:homoexpansion} and \eqref{eq:homoexpansion2} in the interior,  at order $r^{-2}$, as $q\to -\infty$.
By \eqref{eq:F1eq} and using \eqref{eq:F01:n} and \eqref{eq:F11:N11}
  \begin{equation}
    \begin{split}
   2\mathcal{F}_1^{\,\prime}(q,\omega)=&-\mathcal{F}_{01}(q,\omega)+\Delta_\omega\mathcal{F}_{01}(q,\omega)+\Delta_\omega \mathcal{F}_0(q,\omega)+2 (q/\langle q\rangle^2) \mathcal{F}_{11}\\
      =& \frac{1}{2}\int_q^\infty \Bigl[-n(s,\omega)+\Delta_\omega n(s,\omega)\Bigr]\ud s-\frac{1}{2}\int_{-\infty}^q (q/\langle q\rangle^2)(\rho-q)\Delta_\omega n(\rho,\omega)\ud \rho\\
      &+\Delta_\omega N_{01}(\omega) \,  (q/\langle q\rangle)^2-\Delta_\omega M_0(\omega)\,(q/\langle q\rangle^2)+\Delta_\omega \mathcal{F}_0(q,\omega)\\
      =&-N_{01}(\omega)+2\Delta_\omega N_{01}(\omega)-\Delta_\omega M_0(\omega)\,(q/\langle q\rangle^2)+\Delta_\omega \mathcal{F}_0(q,\omega)-\Delta_\omega N_{01}(\omega) /\langle q\rangle^2\\
      &+\frac{1}{2}\int_{-\infty}^q n(s,\omega)\ud s-\frac{1}{2\langle q\rangle^2}\int_{-\infty}^q\Delta_\omega n(s,\omega)\ud s -\frac{q}{2\langle q\rangle^2}\int_{-\infty}^q s\Delta_\omega n(s,\omega)\ud s ,
    \end{split}
  \end{equation}
  and by  \eqref{eq:N1}
  \begin{equation}
    -N_{01}(\omega)+2\Delta_\omega N_{01}(\omega)=2N_1(\omega)-\Delta_\omega N_0\,.
  \end{equation}

 Therefore, in view of \eqref{eq:H0},
 \begin{equation} \label{eq:F1:prime}
   \begin{split}
2\mathcal{F}_{\!1}^{\,\prime}(q,\omega)
=&2N_{1}(\omega{}_{\!})\chi_{q\leq 0}
+\triangle_\omega\big( \mathcal{F}_{\!0}(q,\omega)- N_{0}(\omega)\chi_{q\leq  0}\,-M_0(\omega)\chi_{q\leq  0}\,(q/\langle q\rangle^2)\chi_{q\leq 0}\big)
+\mathcal{R}_1(q,\omega)  \\
=& 2\pa_q \big( N_1(\omega) \chi_{q\leq 0}q\big)+(\triangle_\omega \mathcal{H}_0)(q,\omega)+\mathcal{R}_1(q,\omega) ,
\end{split}
\end{equation}
 where
  \begin{equation}
    \label{eq:Ntilde}
    \begin{split}
    \mathcal{R}_1(q,\omega) =&\Big(-\frac{1}{2 \langle q\rangle^2}\Delta_\omega N(\omega)
    +\frac{1}{2}\int_{-\infty}^q n(s,\omega)\ud s-\frac{1}{2\langle q\rangle^2}\int_{-\infty}^q\Delta_\omega n(s,\omega)\ud s -\frac{q}{2\langle q\rangle^2}\int_{-\infty}^q s\Delta_\omega n(s,\omega)\ud s\Big)\chi_{q\leq 0}\\
    &+\bigtwo(-\mathcal{F}_{01}(q,\omega)+\Delta_\omega\mathcal{F}_{01}(q,\omega)+2 (q/\langle q\rangle^2) \mathcal{F}_{11}\bigtwo)\chi_{q>0}
    -2 N_{1}(\omega{}_{\!})\chi_{q\leq 0}^\prime q
    =O(\langle q\rangle^{-1-\min(1,2\gamma)}),
  \end{split}
\end{equation}
and $\chi_{q>0}=1-\chi_{q\leq 0}$.
Integrating \eqref{eq:F1:prime} we get  with \eqref{eq:F0decay},
\begin{equation}\label{eq:F1:M1}
\begin{split}
  \mathcal{F}_{\!1}(q,\omega)
=& N_{1}(\omega{}_{\!})\chi_{q\leq 0}\, q
-\frac{1}{2}\int_{-\infty}^{+\infty} \Bigl[\Delta_\omega\mathcal{H}_0+\mathcal{R}_1(q,\omega)\Bigr]\ud q
+ O(\langle q\rangle^{-\kappa+1})\\
  =& N_{1}(\omega{}_{\!})\chi_{q\leq 0}\, q
+M_1(\omega)\chi_{q\leq 0}
+ O(\langle q\rangle^{-\kappa+1}),
\end{split}
\end{equation}
 provided
  \begin{equation}
    \int_{-\infty}^{+\infty} (\triangle_\omega\mathcal{H}_0)(q,\omega) \ud q+\int_{-\infty}^{+\infty} \mathcal{R}_1(q,\omega) \ud q = -2M_1(\omega),
  \end{equation}
  which can be viewed as a condition for $\int \mathcal{H}_0\ud q$:
  \begin{equation}
    \label{eq:F0:elliptic}
    \Delta_\omega\biggl( \int_{-\infty}^{+\infty}\mathcal{H}_0(q,\omega) \ud q \biggr) =-2M_{1}(\omega)-\int_{-\infty}^{+\infty}\mathcal{R}_1(q,\omega)\ud q .
  \end{equation}

The equation \eqref{eq:F1:M1} in particular confirms \eqref{eq:H1:estimate}.

\subsection{The integrability condition.}\label{sec:integrabilitycondition}

  The equation \eqref{eq:F0:elliptic} has an integrability condition: the average on the sphere of the right hand side must vanish.
  First note that
  \begin{equation}
    \begin{split}
      \int_{-\infty}^{+\infty}\int_{\mathbb{S}^2}\mathcal{R}_1(q,\omega)\,\ud S(\omega)\ud q=&\int_{\mathbb{S}^2}\biggl[\frac{1}{2}\int_{-\infty}^{0}\int_{-\infty}^q n(s,\omega)\ud s  \ud q-\int_0^\infty \mathcal{F}_{01}(q,\omega)\ud q\biggr]\ud S(\omega)\\
      =&-\frac{1}{2}\int_{\mathbb{S}^2}\int_{-\infty}^\infty q\, n(q,\omega) \ud q\, \ud S(\omega)=-\frac{1}{2}\int_{\mathbb{S}^2} M(q,\omega)\ud S(\omega),
  \end{split}
\end{equation}
and thus this condition reduces to
\begin{equation}
  \label{eq:8}
  -2\overline{M_1}+\frac{1}{2}\overline{M}=0,
\end{equation}
where $\overline{M}=\int_{\mathbb{S}^2}M(\omega)\ud S(\omega)/4\pi$ denotes the average of a function $M(\omega)$ on the sphere.
We now show this is indeed always satisfied and without further conditions $\overline{M_1}=\overline{M}/4$.

In fact, we compute from  \eqref{eq:Psi:2} the average of $\Psi_2[M]$ on $\mathbb{S}^2$:
  \begin{equation}
    \label{eq:Psi:2:avg}
    r^2 \overline{\Psi_2}(t,r)=-\frac{1}{(4\pi)^2}\int_{\mathbb{S}^2}
    \int_{\mathbb{S}^2}\frac{M(\sigma)\,\ud S(\sigma)\,\ud S(\omega)}{(t/r-\langle \sigma,\omega\rangle)^2}=-\frac{\overline{M}}{(t/r)^2-1},
  \end{equation}
  where $\overline{M}=\int_{\mathbb{S}^2}M(\omega)\ud S(\omega)$ denotes the average of $M(\omega)$.

Hence, arranging the terms as in \eqref{eq:homoexpansion2},
  \begin{equation}
    r^2\overline{\Psi_2[M]}=\frac{1}{2}\frac{\overline{M}r}{r-t}
    +\frac{1}{2}\frac{\overline{M}}{t/r+1},
  \end{equation}
  and we have in particular that $\overline{M_0}=\overline{M}/2$, and $\overline{M_1}=\overline{M}/4$, which also implies that $\overline{M_1^\ast}=0$.

In summary, for the choice of $\mathcal{F}_0$, first solve
\begin{equation}
  \label{eq:triangle:P}
  \triangle_\omega \mathcal{P}(\omega)=-2M_1(\omega)-\int_{-\infty}^{\infty}\mathcal{R}_1(q,\omega)\ud q\qquad \text{: on } \mathbb{S}^2 .
\end{equation}
We have seen that this is always possible because the integral on $\mathbb{S}^2$ on the right hand side vanishes identically. Moreover, $M_1(\omega)$ and $\mathcal{R}_1(q,\omega)$ are determined from $n$ alone.
Then given the solution  $\mathcal{P}(\omega)$ of \eqref{eq:triangle:P} with vanishing mean determined from $n(q,\omega)$ in this way, we can choose $\mathcal{F}_0$ freely up to the condition that for some constant $C$,
\begin{equation}
  \int_{-\infty}^{+\infty}\mathcal{H}_0(q,\omega) \ud q =\mathcal{P}(\omega)+C\,.
\end{equation}

\subsection{The asymptotic solution for radiation fields decaying in the exterior.}
\label{sec:approximate}

We are now ready to define the asymptotic solution, and prove the relevant error estimates for the proof of Theorem~\ref{thm:n}; for an overview of the proof see also the comments following the statement of the theorem on page~\pageref{thm:n}.

We have seen that
\begin{equation} \label{eq:Box:Psi:rad}
\Box \Psi_{rad}+ \frac{\!n(r\!-\!t,\omega)\!\!}{r^2}=R_{rad}\,,
\end{equation}
where by \eqref{eq:Rrad}, and \eqref{eq:H11}  and \eqref{eq:H11:estimate},
\begin{equation}
|R_{rad}|\les \ln{\!\Big|\frac{2\,r}{\tminusr}\Big|}
\frac{\langle q\rangle \chi_{q\leq 0}+\langle q\rangle^{-2\gamma}}{r^4}.
\end{equation}

This shows that the expansion  $\Psi_{rad}$ of \eqref{eq:secondorderapproximatesource2} is a good approximate solution for \eqref{eq:source} close the light cone where $r\sim t$, but when $r-t\sim - r/2$, then $R_{rad}\sim t^{-3}$ which corresponds to a remainder solution of the wave equation decaying only like $t^{-1}$ and which would cause a change to the radiation field.

 On the other hand, we have seen in Section~\ref{sec:interior} that the homogeneous asymptotics
\begin{equation}
    \Psi_{hom}=\Psi_1[N]+\Psi_2[M],
  \end{equation}
is  a solution to the homogeneous equation in the interior away from the light cone, and since $n(q,\omega)$ is decaying in $q$ this is a better approximation for \eqref{eq:source} away from the lightcone, in particular when $r\!-\!t\!\sim\! - r/2$.
In fact,
  \begin{equation}
    \Box\Psi_{hom}+N(\omega)\delta(r-t)/r^2-M(\omega)\delta'(r-t)/r^2=0\,.
  \end{equation}

    Moreover, we have seen in Section~\ref{sec:matching} that while $\Psi_1[N]$ and $\Psi_2[M]$ are determined by $N$, and $M$, respectively, the radiation fields $\mathcal{F}_I$ in the expansion $\Psi_{rad}$ are chosen so that $\Psi_{rad}$ matches the homogeneous asymptotics near the lightcone up to second order.

Therefore we transition between the two approximate solutions in a region where
$r-t\sim - r^a$, for some $0<a<1$ to be determined.
Let $\chi(s)=1$ when $s\geq -1/4$ and $\chi(s)=0$ when $s\leq -3/4$ and let
$\chi_a(t,x)=\chi\big( (r-t)/r^a\big)$ and set our asymptotic solution to be
\begin{equation}\label{eq:Psi:asym}
\Psi_{asym}=(1-\chi_a) \Psi_{hom}+\chi_a \Psi_{rad}\,.
\end{equation}
Then
 \begin{equation}
 \Box \Psi_{asym}+\chi\frac{\!n(r\!-\!t,\omega)\!\!}{r^2}=R_{asym},
\end{equation}
where
\begin{equation}
  \label{eq:R:asym:int}
 R_{asym}=(\chi-\chi_a)\frac{n}{r^2}+\chi_a R_{rad}
 +(\Box \chi_a) \Psi_{diff}+2 Q(\pa\chi_a,\pa\Psi_{diff}).
\end{equation}
Here $Q(\pa\psi,\pa\phi)=m^{\alpha\beta}\pa_\alpha\psi\,\pa_\beta\phi$, and we set
 \beq \Psi_{diff}=\Psi_{rad}-\Psi_{hom}\,.\eq
 Here we pick $\chi=\chi(r/\langle t-r\rangle)$ and $\chi(s)=1$ when $s>3/4$ and $\chi(s)=0$, when $s<1/2$.

 Subtracting the expansions \eqref{eq:homoexpansion} and \eqref{eq:homoexpansion2} from the expansion
 \eqref{eq:secondorderapproximatesource2} we get\footnote{An additional error term is generated by replacing $t-r$ by $\langle t-r\rangle$ in $\Psi_{rad}$. Since $\ln\tfrac{t-r}{\langle t-r\rangle}\sim -\tfrac{1}{2}(t-r)^{-2}$, $t>r$, decays, these terms can be absorbed in the lower order radiation fields.}
\beq \label{eq:secondorderapproximatesource2H}
\Psi_{dif{}_{\!}f}(t,{}_{\!}r\omega)\!
=\ln{\!\Big|\!\frac{2\,r}{\tminusr\!}\Big|}
\frac{\mathcal{H}_{\!01\!}(r\shortminus t,\omega{}_{\!}) \! \! }{r}+\frac{\!\mathcal{H}_{\!0\!}(r\shortminus t,\omega{}_{\!})\!\!}{r}
+\ln{\!\Big|\!\frac{2\,r}{\tminusr\!}\Big|}
\frac{\mathcal{H}_{\!11\!}(r\shortminus t,\omega{}_{\!}) \! \! }{r^2}+\frac{\!\mathcal{H}_{\!1\!}(r\shortminus t,\omega{}_{\!})\!\!}{r^2}
+O\Big(\!\frac{(r\shortminus t)^2\!\!\!}{\,r^3}
\ln{\!\Big|\frac{ 2\, r}{t\shortminus r}\Big|}\Big)\!,
\eq
where $\mathcal{H}_{01}$ is given by \eqref{eq:H01}, and $\mathcal{H}_0$ by \eqref{eq:H0}; moreover,  $\mathcal{H}_{11}$ is given by \eqref{eq:H11}, and $\mathcal{H}_1$ by \eqref{eq:H1}.
The point is that as a result of the matching the radiation fields $\mathcal{H}_I$ all decay in the exterior \emph{and} interior. In fact, by the estimates in Section~\ref{sec:matching}, see  \eqref{eq:angulardecay}, \eqref{eq:F0decay}, \eqref{eq:H11:estimate}, and \eqref{eq:H1:estimate},  we have
\beq\label{eq:H:estimate}
|\mathcal{H}_0|\!+\!\langle q\rangle^{\!-1} |\mathcal{H}_1|
\!+\!|\mathcal{H}_{01}|\!+\!\langle q\rangle^{\!-1} |\mathcal{H}_{11}|
+\langle q\rangle\bigtwo(|\mathcal{H}_{0}^{\,\prime}|\!+\!\langle q\rangle^{\!-1} |\mathcal{H}_{1}^{\,\prime}|
\!+\!|\mathcal{H}_{01}^{\,\prime}|\!+\!\langle q\rangle^{\!-1} |\mathcal{H}_{11}^{\,\prime}|\bigtwo) \les \langle q\rangle^{\!-\kappa}.
\eq

  Let us now return to \eqref{eq:Psi:asym}.
  First note that in the interior where $\chi_a=0$, we have $\Psi_{asym}=\Psi_{hom}$, and $\Box\Psi_{hom}=0$.
  Secondly when $\chi_a=1$, namely across the lightcone and in the exterior, we have $\Psi_{asym}=\Psi_{rad}$ and $\Psi_{rad}$ satisfies \eqref{eq:Box:Psi:rad}.
  In view of the assumption \eqref{eq:ndecay} on $n$,
\beq\label{eq:Rrada}
\big|\chi_a R_{rad}\big|
\les  \frac{\ln r }{r^{4}}\,\big(|q|\, \chi_a^{-}
+\langle q\rangle^{-2\gamma}\,\chi_{q\geq 0}\big)\,,
\eq
where we denote by $\chi_a^{-}=\chi((r-t)/(4r^a))\chi_{q\leq 0}$, so that $\chi_a^{-}=1$ for $-r^a<q<0$.

 We also need to estimate the difference $\Psi_{diff}$ for  $r-t\sim -r^a$.
By \eqref{eq:secondorderapproximatesource2H}, and \eqref{eq:H:estimate} we have
\begin{gather}
|\Psi_{dif{}_{\!}f}(t,r\omega{}_{\!})|\les r^{-1-\kappa a}\:\ln r\,, \qquad
|(\pa_t\!+\pa_r\!) \Psi_{dif{}_{\!}f}(t,r\omega{}_{\!})|\les r^{-2-\kappa a}\:\ln r,\\
|(\pa_t{}_{\!}-\pa_r\!) \Psi_{dif{}_{\!}f}(t,r\omega{}_{\!})|\les r^{-1-a(\kappa+1)}\:\ln r
\end{gather}
and
\beq
\big|\Box \chi_a \big|\les r^{-1-a}, \qquad
|(\pa_t+\pa_r)\chi_a|\les r^{-1},\qquad
|(\pa_t-\pa_r) \chi_a|\les r^{-a} .
\eq
Hence
\beq
\big|\Box \chi_a \big||\Psi_{dif{}_{\!}f}(t,r\omega)|\les r^{-2-a(1+\kappa)} \:\ln r\,,
\eq
and we also have
\beq
\big| Q(\pa\chi_a,\pa\Psi_{dif{}_{\!}f})\big|
\les \big|(\pa_t\!+\!\pa_r) \chi_a\big|\, \big|(\pa_t\!-\!\pa_r) \Psi_{diff}\big|
+\big|(\pa_t\!-\!\pa_r) \chi_a\big|\, \big|(\pa_t\!+\!\pa_r) \Psi_{diff}\big|
\les r^{-2-a(1+\kappa)}\:\ln r\,.
\eq
Since we also have that $|q|\sim r^a$ in the support of $\partial \chi_a$ we can estimate
\beq
\big|\Box \chi_a \big||\Psi_{dif{}_{\!}f}(t,r\omega)|
+\big| Q(\pa\chi_a,\pa\Psi_{dif{}_{\!}f})\big|\les \frac{\ln r }{r^{4}}\,|q|\, \chi_a^{-} r^{2-a(2+\kappa)}
\eq
This is equal to the first term in \eqref{eq:Rrada} if we pick $a$ so that $a(2+\kappa)=2$ and there is no advantage to picking a smaller or larger $a$ so that is the $a$ that we will pick.
Moreover,  if $\gamma\geq 1/2$ then $\kappa=2$ so $a=1/2$.
In conclusion,
\beq\label{eq:Rasymest}
\big|R_{asym}\big|
\les  \frac{\ln r }{r^{4}}\,\big(|q|\, \chi_{a}^{-}
+\langle q\rangle^{-2\gamma}\,\chi_{q\geq 0}\big)+\frac{1}{t^2\langle q\rangle^{2+2\gamma}}\chi_{ -r/2<q<-r^a},\qquad \text{if}\quad a\geq 2/(2+\kappa).
\eq

\subsection{Estimate of the remainder}
\label{sec:remainder}

For the proof of Theorem~\ref{thm:n}, it remains to show that $R_{asym}$ produces a sufficiently fast decaying remainder.
Let $E_-=\delta(t^2-|x|^2)H(-t)/2\pi$ be the backward fundamental solution of $-\Box$. Then in view of \eqref{eq:Rasymest} the convolution
\beq
\psi_{rem}=E_-*R_{asym}
\eq
is well defined,  and   we show in Appendix~\ref{sec:RadialEstimates} that
\begin{equation}
  |\psi_{rem}|\lesssim \frac{\log\tplusr}{\tplusr^2}
\end{equation}
In particular, the radiation field of  $\psi_{rem}$  vanishes.

\subsection{Matching of the interior asymptotics for sources with cubic terms.}
\label{sec:matching:cubic}

As described in Section~\ref{sec:cubic:intro} we can proceed similarly for the proof of Theorem~\ref{thm:m}. We focus here on matching the interior homogeneous asymptotics for the cubic equation to the radiation field at null infinity, and deriving the compatibility condition. The asymptotic solution can then be defined and estimated as in Sections~\ref{sec:approximate} and~\ref{sec:remainder} above.

For \eqref{eq:highersourcbegining}, we have the higher order expansion near null infinity
\beq \label{eq:approximatesource2higherorder}
\phi(t,r\omega)
\sim \!\Psi_{{}_{\!}rad}^3(r\!-\!t,\omega,{}_{\!}1{}_{\!}/r{}_{\!}){}_{\!}=
\,\frac{\mathcal{G}_{0}(r\!-\!t,\omega) \! }{r}+\frac{\mathcal{G}_1(r\!-\!t,\omega)\!}{r^2},\qquad\text{as}\quad t\to\infty,\quad \text{while}\quad r\!\sim t\,.
\eq
By the calculations in Section~\ref{sec:higherordernull}  up to terms of order $1/r^4$,
\beq
\Box \Psi_{rad}^3\sim-\frac{\!2\mathcal{G}_{\!1}^{\,\prime}(r\shortminus t,\omega{}_{\!}) \! \!}{r^3}+\frac{\triangle_\omega \mathcal{G}_0(r\!-\!t,\omega)\!}{r^3}\,,
\eq
hence we must have
\beq\label{eq:de:G1}
-2\mathcal{G}_{\!1}^{\,\prime}(q,\omega{}_{\!}) +\triangle_\omega \mathcal{G}_0(q,\omega)
= -m(q,\omega).
\eq

We can read off from \eqref{eq:k3homsolformulaasym} the matching conditions that have to be satisfied
  \begin{equation}
    \label{eq:G0:limit}
    \lim_{q\to -\infty} q\,\mathcal{G}_0(q,\omega)=-M_0\,,
  \end{equation}
  and so let us define
  \begin{equation}
    \widetilde{\mathcal{G}}_0(q,\omega)=\mathcal{G}_{0}(q,\omega)- M_{0}\langle q\rangle^{-1}\chi_{q\leq  0}\,.
  \end{equation}
  We will  assume that
\beq \label{eq:G0decay}
 \big|(\langle q\rangle \pa_q)^k\pa^\alpha_\omega \widetilde{\mathcal{G}}_0\big|
 \les \langle q\rangle^{-\kappa},\,\qquad 1<\kappa\leq 1+\min{(1,2\gamma)}\,.
\eq
Secondly $ \mathcal{G}_1(q,\omega)$ has to satisfy
\beq
\lim_{q\to -\infty}  \mathcal{G}_1(q,\omega)=-M_0/2
\eq
and it follows from integrating \eqref{eq:de:G1} that
\begin{equation}
  \label{eq:G1:integrate}
  \begin{split}
    M_0/2+\mathcal{G}_{1}(q,\omega)=&\frac{1}{2}\int_{-\infty}^q \Bigl[\triangle_\omega \widetilde{\mathcal{G}}_0(s,\omega)+m(s,\omega)\Bigr]\ud s\\
    =& -\frac{1}{2}\int_q^\infty\Bigl[\triangle_\omega\widetilde{\mathcal{G}}_0(s,\omega)+m(s,\omega)\Bigr]\ud s+\frac{1}{2}\int_{-\infty}^{+\infty}\Bigl[\triangle_\omega \widetilde{\mathcal{G}}_0(s,\omega)+m(s,\omega)\Bigr]\ud s ,
  \end{split}
\end{equation}
 where we used  that $\triangle \mathcal{G}_0=\triangle \widetilde{\mathcal{G}}_0$ because the limit \eqref{eq:G0:limit} does not depend on $\omega$.
  Assuming sufficient decay
   \begin{equation}
    \big|(\langle q\rangle \pa_q)^k\pa^\alpha_\omega m(q,\omega)\big|\les  \langle q\rangle^{-1-2\gamma}\,,
  \end{equation}
 we see from the first equality in \eqref{eq:G1:integrate} that
  \begin{equation}
    \widetilde{\mathcal{G}}_1(q,\omega)=\mathcal{G}_{1}(q,\omega) +(M_0/2)\chi_{q\leq 0} ,
  \end{equation}
  decays in the interior, $\widetilde{\mathcal{G}}_1\les \langle q\rangle^{-\kappa+1}$ as $q\to-\infty$ by \eqref{eq:G0decay}, in fact
  \beq 
 \big|(\langle q\rangle \pa_q)^k\pa^\alpha_\omega \widetilde{\mathcal{G}}_{1}(q,\omega)\big|
 \les \langle q\rangle^{-\kappa+1} \,.
\eq

  However, from the second equality in \eqref{eq:G1:integrate} we see that $\mathcal{G}_1$, or $\widetilde{\mathcal{G}}_1$, only  decays in the exterior and satisfies
\beq
\lim_{q\to \infty}  \mathcal{G}_1(q,\omega)=0 ,
\eq
provided the following  condition is satisfied
\begin{equation}
  M_0/2=\frac{1}{2}\int_{-\infty}^{+\infty}\Bigl[\triangle_\omega \widetilde{\mathcal{G}}_0(s,\omega)+m(s,\omega)\Bigr]\ud s .
\end{equation}
This can be viewed as an additional condition on $\widetilde{\mathcal{G}_0}(q,\omega)$:
\begin{equation}
\triangle_\omega \Bigl[\int_{-\infty}^{+\infty}  \widetilde{\mathcal{G}}_0(q,\omega)
\, dq\Bigr] =-M(\omega)+ M_0\,,\qquad M(\omega)=\int_{-\infty}^{+\infty}m(q,\omega)\ud q .
\end{equation}
Note that this equation has an integrability condition
\begin{equation}
  0=\frac{1}{4\pi}\int_{\mathbb{S}^2}(-M(\omega)+M_0)\ud S(\omega)=-M_0+M_0=0 ,
\end{equation}
which is indeed always  satisfied.

In conclusion, for given $m$, first solve the equation
\begin{equation}
  \triangle_{\omega}\mathcal{Q}(\omega)=-M(\omega)+M_0\,,
\end{equation}
then $\widetilde{\mathcal{G}_0}$ is free to choose
up to the condition that
\begin{equation}
  \int_{-\infty}^{+\infty}  \widetilde{\mathcal{G}}_0(q,\omega)\, dq=\mathcal{Q}(\omega)\,.
\end{equation}

\section{Homogeneous solutions to the homogeneous equation in the exterior of the lightcone}
\label{sec:hom:ext}

In this section we extend the scattering construction to the situation when
\begin{equation}
  \lim_{q\to\infty}\mathcal{F}_{01}(q,\omega)=N_{01}(\omega)\qquad \lim_{q\to\infty}\mathcal{F}_{0}(q,\omega)=N_0(\omega)\,,
\end{equation}
by relating these limits to homogeneous solutions of the wave equation in the exterior of the light cone.

In particular we give the proofs of Theorem~\ref{thm:hom:N} and Corollary~\ref{thm:hom:M} as outlined in Section~\ref{sec:hom:ext:intro}.

\subsection[Solutions to the wave equation with homogeneous data of degree $-1$.]{Solutions to the wave equation with homogeneous data of degree $-1$ and exterior homogeneous asymptotics.}
\label{sec:hom:ext:one}

 We first derive an expansion for the exterior homogeneous solution
\beq \label{eq:hom:ext:one}
\Box \,\phi_1=0,\quad |x|>t,\qquad \phi\,\Big|_{t=0}=f=M(\omega)/r,\quad \pa_t\phi\, \Big|_{t=0}=g=N(\omega)/r^2\,,
\eq
close to the light cone. In spherical symmetry, when $M$ and $N$ are constants, we have the explicit solution
\begin{equation}
\label{eq:homogeneous:ex}
\phi_1 =\frac{M}{r}+\frac{N}{2r} \ln{\!\Big|\!\frac{\,r\!+\!t}{r\shortminus t}\Big|}\,.
\end{equation}

We will first compute the expansion of \eqref{eq:hom:ext:one} in $(r-t)/r$ and $r$, as $r\to\infty$, and then take the limit $r/t\to 1$.
We will compute the coefficients in the expansion explicitly and show that the transformation that maps the data $(M,N)$ to the leading order coefficients is invertible.
This allows us to prove that, given any smooth functions $N_0(\omega)$ and $N_{01}(\omega)$ we can find smooth data $M(\omega)$ and $N(\omega)$ such that
close to the light cone
 \beq
\phi_1 \sim\frac{N_0(\omega)}{r}+\frac{N_{01}(\omega)}{r} \ln{\!\Big|\!\frac{\,r\!+\!t}{r\shortminus t}\Big|}\qquad (r/t>1,r\to\infty).
\eq

\subsubsection{Homogeneous initial data for the time-derivative.}
\label{sec:hom:ext:one:time}
In Appendix~\ref{sec:formula:ext:hom} we derive that the  solution to the initial value problem with homogeneous initial data,
\beq
\Box \,\phi=0,\quad |x|>t,\qquad \phi\,\Big|_{t=0}=f=0,\quad \pa_t\phi\, \Big|_{t=0}=g=N(\omega)/r^2\,,
\eq
is given by
\begin{equation}
  \label{eq:phi:Psi10:ext}
  \phi=\Psi_{1,0}^{\text{ext}}[N]=\frac{1}{r}\int_{z_0}^1\frac{N(\omega,z)\ud z}{\sqrt{z^2-z_0^2}}\,, \quad r>t\,,\quad z_0=\sqrt{1-(t/r)^2}\,.
\end{equation}
Here $N(\omega,z)$ is the average of $N(\sigma)$ over the circle $\langle \sigma,\omega\rangle=z$ as defined in \eqref{eq:avg},
\begin{equation}
N(\omega,z)\!=\!\int_{\langle \sigma,\omega\rangle =z}\!\!\!\!\! \!\! \!  N(\sigma) \,\ud s(\sigma) \Big/\!\!\int_{\langle \sigma,\omega\rangle =z} \!\!\!\! \!\! \!  \ud s(\sigma) \,.
\end{equation}

It turns out that the transformation $N\mapsto\Psi_{1,0}[N]$ in \eqref{eq:phi:Psi10:ext} invertible:  The source function $N(\omega)$ can be determined from the homogeneous solution by an argument that we include for completeness in Appendix~\ref{sec:abel}.

In fact, it suffices to know the leading order terms in the expansion of the homogeneous solution near the lightcone to determine  the initial data $N$ and hence the entire homogeneous solution $\Psi_{1,0}^{\text{ext}}$ in the exterior.
In Appendix~\ref{sec:ext:hom:expansion:leading} we derive that the leading order terms in the expansion are given by
 \begin{equation} \label{eq:Psi10:exp}
\Psi^{\,\text{ext}}_{1,0}[N]\sim \frac{1}{2r}\ln{\frac{2r}{r\shortminus t}}\,N(\omega,0) +\frac{1}{r}\widetilde{N}(\omega),
 \end{equation}
 where
 \beq
 N(\omega,0) = \frac{1}{2\pi} \int_{\langle \sigma,\omega\rangle =0}\!\!\!\!\! \!\! \!  N(\sigma) \,\ud s(\sigma),\quad\text{and}\quad
 \widetilde{N}(\omega)=\int_{0}^1 \frac{ N(\omega,z ){}_{\!}-N(\omega,0)}{z}\ud z .
 \eq
 The function $N(\omega,0)$ is  equal to the  \textbf{Funk transform} $\mathcal{F}[N](\omega)$,
 \begin{equation}
   \mathcal{F}[N](\omega)=   \frac{1}{2\pi} \int_{\langle \sigma,\omega\rangle =0}\!\!\!\!\! \!\! \!  N(\sigma) \,\ud s(\sigma)\,,
 \end{equation}
a function on the sphere which associates to each point $\omega$ the integral of $N$ over the  circle $C(\omega)=\{\sigma:\langle \sigma,\omega\rangle=0\}$.
 This is the spherical analogue of the Radon transform; see e.g.~\cite{G76,H99}.
 The  map $N\to\mathcal{F}[N]$ has a kernel consisting of all odd functions, namely those with $N(\omega)+N(-\omega)=0$. For symmetric functions $N(\omega)$ on the sphere, namely even functions with $N(\omega)=N(-\omega)$, the Funk transform  is invertible, and there is a reconstruction formula for $N$ in terms of its spherical averages; see Appendix~\ref{sec:transform:sphere} and \ref{sec:funk:geometric}.

 Note that for odd functions on the sphere,
 \begin{equation}
  \widetilde{N}(\omega)=\mathcal{S}[N](\omega)\,,\qquad N(\omega)=-N(-\omega)\,,
\end{equation}
where
\begin{equation}
  \mathcal{S}[N](\omega)=\frac{1}{2}\int_{-1}^1 \frac{N(\omega,z)-N(\omega,0)}{z} \ud z\,.
\end{equation}
The transformation $N\mapsto\mathcal{S}[N]$ has a kernel consisting of all even functions, and $\mathcal{S}$ maps odd functions to odd functions on the sphere. Moreover the map $N\mapsto\mathcal{S}[N]$ is invertible on odd functions as discussed in Appendix~\ref{sec:transform:sphere}.
In summary,
 \begin{equation} \label{eq:Psi10:Funk}
\Psi^{\,\text{ext}}_{1,0}[N](t,r\omega) \sim \frac{1}{2r}\ln{\frac{2r}{r\shortminus t}}\,\mathcal{F}[N](\omega) +\frac{1}{r}\big(\widetilde{N_+}(\omega)+\mathcal{S}[N](\omega) \big),
 \end{equation}
where we have decomposed $N$ into its even and odd part:
\begin{equation}
  N=N_++N_-\,, \qquad N_+(\omega)=\frac{1}{2}\bigl(N(\omega)+N(-\omega)\bigr)\,,\qquad N_-(\omega)=\frac{1}{2}\bigl(N(\omega)-N(-\omega)\bigr)\,.
\end{equation}

Note that if $N(\omega)=1$, then $\mathcal{F}[N]=1$, and $\widetilde{N_+}+\mathcal{S}[N]=0$, in agreement with the explicit example of \eqref{eq:homogeneous:ex}.

\subsubsection{Homogeneous initial data for the solution.}
\label{sec:hom:ext:deg:one}

In Appendix~\ref{sec:formula:ext:hom} we also derive that the  solution to the initial value problem
\beq
\Box \,\phi=0,\quad |x|>t,\qquad \phi\,\Big|_{t=0}=f=\frac{M(\omega)}{r},\quad \pa_t\phi\, \Big|_{t=0}=g=0\,,
\eq
is given by
\begin{equation}
  \phi=-\frac{x^i}{t}\Psi_{1,0}^{\text{ext}}[M_i]\,,\qquad \text{ where }\partial_i\bigl(M(\omega) r^{-1}\bigr)=M_i(\omega) r^{-2}\,.
\end{equation}
In view of \eqref{eq:Psi10:exp} we can infer that
\begin{equation}
  \phi  \sim - \frac{1}{2t}\ln{\frac{2r}{r\shortminus t}}\, \omega^i\mathcal{F}[M_i](\omega) -\frac{1}{t}\omega^i \widetilde{{M_i}}(\omega)\,.
\end{equation}

The transformation $M\mapsto \mathcal{G}[M]$ given by
\begin{equation}
\mathcal{G}[M](\omega)=\omega^i \mathcal{F}[M_i](\omega)
 \end{equation}
 vanishes on even functions, and is invertible as a transformation from odd functions to odd functions on the sphere.
In fact, an explicit reconstruction formula is given in \cite{BEGM03}.

 We may express this transformation more explicitly using that
 \begin{equation}
  \partial_i\bigl(M(\omega)r^{-1}\bigr)=-\frac{M(\omega)}{r^{2}}  \frac{x^i}{|x|}+\frac{1}{r^2}\nang_i M(\omega)\,,\qquad M_i(\omega)=-M(\omega)\omega^i+\nang_i M(\omega)\,,
\end{equation}
where $\nang_i$ are the cartesian components of the gradient on the standard sphere $\mathbb{S}^2$,
so $\Pi_i^j\nabla_j M=\frac{1}{r} \nang_iM$, where $\Pi_i^j(\omega)=\delta_i^j-\omega^i \omega^j$ is the projection to the tangent plane to $\mathbb{S}^2$.

Let us denote for brevity by
\begin{equation}
  \widecheck{M}_i(\sigma)=M(\sigma)\sigma^i\,,\qquad\text{so }M_i=-\widecheck{M}_i+\nang_iM\,.
\end{equation}
Since $\mathcal{F}[\widecheck{M}_i](\omega)=\int_{\langle \sigma,\omega\rangle =0} M(\sigma)\sigma^i \ud s(\sigma)$ are the components of a vector orthogonal to $\omega$, we see that $\omega^i\mathcal{F}[\widecheck{M}_i]=0$
and hence it follows  that
\begin{equation}
  \mathcal{G}[M](\omega)=\omega^i\mathcal{F}[M_i](\omega)=\omega^i  \mathcal{F}[\nang_i M](\omega)=\frac{1}{2\pi}\int_{\langle \sigma,\omega\rangle=0} \langle \omega,\nang M(\sigma)\rangle \ud s(\sigma)=\frac{1}{2\pi}\int_{\langle \sigma,\omega\rangle=0}\frac{\partial M}{\partial n} \ud s(\sigma)\,.
\end{equation}
Hence  $\mathcal{G}[M](\omega)$ can be viewed as the Funk transform of the normal derivative \cite{G76}.
 Alternatively the transformation can also be viewed as a Funk transform of the 1-form ${}^\ast\ud M$,
 and a characterisation of the kernel has been given in \cite{G76, M78}.

We discuss in Appendix~\ref{sec:transform:sphere} the relevant statements about the transformation $M\mapsto\mathcal{G}[M]$ which are proven in  \cite{BEGM03}, among which the most important states that for odd functions on the sphere
 \begin{equation}
   \label{eq:GS}
   \mathcal{G}\circ\mathcal{S}=\mathrm{id}\,,\qquad    \mathcal{S}\circ\mathcal{G}=\mathrm{id}\,.
 \end{equation}

 Next consider the transformation
 \begin{equation}
   \omega^i\widetilde{M_i}(\omega)=\int_{0}^1 \frac{\omega^i M_i(\omega,z ){}_{\!}-\omega^i M_i(\omega,0)}{z}\ud z .
 \end{equation}
By linearity of the average we have  $\omega^iM_i(\omega,z)=-\omega^i\widecheck{M_i}(\omega,z)+\omega^i(\nang_i M)(\omega,z)$, and we compute
 \begin{equation}
   \omega^i \widecheck{M}_i(\omega,z) = \omega^i \!\int_{\langle \sigma,\omega\rangle =z}\!\!\!\!\! \!\! \!  M(\sigma) \sigma^i \,\ud s(\sigma) \Big/\!\!\int_{\langle \sigma,\omega\rangle =z} \!\!\!\! \!\! \!  \ud s(\sigma) = z M(\omega,z)\,.
 \end{equation}
 and
 \begin{equation}
  \label{eq:91}
\omega^i (\nang_i M)(\omega,z)= \int_{\langle \omega,\sigma\rangle=z}\langle \omega, \nang M(\sigma)\rangle \ud s(\sigma) \Big/2\pi \sqrt{1-z^2}= \frac{1}{2\pi}\int_{\langle \omega,\sigma\rangle=z} \frac{\partial M}{\partial n}(\sigma)\ud s(\sigma)
\end{equation}
where we used that on the circle $\langle \omega,\sigma\rangle=z$ we have $\omega=z \sigma+\sqrt{1-z^2}\,n$; here $n$ is the  the unit normal to the circle $\langle \omega,\sigma\rangle=z$, $\langle n, n\rangle=1$, tangential to the sphere $\langle \sigma,n\rangle=0$, with $\langle n,\omega\rangle>0$.

\paragraph{Even case.} Suppose first that $M(\omega)=M(-\omega)$ is an even function on the sphere.
Then  in view of the formulas derived above  $\omega^iM_i(\omega,z)$ is an odd function in $z$, hence
 \begin{equation}
   \omega^i\widetilde{M_i}(\omega)=\frac{1}{2}\int_{-1}^1 \frac{\omega^iM_i(\omega,z)-\omega^i M_i(\omega,0)}{z} \ud z=\omega^i\mathcal{S}[M_i]\,.
 \end{equation}
It is shown in \cite{BEGM03} that the transformation
$M\mapsto\mathcal{T}[M]$ given by
  \begin{equation}
    \label{eq:78}
    \mathcal{T}[M](\omega)=-\omega^i\mathcal{S}[M_i](\omega)\,,
  \end{equation}
  sends even functions to even functions on the sphere, and vanishes on odd functions.
  We will discuss this transformation further in Appendix~\ref{sec:transform:sphere},
  but state already that it is proven in \cite{BEGM03} that on even functions $\mathcal{T}$ is the inverse of the Funk transform,
  \begin{equation}
    \label{eq:TF}
    \mathcal{T}\circ\mathcal{F}=\mathrm{id}\,,\qquad \mathcal{F}\circ\mathcal{T}=\mathrm{id}\,.
  \end{equation}

  \paragraph{Odd case.} For odd functions $M(\omega)=-M(-\omega)$, we see that $\omega^i M_i(\omega,z)$ is an even function in $z$, and
  \begin{equation}
    \omega^i M_i(\omega,0)=0\,.
  \end{equation}
Hence in this case
\begin{equation}
  \label{eq:70}
     \omega^i\widetilde{M_i}(\omega)=\int_{0}^1 \frac{\omega^i M_i(\omega,z ){}_{\!}}{z}\ud z = - \int_0^1 M(\omega,z)\ud z+\frac{1}{2\pi}\int_0^1 \frac{1}{z} \int_{\langle \omega,\sigma\rangle=z} \frac{\partial M}{\partial n}(\sigma)\ud s(\sigma) \ud z
   \end{equation}
   and we take the right hand side as the definition of a function  $\mathcal{U}[M](\omega)$.
   Note that $\mathcal{U}[M](-\omega)=-\mathcal{U}[M](\omega)$ is an odd function.

   In summary, given any function $M(\omega)$ we decompose into even and odd parts,
   \begin{equation}
     M(\omega)=M_+(\omega)+M_-(\omega)\,,
   \end{equation}
   and conclude that
   \begin{equation}
     \omega^i\widetilde{M_i}(\omega)=-\mathcal{T}[M_+](\omega)+\mathcal{U}[M_-](\omega)\,.
   \end{equation}
   and note that alternatively $\mathcal{T}[M_+]=\mathcal{T}[M]$ because odd functions are in the kernel of $\mathcal{T}$.

Therefore the leading orders in the expansion are
\begin{equation}
  \label{eq:Psi10:FunkInverse}
   \phi\sim -\frac{1}{2t}\ln{\frac{2r}{r\shortminus t}}\,\mathcal{G}[M] -\frac{1}{t}\big(\mathcal{U}[M_-]-\mathcal{T}[M] \big)\,.
 \end{equation}
 Note that if $M(\omega)\!=\!1$ then $\mathcal{G}[M]\!=\!0$, $\mathcal{U}[M_-]\!=\!0$, and $\mathcal{T}[M]\!=\!1$, which yields $\phi\sim 1/t$ in agreement with \eqref{eq:homogeneous:ex}.

 \subsubsection{Solvability of the inverse problem.}
In conclusion, given an expansion of the  form
\begin{equation}
  \phi\sim \frac{1}{2r}\ln{\frac{2r}{r\shortminus t}}\, N_{01}(\omega) +\frac{1}{r} N_0(\omega)
\end{equation}
the data of the corresponding homogeneous solution is determined as follows:
Given that for any data $M(\omega)$ and $N(\omega)$ the asymptotics of the homogeneous solution is given by  \eqref{eq:Psi10:Funk} and \eqref{eq:Psi10:FunkInverse}, we obtain the condition
\begin{equation}
  N_{01}=\mathcal{F}[N]-\mathcal{G}[M]=(N_{01})_++(N_{01})_-\,.
\end{equation}
Thus the even part of $N$ is determined by the even part of $N_{01}$, and the odd part of $M$ is determined by the odd part of $N_{01}$:
\begin{equation}
  \label{eq:NM:det}
  N_+=\mathcal{T}[N_{01}]\qquad M_-=-\mathcal{S}[N_{01}]\,.
\end{equation}

Using the formulas derived for the coefficients at order $r^{-1}$ in \eqref{eq:Psi10:Funk} and \eqref{eq:Psi10:FunkInverse}, we get the second condition
\begin{equation}
  \label{eq:N0:MN}
N_0(\omega)=  \widetilde{N_+}(\omega)+\mathcal{S}[N](\omega)-\mathcal{U}[M_-](\omega) +\mathcal{T}[M](\omega)\,.
\end{equation}
Since $N_+$ and $M_-$ are already determined by \eqref{eq:NM:det}, and we can view this as an equation for $N_-$ and $M_+$:
  \begin{equation}
    \mathcal{S}[N_-] +\mathcal{T}[M_+]=N_0-\widetilde{N_+}+\mathcal{U}[M_-]
  \end{equation}
  Since the image of $\mathcal{S}$ are odd functions, and the image of $\mathcal{T}$ are even functions, we obtain
  \beq
    \mathcal{S}[N_-] = (N_0)_-+\mathcal{U}[M_-],\qquad
    \mathcal{T}[M_+] = (N_0)_+-\widetilde{N_+}
  \eq
Both equations are invertible by \eqref{eq:GS} and \eqref{eq:TF} and thus $M_+$ and $N_-$ uniquely determined by
  \begin{equation}
    M_+=\mathcal{F}[{N_0}_+-\widetilde{N_+}],\qquad N_-=\mathcal{G}[(N_0)_-+\mathcal{U}[M_-]\,.
  \end{equation}

  Note that in the absence of logarithmic terms in the expansion, so for $N_{01}=0$, we have $N_+=M_-=0$ and the formulas reduces to:
  \begin{equation}
    M=\mathcal{F}[N_0],\qquad  N=\mathcal{G}[N_0]\,.
  \end{equation}

  \subsection[Solutions to the wave equation with homogeneous data of degree $-2$.]{Solutions to the wave equation with homogeneous data of degree $-2$ and exterior homogeneous asymptotics.}
\label{sec:hom:ext:two}.

  In spherical symmetry,
  \begin{equation}
    \label{eq:phi2:explicit}
    \phi_2=\frac{C}{r(r+t)}
  \end{equation}
  is an explicit solution to the wave equation in the exterior $|x|>t$, which is homogeneous of degree $-2$.

  More generally, these solutions can be obtained as solutions to the initial value problem in the exterior of the light cone, with homogeneous data of degree $-2$:
  \begin{equation}
    \label{eq:hom:ext:two}
\Box \,\phi_2=0,\quad |x|>t,\qquad \phi\,\Big|_{t=0}=K(\omega)/r^2\,,\quad \pa_t\phi\Big|_{t=0}=L(\omega)/r^3\,.
\end{equation}

We will show that it is always possible to find data $K$, and $L$, so that the expansion of $\phi_2$ near the lightcone matches prescribed asymptotics.

\subsubsection{Time-derivatives of homogeneous degree $-1$ solutions.}
\label{sec:hom:time}

In Section~\ref{sec:hom:ext:deg:one}  we have already found the expansion of solutions to \eqref{eq:hom:ext:one}.
Since the time-derivative $\psi=\partial_t\phi_1$ of the solution to \eqref{eq:hom:ext:one} solves the problem
  \begin{equation}
    \label{eq:hom:ext:onetime}
    \Box \psi=0\qquad |x|>t\,, \quad \psi\rvert_{t=0}=N(\omega)/r^2\,,\quad \partial_t\psi\rvert_{t=0}=(\triangle_\omega M)(\omega)/r^3\,,
  \end{equation}
  we obtain the expansion of the solutions to \eqref{eq:hom:ext:two},
  in the case $K(\omega)=N(\omega)$, and $L(\omega)=\triangle_\omega M(\omega)$,
  by taking the time-derivative of the  expansion  \eqref{eq:homoexpansion} near the lightcone,
\begin{equation}
\psi=\partial_t \phi_1 \sim N_{01}{}_{\!}(\omega{}_{\!})\frac{1}{r}\frac{1}{r-t}
-N_{11}(\omega) \frac{1}{r^2}\ln\frac{2r}{r-t}  +\frac{1}{r^2} \Bigl(N_{11}(\omega)-N_1(\omega)\Bigr)\,.
\end{equation}
where
\begin{equation}
  \label{eq:N01:rep}
    N_{01}(\omega)=  \frac{1}{2}\Bigl(\mathcal{F}[N]-\mathcal{G}[M]\Bigr),
\end{equation}
and  $N_{11}$ and $N_1$ satisfy \eqref{eq:homocond}, and \eqref{eq:N1}.

In other words, $\psi$ has an expansion of the form \eqref{eq:homoexpansion2}, where
\begin{align}
  \label{eq:32}
  M_0(\omega)=&N_{01}(\omega),\qquad
  M_{11}(\omega)=-N_{11}(\omega)=-\frac{1}{2}\triangle_\omega N_{01}(\omega)\,,\\
  M_1(\omega)=&N_{11}(\omega)-N_1(\omega)=-\frac{1}{2}\Delta_\omega N_{01}(\omega)-\frac{1}{2}\Delta_\omega N_0+\frac{1}{2}N_{01}(\omega) \,. \label{eq:M0:N0}
\end{align}
Thus given $M_0(\omega)$ we first determine the even part of $N$, and the odd part of $M$, so that \eqref{eq:N01:rep} holds. Then the expansion of \eqref{eq:hom:ext:two} with $K=N$ and $L=\triangle_\omega M$, has the prescribed leading order term $M_0(\omega)/r(r-t)$.

It remains to solve \eqref{eq:M0:N0} for any given function $M_1(\omega)$.
Since \eqref{eq:M0:N0} as an equation for $N_0$ has the integrability condition
\begin{equation}
  \overline{M_1}=\frac{1}{2}\overline{N_{01}}\,,
\end{equation}
we first solve \eqref{eq:M0:N0} with $M_1$ replaced by $M_1-\overline{M_1}+\frac{1}{2}\overline{N_{01}}$.
Then $N_0$ is determined, which in turn determines the even part of $M$, and the odd part of $N$  as in Section~\ref{sec:hom:ext:deg:one} so that \eqref{eq:N0:MN} holds.
Thus we have found $M$, and $N$, so that the solution to \eqref{eq:hom:ext:onetime} has the following expansion near the lightcone:
\begin{equation}
  \label{eq:36}
  \psi\sim \frac{\!M_{0}(\omega{}_{\!})\!}{r^2}\frac{r}{\!r\shortminus t\!}
-\frac{\triangle_\omega M_0}{2r^2}\ln{\!\Big|{}_{\!}\frac{ 2\, r}{t\shortminus r}{}_{\!}\Big|}
+ \frac{M_{1}(\omega)-C_1}{r^2}\,,\qquad C_1=\overline{M_1}-\frac{1}{2}\overline{M_0}\,.
\end{equation}

The remaining term $C_1/r^2$ is picked up by the solution \eqref{eq:phi2:explicit}.
Note that  \eqref{eq:phi2:explicit} corresponds to $K(\omega)=C$, and $L(\omega)=-C$, which is not in the range of \eqref{eq:hom:ext:onetime}, and  \emph{not} a time-derivative of a homogeneous solution.

\subsubsection{Explicit transformation formulas}

Alternatively, it is possible to compute the asymptotic form of solutions to \eqref{eq:hom:ext:two} directly.
Since the argument in Section~\ref{sec:hom:time} suffices for our purposes, we only give a brief presentation of the explicit formulas.

As seen in Appendix~\ref{sec:formula:ext:hom:lower},
the solution to \eqref{eq:hom:ext:two} in the exterior $|x|>t$ is given by
\beq 
\phi_2=-\Psi^{\,\text{ext}}_{2,1}[K]-x^i t^{-1}\Psi^{\,\text{ext}}_{2,0}[K_i]+\Psi^{\text{ext}}_{2,0}[L]\,,\qquad\text{where}\quad K_{i}(\omega)r^{-3}=\pa_i \big( K(\omega)r^{-2}\big).
\eq
where
\beq
\Psi^{\,\text{ext}}_{2,0}[L]=\frac{1}{2\pi( r^2-t^2)}\int_{\langle \omega,\sigma\rangle>z_0} \,\frac{  \langle \omega,\sigma\rangle L(\sigma )\, dS(\sigma)}{\sqrt{\langle \omega,\sigma\rangle^2-z_0^2}}\,,\quad \Psi_{2,1}^{\text{ext}}[K]=\frac{1}{t}\Psi_{1,0}^{\text{ext}}[K]\,, \quad z_0=\sqrt{1-(t/r)^2}\,.
\eq

The asymptotic expansion of $\Psi_{1,0}^{\text{ext}}$ is already computed in Section~\ref{sec:hom:ext:one:time}, hence
\begin{equation}
    \label{eq:42}
    \Psi_{2,1}^{\text{ext}}[K]=\frac{1}{t}\Psi_{1,0}^{\text{ext}}[K]\sim \frac{1}{2r^2}\ln\frac{2r}{r-t}\mathcal{F}[K]+\frac{1}{r^2}\widetilde{K}\,.
  \end{equation}
  Moreover for $\Psi_{2,0}^{\text{ext}}[L]$ we find in Appendix~\ref{sec:formula:ext:hom:lower:precise} that
       \begin{equation}
   \label{eq:Psi20:sim}
   \Psi_{2,0}^{\text{ext}}[L]\sim \frac{1}{2r}\frac{1}{r-t}\mathcal{M}_+[L](\omega)+\frac{1}{4 r^2}\ln\frac{2r}{r-t}\ \mathcal{G}[L](\omega)
   +\frac{1}{2r^2} \biggl[-L(\omega)+\widetilde{L_z}(\omega) +\frac{1}{2}\mathcal{G}[L](\omega)\biggr],
 \end{equation}
where
 \begin{equation}
{\mathcal M}_+[L](\omega)= \int_{\langle \sigma,\omega\rangle >0}\!\!\!\!\! \!\! \!  L(\sigma) \,dS(\sigma)=\int_0^1 L(\omega,z)\, dz
\end{equation}
is a function that assigns to each point $\omega$ the integral of $L$ over the upper half-sphere $\langle\sigma,\omega\rangle>0$,
and
\begin{equation}
  \widetilde{L_z}(\omega)=\int_0^1\frac{L_z(\omega,z)-L_z(\omega,0)}{z}\ud z\,.
\end{equation}

The transformation $L\mapsto \mathcal{M}_+[L]$ maps odd to odd functions, and it is an invertible map onto its image. This follows, for example, from the explicit form of the coefficients in \eqref{eq:Psi20:sim}, and \eqref{eq:M0cond}, which imply together
\begin{equation}
 -\triangle_\omega  \mathcal{M}_+[L]=\mathcal{G}[L]\,.
\end{equation}

Finally, it follows directly from \eqref{eq:Psi20:sim} that
     \begin{equation}
  \omega^i \Psi_{2,0}[K_i]\sim \frac{1}{2r}\frac{1}{r-t} \int_0^1 \omega^i K_i(\omega,z) \ud z+\frac{1}{4 r^2}\ln\frac{2r}{r-t}\ \omega^i \mathcal{G}[K_i](\omega)   +\frac{1}{2r^2} \mathcal{U}[K],
 \end{equation}
 where $\mathcal{U}$ is a transformation to be discussed below,
 and we compute
 \begin{equation}
    \int_0^1 \omega^i K_i(\omega,z) \ud z=-2\int_0^1 z K(\omega,z)+\int_0^1(1-z^2)\partial_z K(\omega,z)\ud z\,.
    =-K(\omega,0)=-\mathcal{F}[K](\omega)\,,
\end{equation}

In conclusion, the asymptotics to leading order reads:
\begin{equation}
  \phi_2\sim \frac{1}{2r^2}\frac{r}{r-t}\Bigl(\mathcal{F}[K](\omega)
  +\mathcal{M}_+[L](\omega)\Bigr).
\end{equation}

Thus given an expansion of the form \eqref{eq:homoexpansion2},
we see that the even part of $M_0$ determines the even part of $K$,
and the odd part of $M_0$ determines the odd part of $L$.

It remains to determine the even part of $L$, and the odd part of $K$ from $M_1$.
If $L$ is even, we find that
\begin{equation}
  \label{eq:L:two}
  -L(\omega)+\widetilde{L_z}(\omega)=\omega^i\mathcal{S}[L_i](\omega)\,.
\end{equation}
Moreover, in the case that $K$ is odd, we find after some computation that
\begin{equation}
  \label{eq:K:two}
    -2\widetilde{K}(\omega) +\mathcal{U}[K](\omega) =-2\mathcal{S}[K](\omega)-\omega^i\omega^j\mathcal{S}[K_{ij}](\omega)\,,\qquad \partial_{ij}^2\Bigl(\frac{K(\omega)}{r^2}\Bigr)=\frac{K_{ij}(\omega)}{r^4} \,.
\end{equation}

The transformations $\mathcal{F}$ and $\mathcal{G}$ that we have encountered in Section~\ref{sec:hom:ext:deg:one}  in the study of homogeneous degree $-1$ solutions
are associated in \cite{BEGM03} to transformations of homogeneous functions of degree $-1$, and $-2$.
We expect that the transformations $\mathcal{M}_+$, as well as the transformations given by the right hand sides of \eqref{eq:L:two} and \eqref{eq:K:two}, are geometrically associated to transformations of homogeneous functions of degree $-2$, and $-3$, and we expect,  in view of the results in Section~\ref{sec:hom:time}, that they are invertible (up to finite dimensional kernels).

\section{Matching of the asymptotics for the exterior homogeneous solutions to the radiation fields at null infinity for the homogeneous wave equation}
\label{sec:ext:homogeneous}

In this section we will construct solutions with prescribed radiation fields that match to homogeneous solutions of degree $-1$ and $-2$ in the exterior and the interior of the light cone.

In the following we will treat the homogeneous equation, $\Box\phi=0$,
and show  that given any homogeneous solutions $\Psi_1^{\text{ext}}$, and $\Psi_2^{\text{ext}}$ in the exterior $|x|>t$, of degree $-1$, and $-2$, respectively,
we can solve the scattering problem with prescribed radiation fields, for the homogeneous wave equation with these given exterior asymptotics.
This gives a proof of Theorem~\ref{thm:ext:scattering},
and in the process we will find a compatibility condition that replaces    \eqref{eq:F0:comp} for slowly decaying data.

\subsection{Higher order expansion in the wave zone towards null infinity.}
\label{sec:ext:rad}

Consider  the same expansion in the wave zone that we used in the presence of  a source \eqref{eq:secondorderapproximatesource2}:
\beq
\Psi_{\text{rad}}=\ln{\!\Big|\!\frac{2\,r}{\langle t\shortminus r{}_{\!}\rangle{}_{\!}}\Big|}
\frac{\mathcal{F}_{\!01\!}(r\!-\!t,\omega) \! \! }{r}+\frac{\!\mathcal{F}_{\!0}(r\!-\!t,\omega)\!\!}{r}
+\ln{\!\Big|\!\frac{2\,r}{\langle t\shortminus r{}_{\!}\rangle{}_{\!}}\Big|}
\frac{\mathcal{F}_{\!11\!}(r\!-\!t,\omega) \! \! }{r^2}+\frac{\!\mathcal{F}_{\!1\!}(r\!-\!t,\omega)\!\!}{r^2}.
\eq
For the homogeneous wave equation,  $\mathcal{F}_{01}$, $\mathcal{F}_1$ and $\mathcal{F}_{11}$ satisfy the ODEs \eqref{eq:F01eq}, \eqref{eq:F1eq}, and  \eqref{eq:F11eq}, with $n=0$.

In fact, $\mathcal{F}_{01}$  now satifies $\mathcal{F}_{01}'(q,\omega)=0$,
and it follows that for some function $N(\omega)$ independent of $q$,
\begin{equation}\label{eq:F01:const}
\mathcal{F}_{01}(q,\omega) =N(\omega)\,.
\end{equation}
It then follows from \eqref{eq:F11eq} that
\begin{equation} \label{eq:F11:lin}
\mathcal{F}_{11}(q,\omega)
=\triangle_\omega N(\omega)\, q/2+M(\omega),
\end{equation}
for some function $M(\omega)$ independent of $q$. Hence by \eqref{eq:F1eq}
\begin{equation}\label{eq:F1:M}
  \begin{split} 2\mathcal{F}_1^\prime(q,\omega)&=-\mathcal{F}_{01}(q,\omega)+\Delta_\omega\mathcal{F}_{01}(q,\omega)+\Delta_\omega \mathcal{F}_0(q,\omega)+2 (q/\langle q\rangle^2) \mathcal{F}_{11} \\
    &=-N(\omega)+\big(2-1/\langle q \rangle^2 \big)\Delta_\omega N(\omega)+\Delta_\omega \mathcal{F}_0(q,\omega)+2 (q/\langle q\rangle^2)M(\omega)\,.
  \end{split}
\end{equation}
Integrating this directly would  give a logarithmic term  $\ln{\langle q\rangle}$, which does not correspond to a homogeneous solution.
Instead this term will be included in the radiation field using the matching conditions below.

\subsection{Exterior solutions.}

We have seen in Section~\ref{sec:hom:ext} that the choice of a homogeneous solution  corresponds one to one to the choice of two functions on the sphere, which appear as leading order coefficients in the expansions towards the light cone.

In fact, for any smooth functions $N^{ext}_{01}(\omega)$ and $N^{ext}_0(\omega)$ there exists a homogeneoues degree $-1$ solution $\Psi_1^{\text{ext}}$ in the exterior, so that
\beq\label{eq:Psi:ext:one}
\Psi_1^{\text{ext}}=N^{ext}_{01}{}_{\!}(\omega{}_{\!})\frac{1}{r}\!\ln{\!\Big|\frac{ 2\,r}{t\shortminus r}\Big|}+N^{ext}_{0}(\omega{}_{\!})\frac{1}{r}
+N^{ext}_{{}_{\!}11{}_{\!}}(\omega{}_{\!})\frac{r\shortminus t\!}{\,r^2}\ln{\!\Big|\frac{ 2\, r}{t\shortminus r}\Big|}
+N^{ext}_{{}_{\!}1{}_{\!}}(\omega{}_{\!})\frac{r\shortminus t\!}{\,r^2}
+O\Big(\!\frac{(r\shortminus t)^2\!\!\!}{\,r^3}
\ln{\!\Big|\frac{ 2\, r}{t\shortminus r}\Big|}\Big) ,
\eq
where $N^{ext}_{11}(\omega)$ and $N^{ext}_1(\omega)$ are determined from $N^{ext}_{01}(\omega)$ and $N^{ext}_0(\omega)$:
\beq\label{eq:homocond:bar}
2N^{ext}_{11}(\omega)=\triangle_\omega N^{ext}_{01}(\omega),\qquad
    2N^{ext}_1(\omega)=2\Delta_\omega N^{ext}_{01}(\omega)+\Delta_\omega N^{ext}_0-N^{ext}_{01}(\omega) \,.
  \end{equation}

We have also seen that for any smooth functions $M^{ext}_{0}(\omega)$ and $M^{ext}_1(\omega)$ there exists a homogeneous degree $-2$ solution $\Psi_2^{\text{ext}}$ in the exterior, so that
\beq\label{eq:Psi:ext:two}
\Psi_2^{\text{ext}}
= \frac{\!M^{ext}_{0}(\omega{}_{\!})\!}{r^2}\frac{r}{\!r\shortminus t\!}
+\frac{\!M^{ext}_{11}{}_{\!}(\omega{}_{\!})\!}{r^2}\ln{\!\Big|{}_{\!}\frac{ 2\, r}{t\shortminus r}{}_{\!}\Big|}
+ \frac{\!M^{ext}_{1}{}_{\!}(\omega{}_{\!})\!}{r^2}
+\mathcal{O}\Bigl(\frac{r\shortminus t\!}{\,r^3}
\ln{\!\Big|{}_{\!}\frac{ 2\, r}{t\shortminus r}{}_{\!}\Big|}\Bigr)\,,
\eq
where by \eqref{eq:M0cond},
\begin{equation}\label{eq:M0cond:bar}
M^{ext}_{11}(\omega)= -\frac{1}{2}\triangle_\omega M^{ext}_{0}{}_{\!}(\omega{}_{\!})\,.
\end{equation}

\subsection{Matching to the exterior homogeneous solution and resulting interior solution.}
\label{sec:ext:match}

We will now turn to the conditions to match $\phi_{rad}$ to $\Psi_1+\Psi_2$ in the exterior, then infer the presence of interior homogeneous solutions, and write down the matching conditions to the interior as before.

\subsubsection{Matching to leading order.}
The first matching condition is
\begin{equation}
  \lim_{q\to\infty}\mathcal{F}_{01}(q,\omega)=N^{ext}_{01}(\omega)\,,
\end{equation}
which shows that  in \eqref{eq:F01:const},
\begin{equation}
  \label{eq:N:ext}
  N(\omega)=N^{ext}_{01}(\omega)\,,\quad \text{ and hence also }\lim_{q\to-\infty}\mathcal{F}_{01}(q,\omega)=N(\omega)=N^{ext}_{01}(\omega)\,.
\end{equation}
If $N(\omega)$ does not vanish identically there is thus necessarily an \emph{interior} solution $\Psi_1^{\text{int}}[N]$, determined by $N(\omega)$, see Section~\ref{sec:hom:asymptotics:leading}.
In fact, by \eqref{eq:homoexpansion}:
\beq
\Psi_{1}^{\text{int}}[N]=N_{01}^{int}(\omega{}_{\!})\frac{1}{r}\!\ln{\!\Big|\frac{ 2\,r}{t\shortminus r}\Big|}+N_{0}^{int}(\omega{}_{\!})\frac{1}{r}
+N_{{}_{\!}11{}_{\!}}^{int}(\omega{}_{\!})\frac{r\shortminus t\!}{\,r^2}\ln{\!\Big|\frac{ 2\, r}{t\shortminus r}\Big|}
+N_{{}_{\!}1{}_{\!}}^{int}(\omega{}_{\!})\frac{r\shortminus t\!}{\,r^2}
+O\Big(\!\frac{(r\shortminus t)^2\!\!\!}{\,r^3}
\ln{\!\Big|\frac{ 2\, r}{t\shortminus r}\Big|}\Big) ,
\eq
where by \eqref{eq:N:01:0}:
\begin{equation} \label{eq:N0:int}
  N_{01}^{int}(\omega)=N(\omega)=N^{ext}_{01}(\omega)\,,\qquad N_0^{int}(\omega)=2\frac{1}{4\pi}\int_{\mathbb{S}^2} \frac{N(\sigma)-N(\omega)}{1-\langle \sigma,\omega\rangle} \ud S(\sigma)\,.
\end{equation}
As opposed to the exterior, where $N^{ext}_0(\omega)$ is free, in the interior $N_{0}^{int}(\omega)$ is determined from $N(\omega)$.
Hence while $N_{01}^{int}=N^{ext}_{01}$, the functions $N_0^{int}$ and $N^{ext}_0$ are in general different.
Moreover  $N_{{}_{\!}11{}_{\!}}^{int}(\omega{}_{\!})$ and $N_{{}_{\!}1{}_{\!}}^{int}(\omega{}_{\!})$ are computed from  $N_{01}^{int}(\omega)$ and $N_0^{int}(\omega)$ as in \eqref{eq:homocond},  and thus $2N_{11}^{int}=\triangle_\omega N=\triangle_\omega N^{ext}_{01}=2N^{ext}_{11}$, but the functions $N_1^{int}$ and $N^{ext}_1$ may be different.

The  matching condition for the radiation field in the exterior is
\begin{equation}
  \label{eq:F0:ext}
  \lim_{q\to\infty}\mathcal{F}_0(q,\omega)=N^{ext}_0(\omega)\,,
\end{equation}
and given the presence of the interior solution, we must also have
\begin{equation}
  \label{eq:F0:int}
  \lim_{q\to -\infty}\mathcal{F}_0(q,\omega)=N_0^{int}(\omega)\,.
\end{equation}

\subsubsection{Matching to second order.}

From \eqref{eq:F11:lin}, we now have by \eqref{eq:homocond:bar},
\begin{equation}
  \label{eq:F11:lin:match}
  \mathcal{F}_{11}(q,\omega)=N^{ext}_{11}q +M(\omega)\,.
\end{equation}
and that matches \eqref{eq:Psi:ext:one} and \eqref{eq:Psi:ext:two} if also
\begin{equation}
  \label{eq:M:ext}
  M(\omega)=M^{ext}_{11}(\omega)\,.
\end{equation}
The interior solution $\Psi_1^{int}[N]$ is already determined and the linear term in \eqref{eq:F11:lin:match} matches because $N^{ext}_{11}=N_{11}^{int}$.
Moreover the constant term in \eqref{eq:F11:lin:match} matches $\Psi_2^{int}$ in the interior provided $M_{11}^{int}=M^{ext}_{11}$.
The homogeneous  degree $-2$ solution in the interior, $\Psi_2^{int}$, is completely determined by the choice of a source function $M_0^{int}(\omega)$, see Section~\ref{sec:hom:asymptotics:second}.
Since by \eqref{eq:M01:given}, $M_{11}^{int}=-\frac{1}{2}\triangle_\omega M_0^{int}$, and by \eqref{eq:M0cond:bar}, $M_{11}^{ext}(\omega)= -\triangle_\omega M_{0}^{ext}(\omega)/2$,
we we see that the matching condition $M_{11}^{int}=M^{ext}_{11}$ implies  that $M_0^{int}$ agrees with $M^{ext}_0$ \emph{up to a constant}. So we choose
\begin{equation}
  \label{eq:M0:C}
  M_0^{int}(\omega)=M^{ext}_0(\omega)+C_0,
\end{equation}
with a constant $C_0$ to be determined.
Finally we see from \eqref{eq:M01:given}, that as a consequence of \eqref{eq:M0:C},
\begin{equation}
  M_1^{int}(\omega)=M_1^0(\omega)+\frac{1}{4}C_0,
\end{equation}
where $M_1^0(\omega)$ is the function $M_1(\omega)$ computed from \eqref{eq:M01:given} for the choice of the source function $M_0^{ext}$.

We now return to \eqref{eq:F1:M}, which in view of \eqref{eq:N:ext}, \eqref{eq:M:ext} and \eqref{eq:M0cond:bar} we can rewrite as:
\begin{equation} \label{eq:F1:H}
  \begin{split} 2\mathcal{F}_1^\prime(q,\omega)=&-N^{ext}_{01}(\omega)+\big(2-1/\langle q \rangle^2 \big)\Delta_\omega N^{ext}_{01}(\omega)+\Delta_\omega \mathcal{F}_0(q,\omega)+2 (q/\langle q\rangle^2)M^{ext}_{11}(\omega)\\
    =&-N^{ext}_{01}(\omega)+2\Delta_\omega N^{ext}_{01}(\omega)+\Delta_\omega\bigtwo( \mathcal{F}_0(q,\omega)-(q/\langle q\rangle^2)M^{ext}_0(\omega)\bigtwo) -\Delta_\omega N^{ext}_{01}(\omega)/\langle q \rangle^2\\
    =& 2N^{ext}_1(\omega)\chi_{q>0}+2N_1^{int}(\omega)\chi_{q<0}\\&\quad +\triangle_\omega\bigtwo(\mathcal{F}_0(q,\omega)-N^{ext}_{0}(\omega)
    \chi_{q>0}-N_0^{int}(\omega)\chi_{q<0}-(q/\langle q\rangle^2)M^{ext}_0(\omega)\bigtwo)-\triangle_\omega N^{ext}_{01}/\langle q\rangle^2 ,
  \end{split}
\end{equation}
where $\chi_{q>0}+\chi_{q<0}=1$, and where we have used that by \eqref{eq:homocond:bar},
\begin{equation}
  -N^{ext}_{01}(\omega)+2\Delta_\omega N^{ext}_{01}(\omega)=2N^{ext}_1(\omega)-\Delta_\omega N^{ext}_0=2N_1^{int}(\omega)-\Delta_\omega N_0^{int}
\end{equation}
holds in the exterior and in the interior, with the difference that in the exterior $N^{ext}_0(\omega)$ is freely chosen, and in the interior $N_0^{int}(\omega)$ is determined from $N(\omega)$ by \eqref{eq:N0:int}, and this equation determines $N_1^{int}(\omega)$ and $N^{ext}_1(\omega)$ accordingly.
As remarked above, with \eqref{eq:M0:C} we can also replace
\begin{equation}
\triangle_\omega\bigtwo(  -(q/\langle q\rangle^2)M^{ext}_0(\omega)\bigtwo)=\triangle_\omega\bigtwo(  -(q/\langle q\rangle^2)M^{ext}_0(\omega)\chi_{q>0}-(q/\langle q\rangle^2)M_0(\omega)\chi_{q<0}\bigtwo)\,.
\end{equation}

Therefore $\mathcal{F}_1$ has the correct linear behaviour, namely the linear term matches exterior \emph{and} interior asymptotics, provided
\begin{equation}
  \label{eq:H0:F0}
  \mathcal{H}_0(q,\omega)=\mathcal{F}_0(q,\omega)-N_{0}^{ext}(\omega)\chi_{q>0}-N_0^{int}(\omega)\chi_{q<0}-(q/\langle q\rangle^2)M^{ext}_0(\omega)\chi_{q>0}-(q/\langle q\rangle^2)M_0^{int}(\omega)\chi_{q<0}
\end{equation}
decays sufficiently in $q$. In fact, let us assume that
\begin{equation}
 \label{eq:H0:integrable}
  |(\langle q\rangle \partial_q)^k \partial_\omega^\alpha \mathcal{H}_0(q,\omega)|\les \langle q\rangle^{-\kappa}\,,\qquad \kappa>1\,.
\end{equation}

Then in particular the matching conditions \eqref{eq:F0:ext} and \eqref{eq:F0:int} are satisfied,
along with the the matching conditions to second order
\begin{equation}
  \lim_{q\to\infty}q\bigl(\mathcal{F}_{0}(q,\omega)-N^{ext}_0(\omega)\bigr)=M^{ext}_0(\omega)\qquad
  \lim_{q\to-\infty}q\bigl(\mathcal{F}_{0}(q,\omega)-N_0^{int}(\omega)\bigr)=M_0^{int}(\omega)\,.
\end{equation}

Therefore, we can integrate \eqref{eq:F1:H} and verify the matching condition in the exterior with
\begin{equation}
  2\mathcal{F}_1(q,\omega)=2N^{ext}_1(\omega)\chi_{q>0} \, q+2 N_1^{int}(\omega) \chi_{q<0} \, q + 2M^{ext}_1(\omega) -\int_q^\infty\!\!\!\! \triangle_\omega \mathcal{H}_0(s,\omega)-\triangle_\omega N^{ext}_{01}(\omega)/\langle s\rangle^2\ud s\,,
\end{equation}
which then also satisfies the matching condition in the interior provided
\begin{equation}
  2M^{ext}_1(\omega) -\int_{-\infty}^\infty\!\!\!\! \triangle_\omega \mathcal{H}_0(s,\omega)-\triangle_\omega N^{ext}_{01}(\omega)/\langle s\rangle^2\ud s=2M_1^{int}(\omega)\,.
\end{equation}
We can view this equation as a necessary condition on the integral of $\mathcal{H}_0$:
\begin{equation}
  \label{eq:H0:compatibility:M}
\triangle_\omega \Bigl[ \int_{-\infty}^{\infty}\mathcal{H}_0(s,\omega)\ud s \Bigr]=2M^{ext}_1(\omega)-2M_1^{int}(\omega)+\pi \triangle_\omega N^{ext}_{01}(\omega) .
\end{equation}
This equation has an integrability condition:
\begin{equation}
  \overline{M_1^{ext}}=\overline{M_1^{int}}=\overline{M_1^0}+\frac{1}{4}C_0\,.
\end{equation}
Since $M_1^0$ is determined from $M^{ext}_0$,
we can choose for any functions $M_0^{ext}(\omega)$ and $M^{ext}_1(\omega)$ the constant $C_0$ so that this integrability condition holds.

In conclusion, for any functions $N^{ext}_{01}$, $M^{ext}_0$, and $M^{ext}_1$,
first solve
\begin{equation}
  \triangle_\omega \mathcal{P}(\omega)=2M^{ext}_1(\omega)-2M_1^{int}(\omega)+\pi \triangle_\omega N^{ext}_{01}(\omega)\,,
\end{equation}
(which by appropriate choice of $C_0$ is always possible) and then we are free to choose $\mathcal{H}_0(q,\omega)$ up to the condition
\begin{equation}
  \int_{-\infty}^{\infty} \mathcal{H}_0(q,\omega) \ud q=\mathcal{P}(\omega)\,.
\end{equation}

\subsubsection{Matching without a compatibility condition.}

In the previous section we have viewed the exterior homogeneous solutions as given.
Then \eqref{eq:H0:compatibility:M} is a compatibility condition for the prescription of the radiation field.
Alternatively, if $\mathcal{F}_0$ is prescribed as in \eqref{eq:H0:F0}, with $\mathcal{H}_0$ any smooth function satisfying \eqref{eq:H0:integrable}, then \eqref{eq:H0:compatibility:M} can be satisfied with an appropriate choice of $M_1^{ext}$.

This is the setting of Theorem~\ref{thm:ext:scattering}. Note that the choice of the functions $N_{01}(\omega)$ and $N_0^{ext}$ determine the homogeneous degree $-1$ solution in the exterior, but $M_0^{ext}(\omega)$ alone does not determine the homogeneous degree $-2$ solution in the exterior. Here $\mathcal{H}_0(q,\omega)$ is chosen freely, and not subject to a compatibility condition. In view of Corollary~\ref{thm:hom:M}, a second function $M_1^{ext}(\omega)$ is needed to determine the homogeneous degree $-2$ solution in the exterior, which is then fixed by the relation \eqref{eq:H0:compatibility:M}.

More precisely, for any given $N_{01}$, $N_0^{ext}$, and $M_0^{ext}$, and any smooth function $\mathcal{H}_0$ satisfying \eqref{eq:H0:integrable}, we can define $\mathcal{F}_0$  by \eqref{eq:H0:F0}, where $N_0^{int}$ is determined by $N_{01}$, and $M_0^{int}$ is given by \eqref{eq:M0:C} for any given constant $C_0$.
Then the left hand side of \eqref{eq:H0:compatibility:M} is given, and we \emph{define} the function $M_1^{ext}$ by \eqref{eq:H0:compatibility:M}:
\begin{equation}
M^{ext}_1(\omega)=\triangle_\omega \Bigl[ \frac{1}{2}\int_{-\infty}^{\infty}\mathcal{H}_0(s,\omega)\ud s \Bigr]+M_1^{int}(\omega)-\frac{\pi}{2} \triangle_\omega N^{ext}_{01}(\omega) \,.
\end{equation}
In this way the prescribed radiation field determines the second order asymptotics in the exterior.

\subsection{Asymptotic solution.}

In a manner similar to  Section~\ref{sec:approximate}, we can now define and estimate the asymptotic solution for the proof of Theorem~\ref{thm:ext:scattering}.
With our choices of $\Psi_{rad}$ in Section~\ref{sec:ext:rad} matched to the homogeneous solutions in the exterior and interior as in Section~\ref{sec:ext:match}, we take
\begin{equation}
  \Psi_{asym}=\chi_a \Psi_{rad}+\bigl(1-\chi_a^{-}\bigr)\Psi_{hom}^{int}+\bigl(1-\chi_a^+\bigr)\Psi_{hom}^{ext},
\end{equation}
where with a smooth cutuff function $\chi$, so that $\chi(s)=1$ for $s< 1/4$ and $\chi(s)=0$ for $s>3/4$,
\begin{equation}
  \chi_a=\chi(\tminusr/r^a),\qquad\chi_a^+=\chi(\langle(r-t)_+\rangle/r^a),\qquad\chi_a^-=\chi(\langle (r-t)_-\rangle /r^a)\,  .
\end{equation}

In view of \eqref{eq:Rrad} we have that
\begin{equation}
  \Box\Psi_{rad}=R_{rad}\,,\qquad |R_{rad}|\les \ln\Bigl|\frac{2r}{\tminusr}\Bigr\rvert \frac{\tminusr}{r^4} ,
\end{equation}
and away from the lightcone,
\begin{gather}
  \Box \Psi_{hom}^{ext}=0 \,, \ r-t>0\,,\qquad  \Psi_{hom}^{ext}=\Psi_1^{ext}+\Psi_2^{ext}, \\
    \Box \Psi_{hom}^{int}=0 \,, \ r-t<0\,, \qquad \Psi_{hom}^{int}=\Psi_1^{int}+\Psi_2^{int}\,.
\end{gather}
Therefore
\begin{equation}
  \Box \Psi_{asym}=R_{asym}=\chi_aR_{rad}+(\Box \chi_a^+)\Psi_{diff}^++2Q(\partial\chi_a^+,\partial\Psi_{diff}^+)
  +(\Box \chi_a^-)\Psi_{diff}^-+2Q(\partial\chi_a^-,\partial\Psi_{diff}^-)\,,
\end{equation}
where
\begin{equation}
  \Psi_{diff}^+=\Psi_{rad}-\Psi_{hom}^{ext}\,,\qquad \Psi_{diff}^-=\Psi_{rad}-\Psi_{hom}^{int}\,.
\end{equation}
For convenience we can bring the remainder $R_{asym}$ in exactly the form \eqref{eq:R:asym:int}, with $\Psi_{diff}$ replaced by
\begin{equation}
  \label{eq:102}
  \Psi_{diff}=\Psi_{rad}-\chi_{q>0}\Psi_{hom}^{ext}-\chi_{q<0}\Psi_{hom}^{int}\,.
\end{equation}
In view of the results in Section~\ref{sec:ext:match},
the radiation fields in $\Psi_{diff}$ then decay towards the exterior and interior,
and we obtain in analogy to \eqref{eq:secondorderapproximatesource2H} but without any logarithmic terms that cancel,
\beq 
\Psi_{diff}=\frac{\!\mathcal{H}_{\!0\!}(r\shortminus t,\omega)\!\!}{r}
+\frac{\!\mathcal{H}_{\!1\!}(r\shortminus t,\omega{}_{\!})\!\!}{r^2}
+O\Big(\!\frac{(r\shortminus t)^2\!\!\!}{\,r^3}
\ln{\!\Big|\frac{ 2\, r}{t\shortminus r}\Big|}\Big)\,,
\eq
where
\begin{equation}
  |\mathcal{H}_0(q,\omega)| +\langle q\rangle^{-1} |\mathcal{H}_1(q,\omega)|\les \langle q\rangle^{-\kappa}\,.
\end{equation}
Hence we obtain as before
\begin{equation}
  \label{eq:105}
  \big|\Box \chi_a \big||\Psi_{diff}| + \big| Q(\pa\chi_a,\pa\Psi_{dif{}_{\!}f})\big|\les  r^{-2-a(1+\kappa)}\,.
\end{equation}

In conclusion, as above in \eqref{eq:R:asym:int},
\begin{equation}
  \Box\Psi_{asym}=R_{asym}=\chi_a R_{rad}
 +(\Box \chi_a) \Psi_{diff}+2 Q(\pa\chi_a,\pa\Psi_{diff})\,,
\end{equation}
where for $\kappa=2$, and $a=1/2$,
\begin{equation}
  \label{eq:RHomAsym}
  |R_{asym}|\les \frac{\ln{r}}{r^4}  |q|\chi\bigl(\tfrac{\tminusr}{2r^{1/2}}\bigr)\,.
\end{equation}
\begin{remark} More generally, for $1<\kappa\leq 2$, we can choose $a$ so that $a(1+\kappa)=2$.
\end{remark}

\subsection{Estimate of the remainder.}

For the proof of Theorem~\ref{thm:ext:scattering},
it remains to show that $R_{asym}$ produces a sufficiently fast decaying remainder.
Let $E_-=\delta(t^2-|x|^2)H(-t)/2\pi$ be the backward fundamental solution of $-\Box$. Then in view of \eqref{eq:RHomAsym} the convolution
\beq
\psi_{rem}=E_-*R_{asym}
\eq
is well defined,  and   we show in Appendix~\ref{sec:RadialEstimates}, see Lemma \ref{lem:homoremainderest},  that
\begin{equation}
  |\psi_{rem}|\lesssim \frac{\ln{\tplusr}}{\tplusr^2} .
\end{equation}
In particular, the radiation field of  $\psi_{rem}$  vanishes.

\appendix

\section{Appendix for the interior problem}
\label{app:int}

\subsection{Formulas for the forward fundamental solution in the interior with homogeneous data on light cones \cite{L90a,L17}.}\label{sec:sourseformulas}

Let $E$ be the forward fundamental solution of $-\Box$ and let $\mu=w_0(y)\delta(s-\vert y\vert)$. Then
 $$
E*{\mu}(t,x)=\frac{1}{2\pi} \int\!\!\int\delta\big((t-s)^2-\vert
x-y\vert^2\big)w_0(y)\delta(s-\vert y\vert)\,dy\,ds
=\frac{1}{2\pi}  \int\delta\big((t-|y|)^2-\vert
x-y\vert^2\big)w_0(y)\,dy .
$$
Introducing polar coordinates $x=r\omega$, $y=\rho \sigma$,
where $\omega,\sigma\in \Bbb S^2$ we write
$$
(t-|y|)^2-\vert x-y\vert^2=t^2-r^2 -2\big(t -r \langle \omega,\sigma\rangle\big)\rho
\equiv f(\rho,\sigma).
$$
Let $\rho_0=\rho_0(\sigma)$ be such that $f(\rho_0)=0$.
For $t>r$ we get
 $$
E*{\mu}(t,x)=\frac{1}{2\pi} \int_{\Bbb S^2} \int_0^\infty \delta\big(f(\rho,\sigma)\big) \, w_0(\rho)\,\rho^2 d\rho \,dS(\sigma)
=\frac{1}{2\pi} \int_{\Bbb S^2}  \frac{\rho_0^2}{|\pa_\rho f(\rho_0,\sigma)|}w_0(\rho_0)\, \,dS(\sigma).
$$
Hence
$$
E*{\mu}(t,x)
=\frac{1}{4\pi} \int_{\Bbb S^2}  \frac{\rho^2 w_0(\rho\sigma)}{t -r \langle \omega,\sigma\rangle}\, \,dS(\sigma),\qquad\text{where}\quad \rho=\frac{1}{2}\, \frac{t^2-r^2}{t -r \langle \omega,\sigma\rangle}\,.
$$
Any function $F$ can be decomposed as measures on light cones
\beq
F(t,x)=\int \mu_q(t,x)\, dq,\quad\text{where} \quad \mu_q(t,x)=\delta(r-t-q) w_q(x),\quad w_q(x)=F(r-q,x)\,,
\eq
and
\beq E*F(t,x)=\int E*\mu_q (t,x)\, \ud q\,.\eq

For $k=2,3$ let
${\Phi_{\chi}}^{\!\!\!k}[m]$ be the solution of
\beq\label{eq:thekeq}
-\Box {\Phi_{\chi}}^{\!\!\!k}[m](t,r\omega)=m(r-t,\omega)r^{-k}\chi\big(\tfrac{\langle\,r-t\,\rangle}{r}\big)^2,
\eq
with vanishing data at $-\infty$, where $\chi$ is a smooth cutoff function such that $\chi(s)=0$, when $s\geq 1/2$ and
$\chi(s)=1$ when $s\leq 1/4$.
From  \cite{L90a,L17} we recall the formula for the solution
 \begin{equation}\label{eq:k2sol}
{\Phi_{\chi}}^{\!\!\!2}[m](t,r\omega)=\int_{r-t}^{\infty} \frac{1}{4\pi}\int_{\bold{S}^2}{\frac{
 m({q},{\sigma})\chi\big(\tfrac{\langle\,{q}\,\rangle}{\rho}\big)  dS({\sigma})d {q}}{t-r+{q}+r\big(1-\langle\, \omega,{\sigma}\rangle\big)}\,
}\,,
\end{equation}
and
\beq\label{eq:k3solformula}
{\Phi_{\chi}}^{\!\!\!3}[m](t,r\omega)=\int_{r-t}^{\infty} \frac{1}{2\pi}\int_{\bold{S}^2}{\frac{
 m({q},{\sigma})\chi\big(\tfrac{\langle\,{q}\,\rangle}{\rho}\big) dS({\sigma})}{(t-r+{q})(t+r+q)}\,
}\, d {q},
\eq
where
\begin{equation}\label{eq:chi:rho}
\rho=\frac{1}{2}\, \frac{(t+r+q)(t-r+q)}{t-r+{q}+r\big(1-\langle\, \omega,{\sigma}\rangle\big)}\,.
\end{equation}

For $k=2$  no cutoff function is needed. However for $k=3$ the right hand side of \eqref{eq:thekeq} is not in $L^1_{loc}$ and not even well defined as a distribution without cutting off a neighborhood of $r=0$.
The cutoff away from the light cone is of no importance since we will apply this to $m(r-t,\omega)$ constructed from radiation fields that will only be good approximations  close to the light cone $t\sim r$.
However, the asymptotic behaviour of the solution close to the light cone is dependent on the exact choice of cutoff $\chi$ and we would like to remove this dependence. Even though the right hand side of \eqref{eq:thekeq}
for $k=3$ does not make sense as a distribution when $r\sim 0$ if the cutoff function $\chi$ is removed, and the solution formula \eqref{eq:k3solformula} may still make sense if the support of the fundamental solution from the point $(t,x)$ does not contain the set where the right hand side is undefined. This in particular is the case if $m(q,\omega)=0$ for  $q\leq 0$. In the application we only know that $m$ decays in $q$.

\subsection{Taylor series of the spherical integrals for the interior homogeneous solution.} 
\label{sec:SphericalTaylor}

Recall from Section~\ref{sec:hom:asymptotics:leading}, that we can rewrite  the  homogeneous solution $\Psi_1$ in the interior given in \eqref{eq:wavesourceconeformulasec2.3} as
  \begin{equation}
    \label{eq:19}
    \Psi_1[N](t,r\omega)=\frac{1}{\!4\pi r}\!\int_{\mathbb{S}^2}\!\!
  \frac{N(\sigma{}_{\!})\, dS(\sigma) }{t/r-\!\langle\sigma,\omega{}_{\!}\rangle}=\frac{1}{2r}\int_{-1}^1\frac{N(\omega,z) \ud z}{t/r-z}
\end{equation}
where $N(\omega,z)$ is the average of $N(\sigma)$ over the circle $\langle \omega,z\rangle=z$. Similarly, for $\Psi_2$ as given in \eqref{eq:Psi:2}.
We first study the regularity of these averages, and then go on to prove the existence of appropriate expansions of $r\Psi_1$, and $r^2\Psi_2$, in $y=t/r-1$.

\subsubsection{Averaging over circles}
Let $N(\sigma)$ be a $C^{2k}$ function on $\mathbb{S}^2$.
Let
\begin{equation}
\label{eq:avg}
N(\omega,z)\!=\!\int_{\langle \sigma,\omega\rangle =z}\!\!\!\!\! \!\! \!  N(\sigma) \,ds(\sigma) \Big/\!\!\int_{\langle \sigma,\omega\rangle =z} \!\!\!\! \!\! \!  ds(\sigma) ,
\end{equation}
 be the average of $N(\sigma)$ over the circle $\langle{}_{\,} \omega,\sigma{}_{\!}\rangle\!=\!z$.
We claim that $N(\omega,z)$ is a $C^k$ function on $\mathbb{S}^2\times[-1,1]$.
To show this we start by choosing coordinates $(\sigma_1,\sigma_2,\sigma_3)$   so that $\omega=(1,0,0)$.
We can use $(\sigma_2,\sigma_3)$ as local coordinates on $\mathbb{S}^2\!$ close to $(1,0,0)$ and write $N\!\!=\!N_{\!}(\sigma_2,\sigma_3)$, as a function of these. By Taylor's formula (in one dimension applied to the function $N(t\sigma_2,t\sigma_3)$) with integral remainder:
\begin{equation} N(\sigma_2,\sigma_3)
=\sum_{j=0}^{2\ell-1}\frac{1}{j!}\big((\sigma_2\partial_2+\sigma_3\partial_3)^j
N\big)(0,0)+\int_0^1 \frac{(1-t)^{2\ell}}{(2\ell)!}\big((\sigma_2\partial_2+\sigma_3\partial_3)^{2\ell}
N\big)(t\sigma_2,t\sigma_3)\, \ud t,
\end{equation}
for $\ell\leq k$.
Introducing polar coordinates $(\sigma_2,\sigma_3)=(r\cos\theta,r\sin\theta)$,  where $r=\sqrt{1-z^2}$, we see that the integral of odd powers of $\sigma_2$ or $\sigma_3$ vanish, because for $m+n$ odd, $\int_{0}^{2\pi}\cos^m(\theta)\sin^n(\theta)\ud \theta=0$.
Therefore we get
\beq
\frac{1}{2\pi} \int_0^{2\pi}  N(r\cos\theta,r\sin\theta)\, d\theta\!
=\sum_{j=0}^\ell r^{2j} N_{2j}^{2\ell},
\eq
where $N_{2j}^{2\ell}$, for $j<\ell$ are independent of $r$ and $N_{2\ell}^{2\ell}$ is a continuous function of $r$. Moreover, because the integral is independent of rotations of $(\sigma_2,\sigma_3)$ it follows that $N_{2j}^{2\ell}$, for $j\leq \ell\leq k$ are $C^{2k-2j}$ functions of $\omega\in \mathbb{S}^2$.
This shows that $N(\omega,z)$ has a Taylor series in $1-z$ up to order $l\leq k$, because $r^2=(1-z)(1+z)$, and $N^{2\ell}_{2j}(\omega)$ are $C^{2(k-j)}$ functions of $\omega$, $j\leq \ell$.
It follows from this $N(\omega,z)$ is a $C^k$ function on $\mathbb{S}^2\times[-1,1]$.

\subsubsection{Interior Expansion of the spherical integrals  in the quadratic case}

The expansion \eqref{eq:homoexpansion} follows from expanding
$ N(\omega,z)$ in a Taylor series and using polynomial division and partial fractions to evaluate the integrals.
To see that such an expansion exists, with $y\!=\!(t-r\!)/r $, we write
\beq
r\Psi_1[N](t,r\omega)= \int_{-1}^1 N(\omega,z)(1+y -z)^{-1} dz.
\eq
We expand the $C^k$  function $N(\omega,z)$ in a Taylor series in $1-z$,
  \begin{equation}
    \label{eq:20}
    N(\omega,z)=\sum_{j=0}^{k-1}N_j(\omega)(1-z)^j+N_k(\omega,z)(1-z)^k\,,
  \end{equation}
  where $N_k(\omega,z)$ is a continuous function in $z$, and further expand $(1-z)^j=(1-z+y-y)^j$ to get
  \begin{equation}\label{eq:N:Taylor}
N(\omega,z)=\sum_{j=0}^{k-1} N_j(\omega)\,(-y)^j+ N_k(\omega,z)\,(-y)^k +(1-z+y) \sum_{j=0}^{k} F_j(\omega,y,z) ,
\end{equation}
where $ F_j(\omega,y,z)$ are polynomials in $y$ with continuous coefficients in $z$.

Substituting this into the integral we get for some $N_{1j}$ and $N_{0j}$,
\beq
\int_{-1}^1 N(\omega,z)(1+y -z)^{-1} dz=\sum_{j=0}^{k} N_{1j}(\omega)\, y^j  \ln\Big(\frac{1}{y}\Big)+\sum_{j=0}^{k-1} N_{0j}(\omega) y^j+N_{0k}(\omega,y)y^k\,.
\eq
Here we used the mean value theorem for integrals to conclude that $N_{1k}(\omega)$  is a continuous function of $\omega$.

\subsubsection{ Lower order Interior Expansion of the spherical integrals}

We can proceed similarly for
\beq
r^2\Psi_2[M](t,r\omega)= \int_{-1}^1 \! \!\! \! M(\omega,z)(1+y -z)^{-2} dz,
\quad\text{where}\quad
M(z,\omega)\!=\!\int_{\langle \sigma,\omega\rangle =z}\!\! \!\! \!\!\! \! \!\! M(\sigma) \,ds(\sigma) \Big/\!\!\int_{\langle \sigma,\omega\rangle =z} \!\!\! \!\! \!\!\!\! \!\! \!  ds(\sigma).
\eq
Here $M$ is assumed to be a $C^{2k}$ function, so  we can expand the spherical means $M(z,\omega)$  in a Taylor series in $1-z$ as in \eqref{eq:N:Taylor}. Substituting above gives for some $M_j(\omega)$,
\begin{equation}
  \int_{-1}^1 \! \!\! \! M(\omega,z)(1+y -z)^{-2} \ud z=\sum_{j=0}^{k}M_{1j}(\omega)\frac{y^{j-1}}{2+y}+\sum_{j=0}^k\int_{-1}^1 \! \!\! \! G_j(\omega,y,z)(1+y -z)^{-1} \ud z
\end{equation}
where $G_j(\omega,y,z)$ are polynomials in $y$ of order $j$ with continuous coefficients in $z$. Hence by the mean value theorem for integrals there are $M_{0j}(\omega,y)$ so that
\begin{equation}
  \sum_{j=0}^k\int_{-1}^1 \! \!\! \! G_j(\omega,y,z)(1+y -z)^{-1} \ud z=\sum_{j=0}^k M_{0j}(\omega)y^j\ln\Big(\frac{2+y}{y}\Big)\,.
\end{equation}

\subsubsection{Non-singular spherical integrals}
\label{app:nonsingular}
From the above discussion it is also clear that an integral  such as
 \begin{equation}
   N_0(\omega) = \frac{1}{4\pi}\int_{\mathbb{S}^2}\frac{N(\sigma)-N(\omega)}{1-\langle \sigma,\omega\rangle} \ud S(\sigma) ,
 \end{equation}
 which we have  encountered in Section~\ref{sec:hom:asymptotics:leading}, is finite, provided $N(\sigma)$ is sufficiently regular.
 Indeed if $N(\sigma)$ is $C^2$, then using $z=\langle \sigma,\omega\rangle$ as a coordinate, and  a Taylor expansion in $1-z$ of the average of $N(z,\omega)$ over the circle $\langle\sigma,\omega\rangle =z$, we have that
 \begin{equation}
   N_0(\omega)=\frac{1}{2}\int_{-1}^{1}\frac{N(z,\omega)-N(\omega)}{1-z}\ud z=\frac{1}{2}\int_{-1}^{1}N_1(z,\omega)\ud z
 \end{equation}
 is a continuous function on $\mathbb{S}^2$.

\subsection{The radial estimates.} 
\label{sec:RadialEstimates}

In this section we want to estimate the solution $\psi_{rem}$ to $-\Box\psi=R_{asym}$, with vanishing data at infinity, where for the problem with a source by \eqref{eq:Rasymest},
\beq\label{eq:Rasymest:B}
\big|R_{asym}\big|
\les  \frac{\ln r }{r^{4}}\,\bigtwo(|q|\, \chi_{0<-q< r^a}
+\langle q\rangle^{-2\gamma}\,\chi_{q\geq 0}\bigtwo)+\frac{1}{t^2\langle q\rangle^{2+2\gamma}}\chi_{ -r/2<q<-r^a}\,,
\eq
for $\gamma\geq 1/2$, and $a=1/2$, and for the homogeneous problem by \eqref{eq:RHomAsym}
\begin{equation}\label{eq:RHomAsymApp}
  |R_{asym}|\les \frac{\ln{r}}{r^4} |q| \chi\bigl(\tfrac{\tminusr}{2r^{1/2}}\bigr)\,.
\end{equation}

The solution is given by the convolution with the fundamental solution $E_-$, $\psi_{rem}=E_-\ast R_{asym}$, and using the positivity of the fundamental solution $E_-$, this can be estimated using radial estimates.
 We are here interested in the effect of the asymptotics of this source term on the solution $\psi_{rem}$, and the terms dealt with in this section, which have a modified cutoff compared to $R_{asym}$, are equivalent in this respect.

More generally, consider $-\Box \phi=F$,
and define $\overline{F}$ by taking the supremum over the angular variables, $\overline{F}(t,r)\!=\!\sup_{\omega\in S^2} |F(t,r\omega)|$
and let $F_0\!=\!F H$ where $H\!=\!1$,
when $t\!>\!0$ and $H\!=\!0$, when $t\!<\!0$.
 Since $|F_0|\leq \overline{F}_0$ it follows from
 the positivity of the fundamental solution $E_-$, that $|\phi|\leq
 |\overline{\phi}|$ where $\overline{\phi}$ is the solution of
 $-\Box\overline{\phi}=\overline{F}_0$ with vanishing  data at infinity.
 Since the wave operator is invariant under rotations
 it follows that $\overline{\phi}$ is independent of the angular
 variables so
 $
 (\pa_t-\pa_r)(\pa_t+\pa_r)(r\overline{\phi}(t,r))=r\overline{F}_0.
 $
 If we now introduce new variables $\xi=t+r$ and $\eta=t-r$ and
 integrate over the region
 $$
 R=\{(\xi,\eta);\,
 t-r\leq \eta\leq t+r,\, \, t+r\leq \xi< \infty\},
 $$
 using that $r\overline{\phi}(t,r)$ vanishes when
 when $r=0$ and when $\xi=\infty$, we obtain
 \beq
 r\overline{\phi}(t,r)=4\int_{t+r}^{\infty} \int_{t-r}^{t+r}
 \rho \overline{F}_0(s,\rho)  \, d\eta d\xi,
 \qquad s=\frac{\xi+\eta}{2},\quad \rho=\frac{\xi-\eta}{2}.
 \eq

The following Lemma deals with  \eqref{eq:RHomAsymApp} and the first two terms in \eqref{eq:Rasymest:B}.

\begin{lemma}\label{lem:homoremainderest}
  Suppose in the region $R$,
\beq
 \rho \overline{F}_0(s,\rho)
 \leq g(\xi) \big(|\eta| \chi_{|\eta|<\xi^a} +\langle\eta\rangle^{-2\gamma} \chi_{\eta<0}  \big),\quad\text{where}\quad g(\xi)=\frac{\ln\langle \xi\rangle}{\langle\xi\rangle^3}\,,
\eq
and $\gamma\geq 1/2$, and $a=1/2$. Then
\begin{equation}\label{eq:remainder:radial}
\overline{\phi}(t,r)\leq \frac{\ln{\tplusr}}{\tplusr^2} .
\eq
\end{lemma}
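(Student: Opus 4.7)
The plan is to substitute the pointwise hypothesis into the radial identity
\begin{equation*}
r\overline{\phi}(t,r) = 4\int_{t+r}^{\infty}\!\!\int_{t-r}^{t+r} \rho\, \overline{F}_0(s,\rho)\,\ud\eta\,\ud\xi,
\qquad s=\tfrac{\xi+\eta}{2},\ \rho=\tfrac{\xi-\eta}{2},
\end{equation*}
and carry out the resulting double integral. Writing $u=t+r$ and $v=t-r$, the hypothesis splits the right-hand side as $I+II$ corresponding to its two summands; the goal is to show $I+II\les r\log\tplusr/\tplusr^2$, which is equivalent to the stated pointwise bound after dividing by $r$.

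The piece $II$ coming from $\langle\eta\rangle^{-2\gamma}\chi_{\eta<0}$ is the easier one. Its cutoff forces $v<0$, hence $r\geq t$ and in particular $r\gtrsim u$, so it suffices to show $II\les \log\tplusr/\tplusr^2$. The inner integral $\int_v^0\langle\eta\rangle^{-2\gamma}\,\ud\eta$ is uniformly bounded for $\gamma>1/2$ and $\les\log\langle u\rangle$ for $\gamma=1/2$, while the primitive of $\log\xi/\xi^3$ gives $\int_u^\infty g(\xi)\,\ud\xi\sim \log\langle u\rangle/\langle u\rangle^2$; combining these yields the claimed bound (with a harmless extra logarithm in the critical case $\gamma=1/2$).

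For $I$ I plan to swap the order of integration. The cutoff $\chi_{|\eta|<\xi^{1/2}}$ enforces $\xi\geq \eta^2$, so the effective $\xi$-range becomes $[\max(u,\eta^2),\infty)$. Setting $G(a):=\int_a^\infty g(\xi)\,\ud\xi\sim \log\langle a\rangle/\langle a\rangle^2$,
\begin{equation*}
I= 4\int_v^u |\eta|\,G(\max(u,\eta^2))\,\ud\eta\,.
\end{equation*}
I then distinguish two sub-cases. If $v\leq \sqrt u$ (Case A, which includes the entire exterior regime $v\leq 0$), the sub-range $|\eta|\leq\sqrt u$ contributes $\les G(u)\cdot u\sim\log u/u$, while the sub-range $|\eta|>\sqrt u$ contributes $\les\log u/u$ via the primitive of $\log|\eta|/|\eta|^3$, so $I\les \log u/u$; but Case A implies $r=(u-v)/2\geq (u-\sqrt u)/2\gtrsim u$, and dividing by $r$ gives the claimed bound. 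If $v>\sqrt u$ (Case B, the deep interior $r\ll t$), the $\eta$-integration is restricted to $[v,u]$ and the improved lower limit $\eta^2\geq v^2$ in the $\xi$-integration yields
\begin{equation*}
I\les \int_v^u \log\eta/\eta^3\,\ud\eta\les \log v/v^2\,,
\end{equation*}
which is then compared algebraically to $r\log u/u^2$ using $v^2\geq u$ and $v\leq u-2r$.

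The main obstacle is Case B, where the factor $r$ in the target bound is small and must be extracted precisely from the improved $\xi$-lower-limit $v^2$ in $g$; this gain is only exposed after the swap of order of integration, since a naive estimate $|\eta|\chi_{|\eta|<\xi^{1/2}}\leq \xi^{1/2}$ followed by an $\xi$-integration loses it and gives the weaker $\log u/u$ bound. The elementary algebraic comparison $\log v/v^2\les r\log u/u^2$ across the whole sub-range $\sqrt u\leq v\leq u-2r$ is what closes the argument.
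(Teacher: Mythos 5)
Your reorganization of the double integral via Fubini is a clean alternative to the paper's proof (which keeps the order $\ud\eta\,\ud\xi$ and instead does casework on $(t,r)$), and your Case A and your treatment of the $\langle\eta\rangle^{-2\gamma}\chi_{\eta<0}$ term are correct. The gap is in Case B. The inequality you invoke to close it, $\log v/v^2\les r\log u/u^2$ for $\sqrt u\le v=u-2r$, is false: since $v\ge\sqrt u$ forces $\log v\sim\log u$, the claim reduces to $u^2/v^2\les r$, and taking $t\to\infty$ with $r$ small and fixed (or $r\to 0$) gives $v=u-2r\sim u$, hence $u^2/v^2\to 1$ while $r$ stays small — the implied constant would have to be at least $1/r$. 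The loss occurs at the step $\int_v^u\log\eta\,\eta^{-3}\,\ud\eta\les\log v/v^2$: replacing the interval $[v,u]$, whose length is exactly $2r$, by the full tail $[v,\infty)$ discards precisely the factor of $r$ that the target bound requires in the deep interior. This is the same factor the paper is careful to extract for $r<t/2$, by keeping the finite upper limit and computing $\int_{(t-r)^2}^{(t+r)^2}g(\xi)\bigl(\xi-(t-r)^2\bigr)\,\ud\xi\les \ln\tplusr\,\tplusr^{-6}\int_0^{4tr}s\,\ud s\les r^2\ln\tplusr/\tplusr^{4}$, and, in the companion lemma, by the mean value theorem.

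The repair within your framework is one more split of Case B. If $r\ge v$, then $u=v+2r\le 3r$, so $u^2/v^2\le u^2/u=u\le 3r$ (using $v^2\ge u$ and $r\le u$), and your tail bound together with $\log v\le\log u$ does give $\log v/v^2\les r\log u/u^2$. If $r<v$, then $u=v+2r<3v$, so $v\sim u$, and you should instead use the length of the interval: $\int_v^u\log\eta\,\eta^{-3}\,\ud\eta\le(u-v)\sup_{[v,u]}\log\eta\,\eta^{-3}\les r\log u/u^{3}$, which after dividing by $r$ is stronger than needed. With that amendment your argument closes and is, if anything, a little more systematic than the paper's.
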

\begin{proof}
  We divide the proof up in the two cases $r>t$, and $r<t$.

First  if $r>t$ then we must evaluate
\beq
\int_{t-r}^{t+r}
 \big(|\eta| \chi_{|\eta|<\xi^a} +\langle\eta\rangle^{-2\gamma} \chi_{\eta<0}  \big)    d\eta\leq \int_{0}^{\min\{t+r,\xi^a\}}\!\!\!\!|\eta|
  \, d\eta+\int_{-\min\{|t-r|,\xi^a\}}^{0}\!\!\!\!
|\eta| d\eta+\int_{t-r}^{0}
\langle\eta\rangle^{-2\gamma}  d\eta\les \min\{(t+r)^2,\xi\}+\ln{\langle t\shortminus r\rangle},
\eq
using  now  that $\gamma>1/2$ and $a=1/2$.
Therefore
\begin{multline}
\int_{t+r}^{\infty} g(\xi) \int_{t-r}^{t+r}
 \big(|\eta| \chi_{|\eta|<\xi^a} +\langle\eta\rangle^{-2\gamma} \chi_{\eta<0}  \big)   \, d\eta\, d\xi \leq \int_{t+r}^{\infty} g(\xi) \big(\min\{(t+r)^2,\xi\}+\ln{\langle t\shortminus r\rangle}\big)\, d\xi\\
 \!=\!\int_{t+r}^{(t+r)^2}\!\!\!\!\!\!\!\! g(\xi)\xi\, d\xi+ (t\!+\!r)^2\!\!\! \int_{(t+r)^2}^\infty\!\! g(\xi)\, d\xi +\ln{\langle t\shortminus r\rangle}\!\!\!\int_{t+r}^\infty\!\!\! g(\xi)\ud \xi
 \les \frac{\ln{\xi}}{\xi}\Big|_{\xi=\langle t+r\rangle}\!\! +(t\!+\!r)^2 \frac{\ln{\xi}}{\xi^2}\Big|_{\xi=\langle t+r\rangle^2}\!\!+\ln{\langle t\shortminus r\rangle}\frac{\ln{\xi}}{\xi^2}\Big|_{\xi=\langle t+r\rangle}\!.\!\!\!\!\!
\end{multline}
Hence in that case we can estimate
\beq
\overline{\phi}(t,r)\les \frac{\ln{\tplusr}}{r\tplusr},\quad r>t.
\eq

Second if $r<t$,  
we evaluate first for $\xi\geq t+r$,
\beq
\int_{t-r}^{t+r}
 \big(|\eta| \chi_{0<\eta<\xi^{1/2}} +\langle\eta\rangle^{-2\gamma} \chi_{\eta<0}  \big)   \, d\eta\leq \int_{t-r}^{\min\{t+r,\xi^{1/2}\}}|\eta|  \, d\eta\les \min\{(t+r)^2,\xi\}-(t-r)^2,
\eq
and note that the integral vanishes if $t-r>\xi^{1/2}$.
Hence when $t-r>0$ we have
\begin{equation}
\int_{t+r}^{\infty} g(\xi) \int_{t-r}^{t+r}
 \big(|\eta| \chi_{0<\eta<\xi^{1/2}} +\langle\eta\rangle^{-2\gamma} \chi_{\eta<0}  \big)   \, d\eta\, d\xi \leq \int_{\max\{t+r,(t-r)^2\}}^{\infty} g(\xi) \big(\min\{(t+r)^2,\xi\}-(t-r)^2\big)\, d\xi.
\end{equation}
Now if $t+r>(t-r)^2$ then we estimate this by
\begin{multline}
 \int_{t+r}^{(t+r)^2} g(\xi) \big(\xi-(t-r)^2\big)\, d\xi
 +\int_{(t+r)^2}^{\infty} g(\xi) \big((t+r)^2-(t-r)^2\big)\, d\xi
 \les  \frac{\ln{\xi}}{\xi}\Big|_{\xi=\langle t+r\rangle}+tr \frac{\ln{\xi}}{\xi^2}\Big|_{\xi=\langle t+r\rangle^2}\,,
\end{multline}
which gives  again that
\beq
\overline{\phi}(t,r)\les \frac{\ln{\tplusr}}{r\tplusr},\qquad |r-t|<(t+r)^{1/2}.
\eq

In the remaining case when $r\!<\!t$ and $(t\!-\!r)^{\,2}\!>\!t\!+\!r$,
we distinguish  the cases $r\!<\!t/2$, and $r\!>\!t/2$.
We have
\begin{equation}
 \int_{(t-r)^2}^{(t+r)^2} g(\xi) \big(\xi-(t-r)^2\big)\, d\xi
 +\int_{(t+r)^2}^{\infty} g(\xi) \big((t+r)^2-(t-r)^2\big)\, d\xi
 \les \frac{\ln{\xi}}{\xi}\Big|_{\xi=\langle t-r\rangle^2}+tr\frac{\ln{\xi}}{\xi^2}\Big|_{\xi=\langle t+r\rangle^2}\,,
\end{equation}
which also gives \eqref{eq:remainder:radial}, at least  for $t/2<r<t$:
\beq
\overline{\phi}(t,r)\les \frac{\ln{\langle t-r\rangle}}{r\langle t-r\rangle)^2}+\frac{\ln\tplusr}{\tplusr^3}\les \frac{\ln \tplusr}{\tplusr^2},\quad |r-t|^2>t+r.
\eq

If $r<t/2$,  we estimate the first integral differently:
\begin{equation*}
 \int_{(t-r)^2}^{(t+r)^2} g(\xi) \big(\xi-(t-r)^2\big)\, d\xi
  \les \frac{\ln\tplusr}{\tplusr^6}
  \int_{0}^{(t+r)^2-(t-r)^2}\!\!\!\!\!\!\!\!\!\!\!\!\!\!s\, ds
   \les \frac{\ln\tplusr}{\tplusr^4}r^2,
\end{equation*}
which in particular gives \eqref{eq:remainder:radial}, also in the case $r<t/2$.
\end{proof}

  The second Lemma applies to the last term in \eqref{eq:Rasymest:B}.

  \begin{lemma}
    Suppose in the region $R$,
    \beq
     \rho \overline{F}_0(s,\rho)
 \leq
 \frac{1}{\langle \xi\rangle \langle \eta\rangle^{2+2\gamma}}\chi_{ \xi^a < \eta < \xi/2},
\eq
where $\gamma\geq 1/2$, and $a=1/2$. Then
\beq
\overline{\phi}(t,r)\les  \frac{1}{\tplusr^2}.
\eq
\end{lemma}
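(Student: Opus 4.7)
The strategy parallels Lemma~\ref{lem:homoremainderest}: by rotational invariance and positivity of $E_-$, the bound reduces to estimating the double integral
\begin{equation*}
r\overline{\phi}(t,r) = 4\int_{t+r}^{\infty}\!\!\int_{t-r}^{t+r}\rho\,\overline{F}_0(s,\rho)\,d\eta\,d\xi\,,
\end{equation*}
and under the hypothesis $\rho\overline{F}_0\lesssim \chi_{\sqrt\xi<\eta<\xi/2}/(\xi\langle\eta\rangle^{2+2\gamma})$ the goal becomes to show that this integral is $\lesssim r\,\tplusr^{-2}$.

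The first step is to localise the effective integration region. The cutoff requires $\sqrt\xi<\eta<\xi/2$, while $R$ imposes $t-r\leq\eta\leq t+r$ and $\xi\geq t+r$; combining $\eta\leq t+r$ with $\eta>\sqrt\xi$ forces $\xi<(t+r)^2$, so $\xi\in(t+r,(t+r)^2)$ and $\eta\in(\max(t-r,\sqrt\xi),\min(t+r,\xi/2))$. I then split the analysis into two complementary regimes according to whether $r\gtrsim t$ or not. In the regime $r\geq t/2$ (which includes all $r>t$), the lower bound $\eta\geq\sqrt\xi$ dominates, and using $\int_{\sqrt\xi}^{t+r}\eta^{-(2+2\gamma)}\,d\eta\lesssim \xi^{-(1+2\gamma)/2}$ together with $\gamma\geq 1/2$, the remaining $\xi$-integral $\int_{t+r}^{(t+r)^2}\xi^{-(3+2\gamma)/2}\,d\xi\lesssim\tplusr^{-1}$ yields $r\overline{\phi}\lesssim \tplusr^{-1}$, and since $r\geq\tplusr/3$ here, this gives $\overline{\phi}\lesssim\tplusr^{-2}$. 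In the complementary regime $r<t/2$ the constraint $\eta\geq t-r\geq(t+r)/3$ becomes binding, the $\eta$-interval has length at most $2r$, and the integrand is controlled by $(t-r)^{-(2+2\gamma)}$, so the $\eta$-integration contributes $\lesssim r/(t-r)^{2+2\gamma}$; pairing with a logarithmic $\xi$-integral $\lesssim\ln\tplusr$ yields $r\overline{\phi}\lesssim r\ln\tplusr/(t-r)^{2+2\gamma}\lesssim r\ln\tplusr/\tplusr^{2+2\gamma}$, which is $\lesssim r\,\tplusr^{-2}$ for $\gamma\geq 1/2$ because $\ln\tplusr/\tplusr$ is uniformly bounded for $\tplusr\geq 1$.

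The main technical point will be the handling of the transition between the two regimes, where both lower bounds $t-r$ and $\sqrt\xi$ for $\eta$ can be comparable; the two-regime split succeeds precisely because in each of them exactly one of these two bounds is clearly dominant, which bypasses a more delicate analysis of intermediate cases. With the resulting bound on $\overline{\phi}$ in hand, the conclusion of the lemma is immediate.
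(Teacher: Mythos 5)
Your proposal is correct and follows essentially the same route as the paper: the same localisation $\xi\in(t+r,(t+r)^2)$ forced by $\sqrt\xi<\eta\leq t+r$, the same case split at $r\sim t/2$, the bound $\int_{\sqrt\xi}\eta^{-2-2\gamma}\,d\eta\lesssim\xi^{-(1+2\gamma)/2}$ in the first regime, and the extraction of a factor of $r$ from the short $\eta$-interval in the regime $r<t/2$ (the paper gets this factor via the mean value theorem rather than interval length times sup, but the two are interchangeable here). The only cosmetic difference is your phrasing that $\sqrt\xi$ ``dominates'' when $r\geq t/2$ — it need not, but since you only use it as a valid lower bound for $\eta$ to majorize the integral, the estimate stands.
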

\begin{proof}
    We integrate a function supported for $\xi^a\!<\!\eta\!<\!\xi/2$, over the region $t\!-\!r\!\leq \!\eta\!\leq \!t\!+\!r$, $\xi\!\geq\! t\!+\!r$, where we take $t\!+\!r\!\geq \!4$ so $\xi/2\!>\!\xi^a$, $a\!=\!1/2$.
    Note first that in $R$, the function vanishes for $\xi\!\geq\! (t\!+\!r)^2$\!.
  So if $2\!<\!\xi^a\!<\!t\!+\!r$,
\beq
\int_{t-r}^{t+r}
\frac{1}{\langle\eta\rangle^{2+2\gamma}} \chi_{\eta>\xi^a} \, d\eta\les \int_{\max\{t-r,\,\xi^a\}}^{t+r}\frac{1}{\langle \eta\rangle^{2+2\gamma}}  \, d\eta\les
\frac{1}{\langle \eta \rangle^{1+2\gamma}}\Bigr\rvert_{\eta=\max\{t-r,\,\xi^a\}}.
\eq
and it vanishes if $\xi^a>t+r$.
 Hence
  \begin{equation}
    \int_{t+r}^{\infty}\frac{1}{\langle\xi\rangle}\int_{t-r}^{t+r}
\frac{1}{\langle\eta\rangle^{2+2\gamma}} \chi_{\eta>\xi^a} \, \ud\eta\ud\xi\les \int_{t+r}^{(t+r)^2} \frac{1}{\langle \xi\rangle^{1+a(1+2\gamma)}}\ud \xi\les\frac{1}{\tplusr^{a(1+2\gamma)}}\les \frac{1}{\tplusr}\,.
\end{equation}
  This gives, after dividing by $r$, the stated estimate, at least for  $t/2<r$.
  In the case $r<t/2$ we estimate the first integral differently.
  Note that $t-r>\xi^a$ for as long as $\xi<(t-r)^2$, and $t+r<\xi<(t-r)^2$ is a nonempty interval for $r<t/2$ , at least for $t\geq 6$.
  So we simply estimate $\chi_{\eta>\xi^a}\leq 1$ on $R$, and for $\xi^a<t+r$,
  \beq
\int_{t-r}^{t+r}
\frac{1}{\langle \eta\rangle^{2+2\gamma}} \chi_{\eta>\xi^a} \, d\eta\les \int_{t-r}^{t+r}\frac{1}{\eta^{2+2\gamma}}  \, d\eta\les
\frac{1}{(t-r)^{1+2\gamma}}-\frac{1}{(t+r)^{1+2\gamma}}\les
\frac{r}{(t+r)^{2+2\gamma}}\,,
\eq
where  we used the mean value theorem, to get the necessary factor of $r$, in the case $r<t/2$.
Finally,
\begin{equation}
  \int_{t+r}^{(t+r)^2}\frac{1}{\langle\xi\rangle}\int_{t-r}^{t+r}
\frac{1}{\langle\eta\rangle^{2+2\gamma}} \chi_{\eta>\xi^a} \, \ud\eta\ud\xi\les \frac{r\log\langle t+r\rangle}{\langle t+r\rangle^{2+2\gamma}}\,,
\end{equation}
which implies the stated estimate, after dividing by $r$, with $\gamma>0$.
\end{proof}

\section{Appendix for the exterior problem}

\label{app:ext}

\subsection{Formulas for the homogeneous solution in the exterior.}\label{sec:exteriorhomogeneous}

\subsubsection{Formulas for the solution in the exterior with  homogeneous initial data of degree $-1$}
\label{sec:formula:ext:hom}

In this section we will derive formulas for the solution to the initial value problem with homogeneous initial data, in the exterior,
\beq \label{eq:app:hom:ext:one}
\Box \,\phi_1=0,\quad |x|>t,\qquad \phi\,\Big|_{t=0}=f=M(\omega)/r,\qquad \pa_t\phi\, \Big|_{t=0}=g=N(\omega)/r^2\,.
\eq

\begin{lemma}
  The solution to \eqref{eq:app:hom:ext:one} is given by
 \beq
\phi_1=-x^i t^{-1}\Psi^{\,\text{ext}}_{1,0}[M_i]+\Psi^{\text{ext}}_{1,0}[N]\,, \qquad |x|>t\,,
\eq
where $M_i$ is defined by
\beq
 M_{i}(\omega)r^{-2}=\pa_i \big( M(\omega)r^{-1}\big)
\eq
and
\beq \label{eq:Psi:1:ext}
\Psi^{\,\text{ext}}_{1,0}[N](t,r\omega)
=r^{-1}{\mathcal{I}_{\,}}[N]\big(\omega,z_0\big),\qquad
r>t\,,\qquad  z_0=\sqrt{1-(t/r)^2}\,,
\eq
where
\beq
\mathcal{I}_{\,}[N](\omega,z_0)=\frac{1}{2\pi_{\,}}\int_{\langle \omega,\sigma\rangle>z_0} \,\frac{  N(\sigma )\, dS(\sigma)}{\sqrt{\langle \omega,\sigma\rangle^2-z_0^2}}\,.
\eq
\end{lemma}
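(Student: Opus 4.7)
The plan is to verify the formula by reducing to the classical Kirchhoff representation. By linearity, it suffices to treat the two pieces of the data separately: show that the solution with data $(0, N(\omega)/r^2)$ is $\Psi_{1,0}^{\text{ext}}[N]$, and that the solution with data $(M(\omega)/r, 0)$ is $-x^i t^{-1}\Psi_{1,0}^{\text{ext}}[M_i]$.

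First I would derive the formula for $\Psi_{1,0}^{\text{ext}}[N]$ from Kirchhoff's formula $\phi(t,x) = (4\pi t)^{-1}\int_{|y-x|=t}g(y)\,dS_y$ applied to $g(y)=N(\hat y)/|y|^2$. The key geometric observation is that in the exterior $|x|>t$, a ray $\{s\sigma:s>0\}$ from the origin meets the sphere $|y-x|=t$ in either zero or two points. Solving the quadratic $s^2-2s\langle\sigma,x\rangle+r^2-t^2=0$ shows that two real intersections occur precisely when $\sigma$ lies in the spherical cap $\langle\omega,\sigma\rangle > z_0$ with $z_0=\sqrt{1-(t/r)^2}$, and the two values are $s_\pm = r\langle\omega,\sigma\rangle \pm r\sqrt{\langle\omega,\sigma\rangle^2-z_0^2}$. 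Using the co-area formula together with $dy = s^2\,ds\,dS(\sigma)$ in polar coordinates about the origin, the surface measure $dS_y$ pulls back as $s^2\,dS(\sigma)/(r\sqrt{\langle\omega,\sigma\rangle^2-z_0^2})$ at each root. Substituting $g(s\sigma)=N(\sigma)/s^2$ makes the $s^2$ factors cancel, and the $s_\pm$ contributions simply add to give a factor of $2$. The result simplifies to $(2\pi r)^{-1}\!\int_{\langle\omega,\sigma\rangle>z_0}N(\sigma)/\sqrt{\langle\omega,\sigma\rangle^2-z_0^2}\,dS(\sigma)$, which is exactly $r^{-1}\mathcal{I}[N](\omega,z_0)$.

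For the second half, I would use two symmetries of $\Box$. Let $\psi$ denote the solution with data $(0, M(\omega)/r)$ in $|x|>t$; it exists by Kirchhoff's formula there (no singularity is encountered since $y=0$ never lies on $|y-x|=t$). Then $\partial_t\psi$ solves the wave equation and, because $\psi|_{t=0}=0$ implies $\Delta\psi|_{t=0}=0$, carries the data $(M/r,0)$, so the solution of the second subproblem is $\partial_t\psi$. Now two facts identify this with the claimed expression: (i) since $\Box$ is translation-invariant in space, $\partial_i\psi$ solves with data $(0,\partial_i(M/r))=(0,M_i(\omega)/r^2)$, so by the first half $\partial_i\psi = \Psi_{1,0}^{\text{ext}}[M_i]$; (ii) since the data $M(\omega)/r$ is homogeneous of degree $-1$ and $\psi|_{t=0}\equiv 0$, scaling invariance of the wave equation and uniqueness force $\psi$ itself to be homogeneous of degree $0$ in $(t,x)$, so Euler's identity gives $t\partial_t\psi + x^i\partial_i\psi = 0$. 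Combining, $\partial_t\psi = -(x^i/t)\partial_i\psi = -x^i t^{-1}\Psi_{1,0}^{\text{ext}}[M_i]$, and adding this to $\Psi_{1,0}^{\text{ext}}[N]$ yields the stated formula for $\phi_1$.

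The main technical step is the change-of-variables computation in the first half, in particular keeping careful track of both preimages $s_\pm(\sigma)$ on the sphere $|y-x|=t$ and verifying that the Jacobian exactly cancels the $|y|^2$ in the denominator of the data. Everything else is an application of the symmetries of $\Box$ together with Euler's identity, and in particular one never needs to differentiate the integral representation of $\Psi_{1,0}^{\text{ext}}[M]$ directly.
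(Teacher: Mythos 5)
Your proposal is correct. The first half (the $\Psi^{\,\text{ext}}_{1,0}[N]$ formula for data $(0,N(\omega)/r^2)$) is in substance identical to the paper's argument: both reduce Kirchhoff's formula to an integral over the spherical cap $\langle\omega,\sigma\rangle>z_0$ by finding the two intersection points $\rho_\pm=r\big(\langle\omega,\sigma\rangle\pm\sqrt{\langle\omega,\sigma\rangle^2-z_0^2}\big)$ of each ray with $S(x,t)$ and observing that the Jacobian factor $|\rho_\pm-r\langle\omega,\sigma\rangle|^{-1}\rho_\pm^2$ cancels the $\rho^{-2}$ in the data, the two roots contributing equally. The second half is where you genuinely diverge. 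The paper stays inside Kirchhoff's formula: for $f(y)=M(\omega)/|y|$ homogeneous of degree $-1$, Euler's identity on the \emph{data} gives $f(y)+\nabla f(y)\cdot(y-x)=-\nabla f(y)\cdot x$ pointwise in the integrand, and the remaining integral is recognized as the Kirchhoff integral for source data $\partial_i f=M_i(\omega)/r^2$, yielding $-x^it^{-1}\Psi^{\,\text{ext}}_{1,0}[M_i]$ directly. You instead introduce the auxiliary solution $\psi$ with data $(0,M/r)$, identify the sought solution as $\partial_t\psi$, use spatial translation invariance to get $\partial_i\psi=\Psi^{\,\text{ext}}_{1,0}[M_i]$ from the first half, and then trade $\partial_t$ for $-x^i\partial_i/t$ via Euler's identity applied to the \emph{solution}, whose degree-$0$ homogeneity you deduce from scaling invariance plus uniqueness (finite speed of propagation in $|x|>t$, which is valid since the backward cone from any point with $|x|>t$ meets $\{t=0\}$ away from the origin). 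Your route is more conceptual and avoids manipulating the $f$-terms of Kirchhoff's formula, at the cost of invoking uniqueness and the scaling argument; the paper's route is a purely local algebraic identity in the integrand and needs neither. Both are complete proofs.
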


Note that with
\beq
\mathcal{J}_{\,}[N](\omega,z_0)=\frac{1}{2\pi_{\,}}\int_{\langle \omega,\sigma\rangle>z_0} \,\frac{ \langle \omega,\sigma\rangle N(\sigma )\, dS(\sigma)}{\sqrt{\langle \omega,\sigma\rangle^2-z_0^2}}
\eq
we can write
 \beq
-x^i t^{-1}\Psi^{\,\text{ext}}_{1,0}[M_i]
=-t^{-1}\mathcal{J}_{\,}[M](\omega,z_0)+t^{-1}\omega^i\mathcal{I}_{\,}[ \nang_{i} M](\omega,z_0),
\eq
because
\beq
 M_{i}(\omega)=-\omega_i M(\omega)+\nang_i M(\omega) \,.
\eq

\begin{proof}
We derive this formula using Kirchhoff's formula
\beq
\phi_1(t,x)=\frac{1}{4\pi t^2} \int_{S(x,t)} \Big( t g(y) + f(y)+\nabla f(y)\cdot (y-x) \Big)\, \ud S(y),
\eq
where $S(x,t)$ is the sphere of radius $t$ centered at $x$.

First, let $\phi_g$ be the solution corresponding to $f=0$, which is
\begin{equation}\label{eq:phi:g}
\phi_g(t,x)=\frac{1}{4\pi t}\int_{S(x,t)} g(y)\, dy=\frac{1}{2\pi }\int_{\mathbb{R}^3} \delta\big( |y-x|^2-t^2\big) \, g(y)\, dy\,.
\end{equation}
Note this can also be interpreted as $\phi_g=E\ast (g\delta(t))$ where $E$ is the forward fundamental solution to the wave equation. Here and below we can use  that
\beq
\delta\big(f(\rho)\big)=\sum_{\rho_i; f(\rho_i)=0} |f^{\,\prime}(\rho_i)|^{-1} \delta(\rho-\rho_i)\,.
\eq
Introducing spherical coordinates $x=r\omega$ and $y=\rho\sigma$, we have
\beq
\phi_g(t,x)
=\frac{1}{2\pi }\iint \delta\big( r^2-t^2-2r\rho\langle \omega,\sigma\rangle +\rho^2\big) \, g(\rho\sigma )\, \rho^2 d\rho\, dS(\sigma).
\eq
We then perform the  integral in $\rho$ first,
\beq\label{eq:basicinitialdatasol}
\phi_g(t,x)
=\frac{1}{4\pi }\int \,{\sum}_i\frac{ g(\rho_i{}_{\,}\sigma )\, \rho_i^2 dS(\sigma)}{\big|\rho_i\!-\!r\langle \omega,\sigma\rangle\big|},
\eq
where the integral is  over all $\sigma\in\mathbb{S}^2$ such that there  are real positive solutions $\rho=\rho_i(t,r,\sigma,\omega)$ to
$$
 r^2-t^2-2r\rho\langle \omega,\sigma\rangle +\rho^2=0.
 $$
Completing the square
$$
 r^2\big(1-\langle \omega,\sigma\rangle^2\big)-t^2 +\big(\rho-r\langle \omega,\sigma\rangle\big)^2=0,
$$
we see that
$$
\big(\rho-r\langle \omega,\sigma\rangle\big)^2=r^2\bigtwo( \langle \omega,\sigma\rangle^2-\big(1-(t/r)^2\big)\bigtwo).
$$
Hence
$$
\big|\rho-r\langle \omega,\sigma\rangle\big|
= r\sqrt{\langle \omega,\sigma\rangle^2-z_0^2},\qquad\text{where}\quad
z_0=\sqrt{1-(t/r)^2}\,.
$$

To find the solutions $\rho_i$,
note that in the case $r>t$,  the ray from the origin in the direction $\sigma$ in the initial data, namely the ray $\rho\mapsto (t=0,\rho\sigma)$ at $t=0$,  either does not intersect the sphere $S(x,t)$, or it intersects this sphere at two points. If $\langle \omega,\sigma\rangle<z_0$ then the ray in the direction of $\sigma$ does not intersect the sphere but if $\langle \omega,\sigma\rangle>z_0$ then it intersects the sphere at two points
\begin{equation}
\rho_\pm=r\bigtwo(\langle \omega,\sigma\rangle \pm \sqrt{\langle \omega,\sigma\rangle^2-z_0^2}\bigtwo).
\end{equation}

Hence in the case $f=0$, $g(\rho\sigma)=N(\sigma) \rho^{-2}$ we get from \eqref{eq:basicinitialdatasol}  the solution $\phi_1=\phi_g=\Psi_{1,0}^{ext}[N]$, with $\Psi_{1,0}^{ext}[N]$ given by \eqref{eq:Psi:1:ext}, by summing up the contributions from both  points.

Secondly, for the solution corresponding to $g=0$, and $f(\rho\sigma)=M(\sigma)/\rho$ we have that
\begin{equation}
  \phi_1(t,x)=-\frac{1}{4\pi t^2}\int_{S(x,t)}\nabla f(y)\cdot x\: \ud S(y)=-\frac{1}{t}\phi_{\partial_i f}\cdot x^i
\end{equation}
where $\phi_{\partial_i f}$ is given by \eqref{eq:phi:g}, with $\partial_if$ in the role of $g$.
Introducing the functions $M_i(\sigma)$, we can then apply the formula above to get
\begin{equation}
  \phi_1(t,x)=-\frac{x^i}{t}\Psi_{1,0}^{\text{ext}}[M_i]\,,\qquad \text{where}\quad  \partial_i (M(\sigma) \rho^{-1})=M_i(\sigma)\rho^{-2}\,. \tag*{\qedhere}
\end{equation}
\end{proof}

\subsubsection{Exterior expansion of the spherical integrals}
With $z=\langle\omega,\sigma\rangle$ and $z_0=\sqrt{1-(t/r)^2}$ we can write
\beq\label{eq:app:Psi:one}
r\Psi^{\,\text{ext}}_{1,0}[N]=\int_{z_0}^1 \,\frac{  N(\omega,z )\, dz}{\sqrt{z^2-z_0^2}},
\eq
and
\beq
t\Psi^{\text{ext}}_{1,1}[M]=\int_{z_0}^1  \,\frac{z  \,M(\omega,z)\, dz}{2\sqrt{z^2-z_0^2}}.
\eq
We want to find an expansion as $z_0\to 0$. If we use a Taylor expansion of $N(\omega,z)=\sum N_k(\omega) z^k$ and $M(\omega,z)=\sum M_k(\omega) z^k$ around $z=0$ we see that we must evaluate integrals of the type
\beq
\int_{z_0}^1\!\frac{z^k \, dz}{2\sqrt{z^2-z_0^2}}
=z_0^k\int_{1}^{1/z_0}\!\!\!\!\!\!\!\!\frac{z^k \, dz\!\!\!\!\!\!\!\!}{2\sqrt{z^2-1}} .
\eq
If $k$ is even we divide up the integral
\beq
z_0^k\int_{1}^{1/z_0}\!\!\!\!\!\!\!\!\frac{z^k \, dz\!\!\!\!\!\!\!\!}{2\sqrt{z^2-1}}
=z_0^k\int_{1}^{2}\!\!\frac{z^k \, dz\!\!\!}{2\sqrt{z^2-1}}
+z_0^k\int_{2}^{1/z_0}\!\!\!\!\!\!\!\!\frac{z^{k-1} \, dz\!\!\!\!\!\!\!\!}{2\sqrt{1-z^{-2}}}\,.
\eq
Here the first integral is independent of $z_0$ and the second integral can be evaluated by expanding the integrand in a power series in $z^{k-1-2\ell}$ for $\ell=0,1,\dots$. This gives a power series in $z_0^{2\ell}$  plus a term with $z_0^k \ln{z_0}$.
On the other hand if $k$ is odd we can integrate by parts a number of times
\beq
z_0^k\int_1^{1/z_0}\!\!\!\!(z^2
\shortminus 1)^{m-1/2} z^{k\shortminus 2m}\, dz=z_0^k\frac{(z^2\shortminus 1)^{m+1/2}}{ 2(m+1/2)} z^{k\shortminus 2m\shortminus 1}\Big|_{1}^{1/z_0}
-z_0^k\int_1^{1/z_0}\!\!\frac{(z^2\shortminus 1)^{m+1/2}\!\!\!\!}{ 2(m+1/2)}  (k\shortminus 2m\shortminus 1) z^{k\shortminus 2m\shortminus 2} dz .
\eq
Here the first term in the right has an even power series expansion of the form
$z_0^{2\ell}$, $\ell=0,1,\dots$, and the integral will vanish after integrating by parts
$m=(k-1)/2$ times. In either case we get a power series in $z_0^{2\ell}$, $\ell=0,1,\dots$
plus in the even case $z_0^k 2^{-1}\ln{z_0^2}$. Since $z_0^2=1-(t/r)^2 =\big(1-2(r-t)/r\big)(r-t)/r$ we conclude that  we have for either  an expansion of the form
\beq
r\Psi^{\text{ext}}_{1,0}[N]={\sum}_{j=0}^{k} N_{1j}^{\text{ext}}(\omega)\, y^j  \ln\Big(\frac{1}{y}\Big)+{\sum}_{j=0}^k N_{0j}^{\text{ext}}(\omega) y^j,\qquad y=(r-t)/r,
\eq
and
\beq
t\Psi^{\,\text{ext}}_{1,1}[M]={\sum}_{j=0}^k M_{0j}^{\text{ext}}(\omega) y^j,\qquad y=(r-t)/r\,.
\eq

\subsubsection{Leading orders of the exterior expansion for degree $-1$.}
\label{sec:ext:hom:expansion:leading}

In this section we compute the coefficients in the expansion of \eqref{eq:app:Psi:one} more precisely.
We have
\begin{multline}
\int_{z_0}^{1}\,\,\frac{ dz}{\!\!\!\!\!\sqrt{z^2\shortminus z_0^2}}
=\!\int_{1}^{1/z_0}\!\!\!\!\!\!\!\!\frac{ dz\!\!\!\!\!}{\!\!\!\sqrt{z^2\shortminus 1}}=\\
=\frac{1}{2}\ln{\Big(\frac{1}{z_0^2}\Big)}+\!\int_{1}^{\infty}\!\!\!\!\!\big(\frac{ 1}{\sqrt{z^2\shortminus 1}}-\frac{1}{z}\big)\, dz
-\!\int_{1/z_0}^{\infty}\!\!\!\big(\frac{ 1}{\sqrt{z^2\shortminus 1}}-\frac{1}{z}\big)\, dz
=\frac{1}{2}\ln{\Big(\frac{1}{z_0^2}\Big)}+\ln{2} -\frac{1}{4} z_0^2+ O(z_0^4),
\end{multline}
where we used that the primitive of $1/\sqrt{x^2-1}$ is equal to
$\ln{\big( \sqrt{x^2-1}+x\big)}$.
Since
\begin{equation}
 z_0^2=1-(t/r)^2 =\big(2-(r-t)/r\big)(r-t)/r
\end{equation}
it follows that
\beq
 \ln{\Big(\frac{1}{z_0^2}\Big)}=\ln{\frac{2r}{r\shortminus t}}-\ln{\Big(1-\frac{r\shortminus t}{2r}\Big)}-\ln{4}=\ln{\frac{2r}{r\shortminus t}}-\ln{4}+\frac{r\shortminus t}{2r}+O\Big(\frac{r\shortminus t}{2r}\Big)^2.
 \eq
 Hence
 \beq
\int_{z_0}^{1}\,\,\frac{ dz}{\!\!\!\!\!\sqrt{z^2\shortminus z_0^2}}
=\frac{1}{2}\ln{\frac{2r}{r\shortminus t}}-\frac{3}{2}\frac{r\shortminus t}{2r}+O\Big(\frac{r\shortminus t}{2r}\Big)^2.
\eq
 We also have
 \begin{equation}
 \int_{z_0}^1 \,\frac{  z dz}{\sqrt{z^2-z_0^2}}=\sqrt{1-z_0^2}=1-\frac{1}{2} z_0^2+O(z_0^4)=1-\frac{r\shortminus t}{r}+O\Big(\frac{r\shortminus t}{2r}\Big)^2.
 \end{equation}
 Moreover
 \begin{equation}
\int_{z_0}^1 \frac{ \,z^2 dz}{\!\!\!\sqrt{z^2\shortminus z_0^2}}=\int_{z_0}^1 \sqrt{z^2\shortminus z_0^2} \, dz+\int_{z_0}^{1}\,\,\frac{ z_0^2 dz}{\!\!\!\!\!\sqrt{z^2\shortminus z_0^2}},\quad\text{and}\quad
\int_{z_0}^1 \frac{ \,z^2 dz}{\!\!\!\sqrt{z^2\shortminus z_0^2}}
=\sqrt{z^2\shortminus z_0^2}\, z\Big|_{z_0}^1-\int_{z_0}^1{\!\!\!\sqrt{z^2\shortminus z_0^2}} dz ,
\end{equation}
and so
 \begin{equation}
\int_{z_0}^1 \frac{ \,z^2 dz}{\!\!\!\sqrt{z^2\shortminus z_0^2}}=\frac{1}{2} \sqrt{1-z_0^2} +\frac{1}{2}\int_{z_0}^{1}\,\,\frac{ z_0^2 dz}{\!\!\!\!\!\sqrt{z^2\shortminus z_0^2}}=\frac{1}{2}-\frac{r\shortminus t}{2r}+\frac{r\shortminus t}{2r}\ln{\frac{2r}{r\shortminus t}}+ O\Big(\frac{r\shortminus t}{2r}\Big)^2.
 \end{equation}

 We can now compute the first term in the expansion of $r\Psi_{1,0}^{\text{ext}}$ more precisely:
 \beq
\int_{z_0}^1 \,\frac{  N(\omega,z )\, dz}{\sqrt{z^2-z_0^2}}=N(\omega,0)\int_{z_0}^1 \frac{\ud z}{\sqrt{z^2-z_0^2}}+\int_{z_0}^1 \,\frac{ N_1(\omega,z)\, z dz}{\sqrt{z^2-z_0^2}}\,, \qquad\text{where}\quad
N_1(\omega,z)=\frac{ N(\omega,z ){}_{\!}-N(\omega,0)}{z},
\eq
and
\beq
\int_{z_0}^1 \,\frac{  N_1(\omega,z )\,z dz}{\sqrt{z^2-z_0^2}}
= N_1(\omega,0)\int_{z_0}^1 \,\frac{ \,z dz}{\sqrt{z^2-z_0^2}}
+\int_{z_0}^1 \,\frac{ N_2(\omega,z)\,z^2 dz}{\sqrt{z^2-z_0^2}}
\qquad\text{where}\quad
N_2(\omega,z)=\frac{ N_1(\omega,z ){}_{\!}-N_1(\omega,0)}{z}\,.
\eq
We see that the leading order terms are
 \beq
\Psi^{\,\text{ext}}_{1,0}[N]\sim \frac{1}{2r}\ln{\frac{2r}{r\shortminus t}}\,\mathcal{F}[N](\omega) +\frac{1}{r}\widetilde{N}(\omega),
 \eq
 where
 \beq
 \mathcal{F}[N](\omega) = \frac{1}{2\pi} \int_{\langle \sigma,\omega\rangle =0}\!\!\!\!\! \!\! \!  N(\sigma) \,ds(\sigma),\quad\text{and}\quad
 \widetilde{N}(\omega)=\int_{0}^1 \frac{ N(\omega,z ){}_{\!}-N(\omega,0)}{z}dz .
 \eq

\subsubsection{\!\!\!Formulas for the forward  solution with homogeneous initial data of degree $-2$.}
\label{sec:formula:ext:hom:lower}

Here we solve the initial value problem with lower order homogeneous data,
\beq\label{eq:app:hom:ext:two}
\Box \,\phi_2=0,\quad |x|>t,\qquad \phi\,\Big|_{t=0}=f=K(\omega)/r^2,\quad \pa_t\phi\, \Big|_{t=0}=g=L(\omega)/r^3\,.
\eq

\begin{lemma} The solution to \eqref{eq:app:hom:ext:two}, in the exterior $|x|>t$, is given by
\beq 
\phi_2=-\Psi^{\,\text{ext}}_{2,1}[K]-x^i t^{-1}\Psi^{\,\text{ext}}_{2,0}[K_i]+\Psi^{\text{ext}}_{2,0}[L]\,,\qquad\text{where}\quad K_{i}(\omega)r^{-3}=\pa_i \big( K(\omega)r^{-2}\big)\,,
\eq
and
\beq
\Psi^{\,\text{ext}}_{2,0}[L]=\frac{1}{2\pi( r^2-t^2)}\int_{\langle \omega,\sigma\rangle>z_0} \,\frac{  \langle \omega,\sigma\rangle L(\sigma )\, dS(\sigma)}{\sqrt{\langle \omega,\sigma\rangle^2-z_0^2}}\,,\quad \Psi_{2,1}^{\text{ext}}[K]=\frac{1}{t}\Psi_{1,0}^{\text{ext}}[K]\,, \quad z_0=\sqrt{1-(t/r)^2}\,.
\eq
\end{lemma}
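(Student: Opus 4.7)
The plan is to apply Kirchhoff's formula, as in the proof of the previous lemma, and organize the computation around the two contributions: the $g$-integral and the $f$-integral. The key ingredients will be the same change of variables $y=\rho\sigma$ used in the derivation of $\Psi_{1,0}^{\text{ext}}$, together with one new algebraic identity and one use of Euler's homogeneity relation.

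\emph{The $g$-contribution.} Starting from
\[
\phi_g(t,x)=\frac{1}{2\pi}\int\delta(|y-x|^2-t^2)\,g(y)\,\ud y
\]
with $g=L(\omega)/\rho^3$ and following the same manipulations as for $\Psi_{1,0}^{\text{ext}}[N]$, the two real positive roots of $\rho^2-2r\langle\omega,\sigma\rangle\rho+(r^2-t^2)=0$ (for $\langle\omega,\sigma\rangle>z_0$) contribute, and the integrand in the spherical integral becomes $\rho_\pm^2\,g(\rho_\pm\sigma)/(r\sqrt{\langle\omega,\sigma\rangle^2-z_0^2})$. With $g(\rho\sigma)=L(\sigma)/\rho^3$, the sum over the two roots equals
\[
\frac{L(\sigma)}{r\sqrt{\langle\omega,\sigma\rangle^2-z_0^2}}\Bigl(\tfrac{1}{\rho_+}+\tfrac{1}{\rho_-}\Bigr),\qquad \tfrac{1}{\rho_+}+\tfrac{1}{\rho_-}=\frac{\rho_++\rho_-}{\rho_+\rho_-}=\frac{2r\langle\omega,\sigma\rangle}{r^2-t^2},
\]
using that $\rho_+\rho_-=r^2-t^2$ and $\rho_++\rho_-=2r\langle\omega,\sigma\rangle$. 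This is exactly the identity that accounts for the new $1/(r^2-t^2)$ prefactor in $\Psi_{2,0}^{\text{ext}}$, and gives $\phi_g=\Psi_{2,0}^{\text{ext}}[L]$.

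\emph{The $f$-contribution.} The contribution of $f$ to Kirchhoff's formula splits into two pieces,
\[
\frac{1}{4\pi t^2}\!\!\int_{S(x,t)}\!\! f(y)\,\ud S(y)\quad\text{and}\quad\frac{1}{4\pi t^2}\!\!\int_{S(x,t)}\!\!\nabla f(y)\!\cdot\!(y-x)\,\ud S(y).
\]
For the first piece, since $f(\rho\sigma)=K(\sigma)/\rho^2$ has the same structural form as the degree $-1$ data $N(\sigma)/\rho^2$, we can identify it with $\frac{1}{t}\Psi_{1,0}^{\text{ext}}[K]=\Psi_{2,1}^{\text{ext}}[K]$. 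For the second piece, I write $y-x$ as $y-x$ and use that $f$ is homogeneous of degree $-2$, so by Euler's identity $y\cdot\nabla f(y)=-2f(y)$. Hence
\[
\nabla f(y)\cdot(y-x)=-2f(y)-x^i\,\partial_i f(y),
\]
and since $\partial_i f(y)=K_i(\omega)/\rho^3$ is itself of the degree $-3$ type handled by the $g$-integral above (with $K_i$ in place of $L$), the second piece equals $-2\Psi_{2,1}^{\text{ext}}[K]-x^i t^{-1}\Psi_{2,0}^{\text{ext}}[K_i]$. Adding the two pieces, the $f$-contribution is $-\Psi_{2,1}^{\text{ext}}[K]-x^i t^{-1}\Psi_{2,0}^{\text{ext}}[K_i]$.

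\emph{Assembly.} Summing the $g$- and $f$-contributions gives
\[
\phi_2=\Psi_{2,0}^{\text{ext}}[L]-\Psi_{2,1}^{\text{ext}}[K]-x^i t^{-1}\Psi_{2,0}^{\text{ext}}[K_i],
\]
which is the claimed formula. The only substantive novelty compared to the degree $-1$ computation is the algebraic identity $1/\rho_++1/\rho_-=2r\langle\omega,\sigma\rangle/(r^2-t^2)$ (which produces the $(r^2-t^2)^{-1}$ denominator in $\Psi_{2,0}^{\text{ext}}$) and the application of Euler's identity that turns the $\nabla f\!\cdot\!(y-x)$ term into an extra copy of $\Psi_{2,1}^{\text{ext}}[K]$ plus a $\Psi_{2,0}^{\text{ext}}[K_i]$ term; I don't anticipate a serious obstacle, just careful bookkeeping of the coefficients $-2$ and $+1$ so that the $\Psi_{2,1}^{\text{ext}}[K]$ pieces combine to $-\Psi_{2,1}^{\text{ext}}[K]$.
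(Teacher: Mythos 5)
Your proposal is correct and follows essentially the same route as the paper: Kirchhoff's formula, the Vieta identity $1/\rho_++1/\rho_-=2r\langle\omega,\sigma\rangle/(r^2-t^2)$ for the degree $-3$ data, and the reduction of the $\nabla f\cdot(y-x)$ term to $-2f-x^i\partial_i f$ via Euler's identity (which the paper performs implicitly when it rewrites $f+\nabla f\cdot(y-x)=-f-\nabla f\cdot x$). The bookkeeping of the coefficients $+1$ and $-2$ combining to $-\Psi_{2,1}^{\text{ext}}[K]$ is exactly as in the paper's argument.
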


\begin{proof}
First, by Kirchhoff's formula, in the case $f=0$, and $g(\rho\sigma)=L(\sigma) \rho^{-3}$ we get
\begin{equation}
  \phi_2=\frac{1}{4\pi }\int \,{\sum}_i\frac{ L(\sigma)\, \rho_i^{-1} \ud S(\sigma)}{\big|\rho_i\!-\!r\langle \omega,\sigma\rangle\big|},
\end{equation}
and after summing over the two points,
$$
\frac{1}{\rho_+}\!+\frac{1}{\rho_-}=\frac{\rho_+\! +\rho_-}{\rho_+ \rho_-}
=\frac{2 r\langle \omega,\sigma\rangle}{r^2-t^2}.
$$
this gives the solution $\phi_2=\Psi_{2,0}^{\text{ext}}[L]$.

Secondly, the solution corresponding to $g=0$, and $f(\rho\sigma)=K(\sigma)/\rho^2$, is given by
\begin{equation}
  \label{eq:24}
    \phi_2=\frac{1}{4\pi t^2} \int_{S(x,t)} \Big( f(y)+\nabla f(y)\cdot (y-x) \Big)\, \ud S(y)=\frac{1}{4\pi t^2}\int_{S(x,t)}\Bigl(-f(y)-\nabla f(y)\cdot x\Bigr)\ud S(y)\,.
\end{equation}
The first term is precisely $-t^{-1}\phi_f$, and already computed above with $g$ in \eqref{eq:phi:g} replaced by $f$.
Also the second term is already computed in the previous case, $L$ replaced by $K_i$ as defined below. Therefore
\begin{gather}
  \label{eq:26}
  \phi_2=-\Psi_{2,1}^{\text{ext}}[K]-\frac{x^i}{t}\Psi_{2,0}^{\text{ext}}[K_i]\,,\\
  \Psi_{2,1}^{\text{ext}}[K](t,r\omega)=\frac{1}{t}\Psi_{1,0}^{\text{ext}}[K](t,r\omega)=\frac{1}{2\pi rt}\int_{\langle \omega,\sigma\rangle>z_0} \,\frac{  K(\sigma )\, dS(\sigma)}{\sqrt{\langle \omega,\sigma\rangle^2-z_0^2}},  \quad(r>t)\,. \tag*{\qedhere}
\end{gather}
\end{proof}

\subsubsection{Leading orders in the exterior expansion for degree $-2$.}
\label{sec:formula:ext:hom:lower:precise}

We proceed as in Appendix~\ref{sec:ext:hom:expansion:leading}.
 Integrating by parts we obtain
 \beq
\int_{z_0}^1 \,\frac{  L(\omega,z ) \, z\, dz}{\sqrt{z^2-z_0^2}}-
\int_{0}^1 \!\!\!{  L(\omega,z )\, dz}=\big(\sqrt{1-z_0^2}-1\big)L(\omega,1 )
-\int_{z_0}^1 \big(\sqrt{z^2\shortminus z_0^2}-z\big)L_z(\omega,z)\, dz+\int_{0}^{z_0}\!\!\!\! z L_z(\omega,z)\, dz .
\eq
Here
 \begin{multline}
\int_{z_0}^1 \big(\sqrt{z^2\shortminus z_0^2}-z\big)L_z(\omega,z)\, dz
=z_0^2\int_{1}^{1/z_0} \!\!\!\big(\sqrt{t^2\shortminus 1}-t\big)L_z(\omega,z_0 t)\, dt\\
=-\frac{z_0^2}{2}\int_{1}^{1/z_0}\!\!\! t^{-1}L_z(\omega,z_0 t)\, dt
+z_0^2\int_{1}^{1/z_0} \!\!\!\big(\sqrt{t^2\shortminus 1}-t+t^{-1}/2\big)L_z(\omega,z_0 t)\,  dt .
\end{multline}
The first integral contributes a log $\ln{\big(1/z_0^2\big)}$
and the second converges as $z_0\to 0$. In fact
\begin{equation}
\int_{1}^{1/z_0} \!\!\!\big(\sqrt{t^2\shortminus 1}-t\!+\!\frac{1}{2t}\big)L_z(\omega,z_0 t)\, dt=
c_2 L_z(\omega,0)+O(z_0^2)
+\int_{1}^{1/z_0} \!\!\!\big(\sqrt{t^2\shortminus 1}-t\!+\!\frac{1}{2t}\big)
\big( L_z(\omega,z_0 t)-L_z(\omega,0)\big)\, dt,
\end{equation}
where
\begin{equation}
c_2=\int_{1}^{\infty} \!\!\!\big(\sqrt{t^2\shortminus 1}-t\!+\!\frac{1}{2t}\big)\, dt=-\frac{\ln{2}}{2}+\frac{1}{4},
\end{equation}
Here we used that the primitive
of $\sqrt{x^2\shortminus 1}$ is $\big(x\sqrt{x^2\shortminus 1}-\ln{\big|x+\sqrt{x^2\shortminus 1}\big|}\big)/2$.
Moreover
\begin{equation}
\int_{1}^{1/z_0} \!\!\!\big(\sqrt{t^2\shortminus 1}-t\!+\!\frac{1}{2t}\big)
\big( L_z(\omega,z_0 t)-L_z(\omega,0)\big)\, dt=O(z_0).
\end{equation}
We have
\begin{equation}
\int_{1}^{1/z_0} \!\!\! t^{-1}L_z(\omega,z_0 t)\, dt
=\frac{1}{2}\ln{(1/z_0^2)}L_z(\omega,0)+\int_{z_0}^{1} \!\!\! z^{-1} \big(L_z(\omega,z)-L_z(\omega,0)\big)\, dz,
\end{equation}
where
\begin{equation}
\int_{z_0}^{1} \!\!\! z^{-1} \big(L_z(\omega,z)-L_z(\omega,0)\big)\, dz
=\int_{0}^{1} \!\!\! z^{-1} \big(L_z(\omega,z)-L_z(\omega,0)\big)\, dz
-L_{zz}(\omega,0)z_0 +O(z_0^2).
\end{equation}

In conclusion, the leading orders in the expansion of $\Psi_{2,0}^{\text{ext}}$ are:
 \begin{multline}
   (r^2-t^2)\Psi_{2,0}^{\text{ext}}[L]=\int_{z_0}^1\frac{L(\omega,z)z\ud z}{\sqrt{z^2-z_0^2}}\sim \int_0^1 L(\omega,z)\ud z+\frac{z_0^2}{4}\ln(z_0^{-2})L_z(\omega,0)\\
   +\frac{z_0^2}{2} \biggl[-L(\omega,1)+\int_0^1\frac{L_z(\omega,z)-L_z(\omega,0)}{z}\ud z +(\ln 2+\frac{1}{2})L_z(\omega,0)\biggr]+\mathcal{O}(z_0^3),
 \end{multline}
 or
  \begin{equation}
   \Psi_{2,0}^{\text{ext}}[L]\sim \frac{1}{2r}\frac{1}{r-t}\int_0^1 L(\omega,z)\ud z+\frac{1}{4 r^2}\ln\frac{2r}{r-t}L_z(\omega,0)
   +\frac{1}{2r^2} \biggl[-L(\omega)+\widetilde{L_z}(\omega) +\frac{1}{2}L_z(\omega,0)\biggr]+\mathcal{O}\Bigl(\tfrac{(r-t)}{r^3}\ln\tfrac{2r}{r-t}\Bigr)\,.
 \end{equation}

\subsection{Inversion of the formula for the homogeneous solution in the exterior}
\label{sec:abel}
We can view $r\Psi_{1,0}^{\text{ext}}[N]$ as the result of the following transformation of the function $N(\omega)$:
\begin{equation}
  \widehat{N}(\omega,z_0)=\int_{z_0}^1 N(\omega,z)(z^2-z_0^2)^{-1/2}\ud z\,.
\end{equation}
Multiplying both sides by $z_0(z_0^2-w^2)^{-1/2}$ and integrating in $z_0$ on $[w,1]$, we obtain
\begin{equation}
  \label{eq:49}
  \int_w^1 \widehat{N}(\omega,z_0)z_0(z_0^2-w^2)^{-1/2}\ud z_0=\int_w^1N(\omega,z)\Bigl[\int_w^z \frac{z_0\ud z_0}{(z^2-z_0^2)^{1/2}(z_0^2-w^2)^{1/2}}\Bigr]\ud z\, ,
\end{equation}
where we interchanged the order of integration.
 If we make a change of variables $z_0^2 =z^2s$ and set $\alpha=(w/z)^2$ the inner integral becomes
\begin{equation}
\int_w^z \frac{z_0\ud z_0}{(z^2-z_0^2)^{1/2}(z_0^2-w^2)^{1/2}}=\frac{1}{2}\int_{\alpha}^1 \frac{\, \ud s}{(1-s)^{1/2}(s-\alpha)^{1/2}}=\frac{1}{2}\int_{\alpha}^1 \frac{\, \ud s}{\sqrt{(\frac{1-\alpha}{2})^2 -( s-\frac{1+\alpha}{2})^2}}\, .
\end{equation}
If we make a further change of variables
$ t=\frac{2}{1-\alpha}\big( s-\frac{1+\alpha}{2}\big)$,
we get
\begin{equation}
\int_w^z \frac{z_0\ud z_0}{(z^2-z_0^2)^{1/2}(z_0^2-w^2)^{1/2}}
=\frac{1}{2}\int_{-1}^1 \frac{\, \ud t}{\sqrt{1-t^2}}=\frac{\pi}{2}.
\end{equation}

 Therefore
\begin{gather}
  N(\omega,z)=-\frac{2}{\pi}\frac{\ud}{\ud w}\Bigl[  \int_w^1 \widehat{N}(\omega,z_0)z_0(z_0^2-w^2)^{-1/2}\ud z_0 \Bigr]\Big|_{w=z},\\
  N(\omega)=N(\omega,1)=-\frac{2}{\pi}\frac{\ud}{\ud w}\Bigl[  \int_w^1 \widehat{N}(\omega,z_0)z_0(z_0^2-w^2)^{-1/2}\ud z_0 \Bigr]\Big|_{w=1}\,.
\end{gather}
This shows that the initial data $N$ can be reconstructed from the homogeneous solution $\Psi_{1,0}^{\text{ext}}[N]$. Indeed, in view of \eqref{eq:phi:Psi10:ext} we have $r\Psi_{1,0}^{\text{ext}}[N](t,r\omega)\!=\!\widehat{N}(\omega,z_0(t,r))$.
This type of argument was used in \cite{H99}.

 \subsection{The Funk transform as a transformation on homogeneous functions \cite{BEGM03}.}
 \label{sec:transform:sphere}

 In our discussion of the asymptotics of homogeneous solutions in the exterior of the light cone we have encountered several transformations of smooth functions on the sphere.
 We will review their basic properties here as well as the relevant statements about their invertibility that are proven in \cite{BEGM03}.

Most importantly  we have the Funk transform which maps a function $M(\omega)$ on the sphere to
\begin{equation}
    \mathcal{F}[M](\omega)= \frac{1}{2\pi}\int_{C(\omega)} M(\sigma)\ud s(\sigma) ,
  \end{equation}
  where $C(\omega)=\{\sigma\in\mathbb{S}^2:\sigma\cdot\omega=0\}$ is the great circle obtained by intersecting the plane orthogonal to $\omega$ with $\mathbb{S}^2$. See \cite{G76} for a modern discussion of the geometric setting where it first appeared \cite{F13}.
  The Funk transform vanishes on odd functions, and $M\mapsto \mathcal{F}[M]$ maps even to even functions on the sphere.

  A related transformation is
\begin{equation}
  \mathcal{G}[N](\omega)= \frac{1}{2\pi}\int_{C(\omega)} \frac{\partial N}{\partial n}(\sigma)\ud s (\sigma)=\frac{1}{2\pi}\int_{C(\omega)} \nang N(\sigma)\cdot \omega\ \ud s(\sigma),
\end{equation}
where  $\partial N/\partial n$ denotes the normal derivative to the great circle $C(\omega)$; see Figure~\ref{fig:funk:normal}.
\begin{figure}
  \centering
  \includegraphics[scale=0.3]{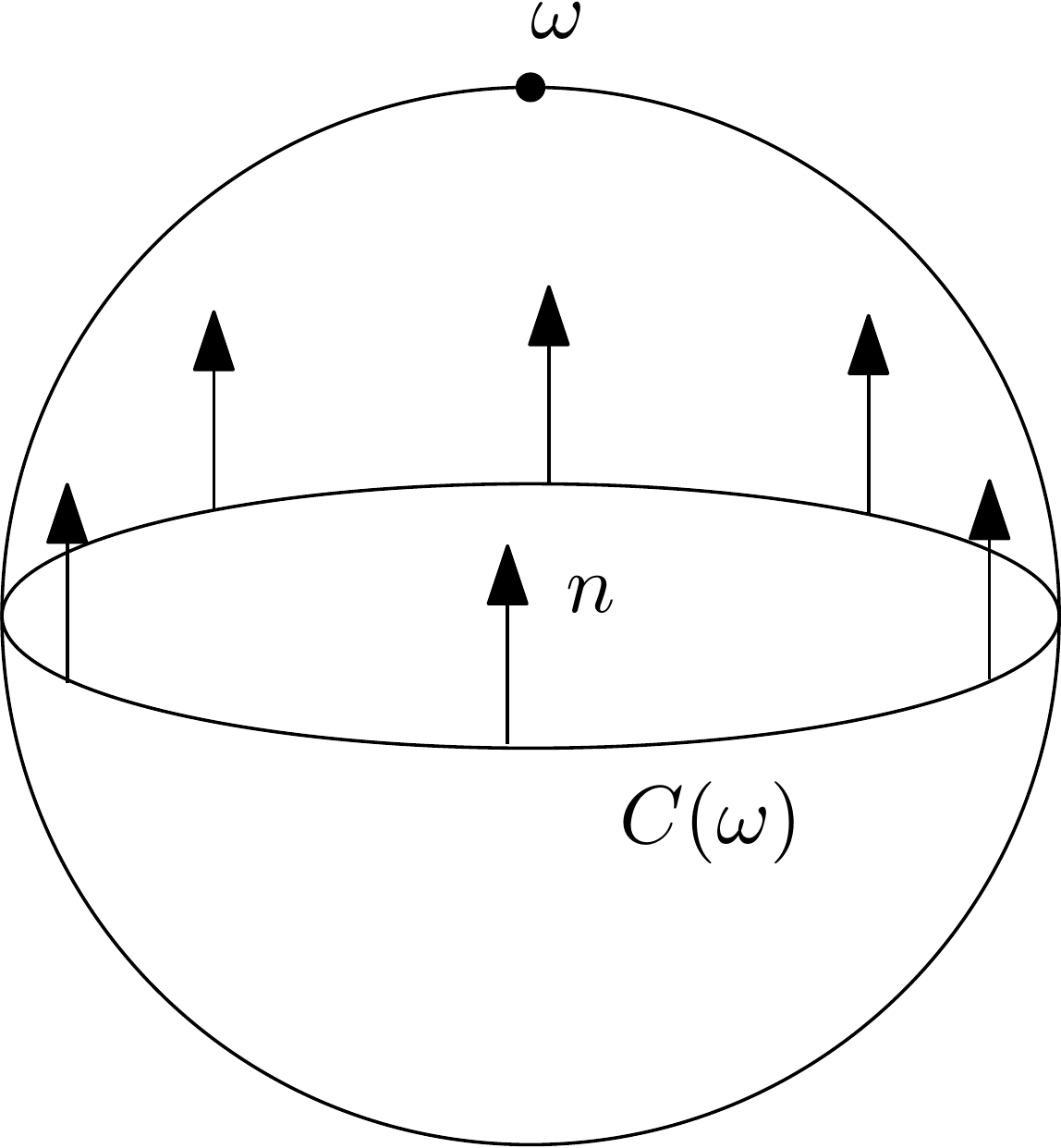}
  \caption{The Funk transform of the normal derivative.}
  \label{fig:funk:normal}
\end{figure}
As opposed to the Funk transform, $\mathcal{G}[M]$ vanishes if $M$ is even. In fact,  if we  choose coordinates so that $\omega_1=1$ and $\omega_2=\omega_3=0$, then
\begin{multline}
 \mathcal{F}[\nang_1 M]( 1,0,0 )
 =\frac{1}{2\pi} \int_{0}^{2\pi} \pa_1 M(0,\cos\theta,\sin\theta) \,d\theta\\
 =\lim_{\varepsilon\to 0} \frac{1}{4\pi\varepsilon}\int_{0}^{2\pi}\Big( M(\varepsilon,\sqrt{1-\varepsilon^2}\cos\theta,\sqrt{1-\varepsilon^2}\sin\theta)
 -M(-\varepsilon,\sqrt{1-\varepsilon^2}\cos\theta,\sqrt{1-\varepsilon^2}\sin\theta)\Big)\,d\theta=0,
\end{multline}
because when $M$ is even,
\begin{multline}
M(-\varepsilon,\sqrt{1-\varepsilon^2}\cos\theta,\sqrt{1-\varepsilon^2}\sin\theta)
=M(\varepsilon,-\sqrt{1-\varepsilon^2}\cos\theta,-\sqrt{1-\varepsilon^2}\sin\theta)\\
=M\big(\varepsilon,\sqrt{1-\varepsilon^2}\cos(\theta+\pi),\sqrt{1-\varepsilon^2}\sin(\theta+\pi)\big).
\end{multline}
Moreover the transformation $M\mapsto\mathcal{G}[M]$ maps odd functions to odd functions on the sphere.
Geometrically it can also be viewed as a Funk transform of the 1-form ${}^\ast\ud M$, see e.g.~\cite{G76,M78}.

In \cite{BEGM03} the Funk transform is viewed as a map that sends even functions on $\mathbb{R}^3\setminus \{0\}$, which are homogeneous of degree $-2$, to even functions which are homogeneous of degree $-1$.
Similarly $\mathcal{G}$ is viewed as a transformation which maps odd functions  on $\mathbb{R}^3\setminus \{0\}$, which are homogeneous of degree $-1$, to odd functions which are homogeneous of degree $-2$.
In general, it is of course always possible to associate to a function on $\mathbb{S}^2$ its  extension to $\mathbb{R}^3\setminus \{0\}$ as a homogeneous function of a given degree.
However, the specific degrees of homogeneity of the transformations studied in \cite{BEGM03} have geometric significance, and are all examples of transformations which map functions on $\mathbb{R}^3\setminus\{0\}$ which are homogeneous of degree $k$ to homogeneous functions of degree $-k-3$.

In \cite{BEGM03} it is proven that $\mathcal{F}$ restricted to even functions, and $\mathcal{G}$ restricted to odd functions, are invertible transformations onto their image.
In fact, the inverse transformations, $\mathcal{T}$ and $\mathcal{S}$,
\begin{equation}
  \mathcal{T}\circ\mathcal{F}=\mathrm{id}\,,\qquad   \mathcal{S}\circ\mathcal{G}=\mathrm{id}\,,
\end{equation}
are given in explicit form.
$\mathcal{T}$ maps even functions on $\mathbb{R}^3\setminus\{0\}$, which are homogeneous of degree $-1$, to even functions which are homogeneous of degree $-2$;
in rectangular coordinates so that $\omega=(1,0,0)$ it is given by
\begin{equation}
  \mathcal{T}[\phi](1,0,0)=\frac{1}{4\pi}\int_{\mathbb{S}^2}\log\Bigl\lvert \frac{x}{y} \Bigr\rvert \frac{\partial^2 \phi}{\partial x^2}\,,\qquad \phi \text { homogeneous degree }-1\,,
\end{equation}
Similarly, $\mathcal{S}$ maps odd to odd functionn on $\mathbb{R}^3\setminus\{0\}$, and is given by
\begin{equation}
      \mathcal{S}[\psi](1,0,0))=-\frac{1}{4\pi}\int_{\mathbb{S}^2}\log\Bigl\lvert \frac{x}{y} \Bigr\rvert \frac{\partial \psi}{\partial x}\,,\qquad \psi \text { homogeneous degree }-2\,.
\end{equation}

For our purposes we need  to write the transformation $\mathcal{S}$ in a form intrinisic to the sphere $\mathbb{S}^2$.
It is shown on page 590 of \cite{BEGM03} that for functions $\psi$  homogeneous of degree $-2$,
 \begin{equation}
     -\frac{1}{4\pi}\int_{\mathbb{S}^2}\log\Bigl\lvert \frac{x}{y} \Bigr\rvert \frac{\partial \psi}{\partial x} = -\frac{1}{8\pi}  \lim_{\epsilon\to 0} \int_{\mathbb{S}^2}\log\Bigl\lvert \frac{x^2+\epsilon^2 y^2}{y^2} \Bigr\rvert \frac{\partial \psi}{\partial x}
     =\frac{1}{8\pi}  \lim_{\epsilon\to 0} \int_{\mathbb{S}^2}  \frac{2x \psi}{x^2+\epsilon^2 y^2}\,.
 \end{equation}
 This formula is related to that in 1 dimension $\frac{\ud}{\ud  x}\log |x|\!=\!\mathrm{Pv}(\frac{1}{x})$, in the sense of distributions, i.e.
 \begin{equation}
     -\int_{\mathbb{R}}\log|x|\frac{\partial \psi}{\partial x} \ud x=\frac{\ud}{\ud x}\log|x|\ [\psi]=\text{pv}(1/x)[\psi]=\lim_{\epsilon\to 0}\int\frac{x \psi(x)}{x^2+\epsilon^2}
     =\lim_{\epsilon\to 0}\int_{|x|\geq \epsilon}\frac{\psi(x)}{x}\ud x=\int\frac{\psi(x)-\psi(0)}{x}\ud x\,.
 \end{equation}
Here we obtain similarly that
 \begin{equation}
   \frac{1}{4\pi}  \lim_{\epsilon\to 0} \int_{\mathbb{S}^2}  \frac{x \psi}{x^2+\epsilon^2 y^2}=\lim_{\epsilon\to 0}\frac{1}{4\pi} \int_{\substack{\mathbb{S}^2,\,\,\,|x|\geq \epsilon}}  \frac{ \psi}{x}\,.
 \end{equation}
If $\psi(x)$ denotes the \emph{average} over the circle obtained by intersecting the plane of constant $x$ with $\mathbb{S}^2$, we get
 \begin{equation}
   \lim_{\epsilon\to 0} \frac{1}{4\pi} \int_{\substack{\mathbb{S}^2,\,\,|x|\geq \epsilon}}  \frac{ \psi}{x} = \lim_{\epsilon\to 0} \frac{1}{2} \int_{|x|\geq \epsilon}\frac{\psi(x)}{x} \ud x=\frac{1}{2}\int_{-1}^1 \frac{\psi(x)-\psi(0)}{x} \ud x\,.
 \end{equation}

This shows that  the quantity $\widetilde{N}$ in the Section~\ref{sec:hom:ext} is in fact $\mathcal{S}[N]$, when $N$ is odd.
Therefore 
\begin{equation}
  \mathcal{S}:  M(\omega)\mapsto \mathcal{S}[M](\omega)=\frac{1}{2}\int_{-1}^1 \frac{M(\omega,z)-M(\omega,0)}{z}\ud z
\end{equation}
is an inverse of the transformation $\mathcal{G}$, so $\mathcal{G}\circ\mathcal{S}=\mathcal{S}\circ\mathcal{G}=\mathrm{id}$.

\subsection{Geometric interpretation and the inverse of the Funk transform \cite{H99}.}
\label{sec:funk:geometric}
Given a function $N(\sigma)$ on the sphere $\mathbb{S}^2$  the Funk transform $\mathcal{F}[N]$ maps any great circle $C$ of $\mathbb{S}^2$ to the integral
\begin{equation}
  \mathcal{F}[N](C)=\int_C N\, ,
\end{equation}
 of $N(\sigma)$ over $C$.
Here the integral is taken with respect to the induced line element on the sphere. This is the analogue of the Radon transform $\mathcal{R}$, which  takes functions $f$ on $\mathbb{R}^3$ to functions $\mathcal{R}[f]$
on the space of planes $P$ in $\mathbb{R}^ 3$, given by  $\mathcal{R}[f](P)=\int_P f$.
The Funk transform vanishes on odd functions, but is invertible on even functions, to which we  henceforth restrict ourselves in this section.

In this section we review the the reconstruction formula for even functions proven in \cite{H99}, which is given in terms of a dual transformation, and its generalisations that we will discuss next.

The dual transformation $\mathcal{F}^\ast$ takes functions $\nu$ on the space of  great circles to functions on the sphere, by taking the average over all great circles passing through a given point $\omega\in\mathbb{S}^2$:
\begin{equation}
  \mathcal{F}^\ast[\nu](\omega)=\int_{\omega\in C}\nu(C) \ud\mu (C)\,.
\end{equation}
A generalisation of the Funk transform of a function $N$
appears in \cite{H99}. For $p>0$,  consider the transformation
\begin{equation}
  \mathcal{F}_p[N](C)=\int_{d(\sigma,C)=p}N(\sigma) \ud s(\sigma) ,
\end{equation}
which is an integral that extends over two circles that are at equal distance $p$ from the great circle $C$; the set $\{\sigma:d(\sigma,C)=p\}$ consists of two disjoint circles of radius $\cos(p)<1$.
For even functions $N$, these are related to  the averages over circles  introduced in \eqref{eq:avg} as follows:
\begin{equation}
  \mathcal{F}_p[N](C_\omega)=2\int_{\langle \omega,\sigma\rangle=\sin(p)}\!\!\!\!\!\!\!\!\!\!\!N(\sigma) \ud s(\sigma)=4\pi \cos q \:N(\omega,z)\,,\quad z=\sin(q)\,,\quad \ C_\omega=\{\sigma\in\mathbb{S}^2:\langle \sigma,\omega\rangle=0\}\,.
\end{equation}
The associated dual transformation is
\begin{equation}
  \mathcal{F}^\ast_p[\nu](\omega)=\int_{d(\omega,C)=p}\nu(C)\ud\mu(C)\,,
\end{equation}
which is the  average taken over all great circles $C$ with distance $p$ from $\omega\in\mathbb{S}^2$.

\begin{prop}[\cite{H99}]
  Let $N(\sigma)$ be an even function on $\mathbb{S}^2$, and let $\mathcal{F}[N]$ be the Funk transform of $N$. Then
  \begin{equation}
    N(\sigma)=\frac{1}{\pi}\frac{\ud}{\ud t}\Bigl[\int_0^t \mathcal{F}^\ast_{\arccos(v)}[\mathcal{F}[N]](\sigma)\frac{v \ud v}{\sqrt{t^2-v^2}}\Bigr]\rvert_{t=1} \,.
  \end{equation}
\end{prop}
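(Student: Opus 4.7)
The strategy is to reduce the statement to an elementary Abel-type identity by first computing the composition $\mathcal{F}^{*}_{p}\!\circ\mathcal{F}$ explicitly in terms of the spherical means of $N$, and then recognising the right-hand side as a smoothed version of the fundamental theorem of calculus.

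The first step is to evaluate
\begin{equation}
\mathcal{F}^{*}_{p}[\mathcal{F}[N]](\omega)=\int_{d(\omega,C)=p}\!\int_{C}N(\sigma)\,\ud s(\sigma)\,\ud\mu(C)
\end{equation}
by interchanging the order of integration. Placing $\omega$ at the north pole, the great circles at distance $p$ from $\omega$ are parameterised by their poles $\tau$ lying on the latitude $\{\langle\tau,\omega\rangle=\sin p\}$ with $\ud\mu$ the normalised arclength, and for $\sigma$ with $z=\langle\sigma,\omega\rangle$ the equation $\langle\sigma,\tau\rangle=0$ on this latitude admits two solutions in $\tau$ if and only if $|z|\leq\cos p$. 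Using the coarea identity $\int_{C_{\tau}}N\,\ud s=\int_{\mathbb{S}^{2}}N(\sigma)\,\delta(\langle\sigma,\tau\rangle)\,\ud S(\sigma)$ and the standard distributional formula $\int\delta(f(\alpha))\,\ud\alpha=\sum_{i}|f'(\alpha_{i})|^{-1}$, one obtains after reducing to the zonal coordinate $z$ and invoking the evenness of $N$ an identity of the form
\begin{equation}
\mathcal{F}^{*}_{p}[\mathcal{F}[N]](\omega)=c\int_{0}^{\cos p}\frac{N(\omega,z)\,\ud z}{\sqrt{\cos^{2}p-z^{2}}},
\end{equation}
with $N(\omega,z)$ the circular average \eqref{eq:avg} and $c$ a universal constant fixed by the normalisations of $\mathcal{F}$ and $\mathcal{F}^{*}_{p}$.

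In the second step, substituting $v=\cos p$ into the bracketed expression in the proposition and swapping the order of integration on the triangle $\{0\leq z\leq v\leq t\}$ gives an inner integral
\begin{equation}
\int_{z}^{t}\frac{v\,\ud v}{\sqrt{v^{2}-z^{2}}\sqrt{t^{2}-v^{2}}}=\frac{\pi}{2},
\end{equation}
evaluated via $u=v^{2}-z^{2}$ followed by $u=(t^{2}-z^{2})\sin^{2}\theta$; remarkably this value is independent of both $z$ and $t$. Consequently the bracketed expression reduces to a constant multiple of $\int_{0}^{t}N(\omega,z)\,\ud z$, and applying $\frac{1}{\pi}\frac{\ud}{\ud t}$ yields (by the fundamental theorem of calculus) a constant multiple of $N(\omega,t)$; evaluating at $t=1$ and using that $N(\omega,1)=N(\omega)$ (the degenerate average over the singleton $\{\omega\}$) completes the argument.

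The main obstacle will be the first step: getting the kernel $(\cos^{2}p-z^{2})^{-1/2}$ with the correct constant $c$ matched to the $1/\pi$ in the proposition requires careful bookkeeping of three ingredients -- the coarea weight on the latitude of poles, the doubling of contributions from both halves $\{z\geq 0\}$ and $\{z\leq 0\}$ arising from the evenness of $N$, and the normalisations of $\mathcal{F}$ and $\mathcal{F}^{*}_{p}$ used in Helgason's convention. Once this kernel identity is in hand the remaining calculation is purely one-dimensional and elementary.
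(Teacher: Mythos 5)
Your proposal is correct, and its second half coincides exactly with the paper's argument: multiply $\mathcal{F}^\ast_{\arccos v}[\mathcal{F}[N]]$ by $v(t^2-v^2)^{-1/2}$, integrate in $v$, swap the order of integration so the inner integral evaluates to $\pi/2$ (the same computation as in Appendix~\ref{sec:abel}), and apply the fundamental theorem of calculus at $t=1$ using $N(\omega,1)=N(\omega)$. Where you genuinely differ from the paper is in how you derive the key kernel identity
\begin{equation}
\mathcal{F}^{*}_{p}[\mathcal{F}[N]](\omega)=2\int_{0}^{\cos p}\frac{N(\omega,z)\,\ud z}{\sqrt{\cos^{2}p-z^{2}}}\,.
\end{equation}
The paper obtains this geometrically: it replaces the average over all great circles at distance $p$ from $\omega$ by the integral over one fixed such circle $C_p$ of the latitude means $(M^{q}N)(\omega)$, and then the spherical Pythagorean relation $\cos q=\cos p\cos r$ together with the symmetry of even $N$ about $r=\pi/2$ produces the Abel kernel, with the factor $2$ appearing transparently. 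You instead parameterize the admissible great circles by their poles $\tau$ on the latitude $\langle\tau,\omega\rangle=\sin p$, insert $\delta(\langle\sigma,\tau\rangle)$, and apply Fubini. This route works: at each of the two roots of $\langle\sigma,\tau(\phi)\rangle=0$ one finds $|\partial_\phi\langle\sigma,\tau\rangle|=\sqrt{\cos^2p-z^2}$, so the kernel and the factor of $2$ (after folding $z\mapsto-z$ by evenness) drop out directly, and the only remaining care is matching the normalization of the measure $\ud\mu$ on great circles to the conventions that fix the $1/\pi$ in the statement --- the same normalization issue the paper's own derivation must address. In short, the paper's derivation of the kernel is shorter and makes the constants geometric, while yours is more mechanical but adapts more readily to the weighted transforms $\mathcal{F}_p$ and to other incidence relations.
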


For the proof,   consider $\mathcal{F}_p^\ast[\mathcal{F}[N]]$: This is the average of $N$
over $\mathbb{S}^2\setminus K$, where $K=K_+\cup K_-$ is the union of two caps,
where $K_\pm=\{\sigma:d(\sigma,\pm\omega)<p\}$.
This average can either be expressed as an integral over great circles at distance $p$ from $\omega$, or alternatively, as an integral over circles a fixed distance from $\omega$, cf.~Fig.~\ref{fig:funk}.
\begin{figure}
  \centering
  \includegraphics[scale=1.2]{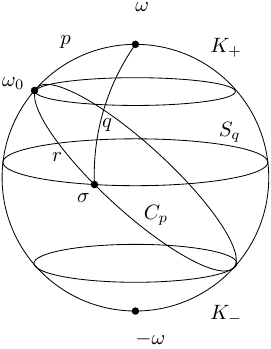}
  \caption{The generalised Funk transform.}
  \label{fig:funk}
\end{figure}
For this purpose, let  $(M^q N)(\omega)$ denote the average of $N$ over the circle $S_q=\{\sigma:d(\sigma,\omega)=q\}$ of distance $q$ from $\omega$.
Now fix any great circle $C_p$ at distance $p$ from $\omega$, then we have
\begin{equation}
  \mathcal{F}^\ast_p[\mathcal{F}[N]](\omega)=\int_{C_p} (M^{d(\omega,\sigma)} N)(\omega)\ud s(\sigma)\,.
\end{equation}
In other words, instead of averaging over all great circles of distance $p$ from $\omega$, we average over the circles $S_q$, parametrized by the points on one fixed great circle $C_p$.
Let $\omega_0$ be the point on $C_p$ closest to $\omega$, so  $d(\omega,\omega_0)=p$, and let $r=d(\omega_0,\sigma)$, and $q=d(\omega,\sigma)$ for $\sigma\in C_p$.
Note that by the spherical pythagoras theorem, 
$\cos(q)\!=\!\cos(p)\cos(r)$, because $C_p$ makes a right angle with the great circle through $\omega$ at $\omega_0$,
and for fixed $p$, we can view $q$ as a function of $r$.
Then for even functions $N$, $(M^qN)(\omega)$ is symmetric around $r\!=\!\pi/2$, and we have
\begin{equation}
  \label{eq:funk:p}
  \mathcal{F}^\ast_p[\mathcal{F}[N]](\omega)=2\int_0^{\pi/2} (M^q N)(\omega)\ud r\,.
\end{equation}
For fixed $\omega$, let us now make the following change of variables: set $v=\cos(p)>0$ and $u=v\cos(r)=\cos(q)>0$, then substituting for $r$ in \eqref{eq:funk:p}, $\ud u=-(v^2-u^2)^{1/2}\ud r$ and denoting by $L(u)=(M^qN)(\omega)$, and $\widehat{L}(v)=\mathcal{F}^\ast_p[\mathcal{F}[N]](\omega)$,
the formula \eqref{eq:funk:p} reads
\begin{equation}
  \widehat{L}(v)=2\int_0^vL(u)(v^2-u^2)^{-1/2}\ud u\,.
\end{equation}
Like the  transformation in the Appendix~\ref{sec:abel}, this transformation is invertible. Since $L(1)\!=\!(M^0N)(\omega)\!=\!N(\omega)$, this gives the reconstruction formula for $N$ in terms of $\widehat{L}$, and hence in terms of the Funk transfrom $\mathcal{F}[N]$.

\end{document}